\documentclass[reqno]{amsart}

\usepackage{graphicx,pdfsync,amssymb, latexsym,amsfonts,amsbsy,color,esint,mathtools,nicefrac,cancel,bbm,tikz, caption, subcaption}
\usepackage[alphabetic,nobysame]{amsrefs}
\usepackage[shortlabels]{enumitem}

\usepackage{parskip}

\usepackage{hyperref}
\hypersetup{
colorlinks=true,
linkcolor=blue,
filecolor=blue,      
urlcolor=blue,
citecolor=blue,
}
\newtheorem{theorem}{Theorem}[section]
\newtheorem{lemma}[theorem]{Lemma}
\newtheorem{proposition}[theorem]{Proposition}
\newtheorem{definition}[theorem]{Definition}

\newtheorem{remark}[theorem]{Remark}

\numberwithin{equation}{section}

\usepackage{times}

\let\div\relax
\DeclareMathOperator\div{div}

\DeclareMathOperator\Id{Id}
\DeclareMathOperator\supp{supp}

\newcommand\newD{\tilde{\mathcal{D}}}

\title[Instantaneous blow-up for MHD]{Instantaneous blowup and non-uniqueness of smooth solutions of MHD}

\author [Mimi Dai]{Mimi Dai}

\address{Department of Mathematics, Statistics and Computer Science, University of Illinois at Chicago, Chicago, IL 60607, USA}
\email{mdai@uic.edu} 

\thanks{The author is partially supported by the NSF grant DMS--2308208 and Simons Foundation. }

\begin{document}

\begin{abstract}
We construct a family of solutions $(u,B)$ of the incompressible magnetohydrodynamic (MHD) system, the $L^\infty$ norm of which blows up instantaneously at the critical rate. The solutions remain smooth except at the blowup time. An inverse energy cascade mechanism and a convex integration scheme along a time sequence are the main ingredients of the construction, inspired by our recent work \cite{CDP} for the Navier-Stokes equations. The challenge of the construction for the MHD system stems from the coupling and the necessity of preserving the same ansatz of the principal solution at every iterative step while implementing convex integration. Existing convex integration schemes for MHD can treat the coupling but fail to produce the same ansatz of the principal solution recursively. 
To achieve the goal, we introduce a coupled geometric lemma that decomposes a symmetric tensor and a skew-symmetric tensor simultaneously. We emphasize that such coupled geometric lemma is new and of independent interest.
\end{abstract}

\maketitle

\begingroup
\setlength{\parskip}{0pt}
\tableofcontents
\endgroup

\section{Introduction}

We consider the incompressible magnetohydrodynamic (MHD) system
\begin{equation}\label{mhd}
\begin{split}
\partial_t u -   \Delta u + \div( u \otimes u-B \otimes B) + \nabla p &= 0, \\
\partial_t B -  \Delta B + \div( u \otimes B-B \otimes u)  &= 0, \\
\div u & =0
\end{split}
\end{equation}
where $u(x,t)$, $B(x,t)$ and $p(x,t)$ represent the velocity field, magnetic field and scalar pressure function respectively,
for $(x,t)\in\mathbb T^d\times [0,T]$ with spatial dimension $d\geq 2$ and some $T>0$. 

We define a weak solution of \eqref{mhd} in the following standard sense.

\begin{definition}\label{def:weak_solutions}
Let $\mathcal{D}_T$ be the space of test functions $\varphi \in C^\infty (\mathbb{T}^d \times \mathbb{R}) $ such that  $\div \varphi=0$ and $\varphi$ vanishes for $t\geq T$.
Let $ u_0 \in L^2(\mathbb{T}^d)$  be weakly divergence-free and $B_0 \in L^2(\mathbb{T}^d)$ . A pair $ (u,B) \in L^2 (\mathbb{T}^d \times [0,T])\times L^2 (\mathbb{T}^d \times [0,T])$ is a weak solution of \eqref{mhd} with initial data $(u_0,B_0)$ if:
\begin{enumerate}[1.,ref=\arabic*,left=1em]
    \item For $a.e.$ $t\in [0,T]$, $u$ is weakly divergence-free;
    
    \item For any $\varphi \in \mathcal{D}_T$,
\begin{equation}\notag
\begin{split}
\int_{\mathbb{T}^d} u_0(x)\cdot \varphi(x,0) \, dx &= - \int_0^T \int_{\mathbb{T}^d} u\cdot \big(  \partial_t \varphi+ \Delta \varphi +  u \cdot \nabla \varphi  \big)-B\cdot (B\cdot\nabla\varphi) \, dx dt,\\ 
\int_{\mathbb{T}^d} B_0(x)\cdot \varphi(x,0) \, dx &= - \int_0^T \int_{\mathbb{T}^d} B\cdot \big(  \partial_t \varphi+ \Delta \varphi +  u \cdot \nabla \varphi  \big)-u\cdot (B\cdot\nabla\varphi) \, dx dt.
\end{split}
\end{equation}
\end{enumerate}
\end{definition}
It is known that weak solutions exist globally in time for any $d\ge2$. Moreover, weak solutions satisfy the mild formulation (cf. \cite{MR316915})
\[
\begin{split}
u(t) &= e^{t\Delta}u_0 - \int_0^t e^{(t-s)\Delta} \mathbb{P}\div(u\otimes u-B\otimes B) (s)\, ds,\\
B(t) &= e^{t\Delta}B_0 - \int_0^t e^{(t-s)\Delta} \mathbb{P}\div(u\otimes B-B\otimes u) (s)\, ds
\end{split}
\]
in the sense of distributions.

On the other hand, a solution $(u,B)$ of \eqref{mhd} is \emph{classical} if it is smooth on $\mathbb T^d\times[0,T]$ in space and time. A unique classical solution from any smooth initial data is known to exist for a short time. The existence of a global in time classical solution remains an open problem for the MHD system in 3D and higher dimension. In this paper we construct solutions of the MHD system that blow up instantaneously and are classical except at the blowup time. Moreover, the blowup exhibits the critical rate according to the scaling of the system which will be described below. 

If $(u,p,B)(x,t)$ is a solution of \eqref{mhd}, the rescaled triple $(u_\lambda,p_\lambda,B_\lambda)(x,t)$ defined by
\[u_\lambda(x,t)=\lambda u(\lambda x,\lambda^2t), \quad p_\lambda(x,t)=\lambda^2 p(\lambda x,\lambda^2t), \quad B_\lambda(x,t)=\lambda B(\lambda x,\lambda^2t)\]
is a solution to the system as well. A few examples of scaling invariant (critical) spaces for $u$ and $B$ with embedding are 
\begin{equation}
\label{eq:critical_spaces}
\dot H^{\frac d2-1}\subset L^d\subset L^{d,\infty}\subset \dot W^{-1, \infty}\subset BMO^{-1} \subset \dot B^{-1}_{\infty,\infty}.
\end{equation}
By the Ladyzhenskaya--Prodi--Serrin type criterion, a solution $(u,B)$ with $u\in L^p_tL^q_x$ for $\frac2p + \frac{d}{q}=1$ and $q>d$ is classical and unique, see \cite{HeXin2005, Zhou2005}. Such spaces are also scaling invariant. The blowup rate of our constructed solutions corresponds to the endpoint Ladyzhenskaya--Prodi--Serrin space $L^2_tL^\infty_x$. 

\subsection{Main results}\label{sec-results}

The following result concerns the instantaneous blowup solution for the MHD system in $\mathbb T^d$ with critical blowup rate.

\begin{theorem} \label{main-thm}
For any $d\geq2$ and $u_0, B_0 \in C^\infty(\mathbb{T}^d)$ with $\div u_0=0$, there exists $T>0$ such that: for any $T_*\in[0,T)$, there exists a weak solution $(u(t), B(t))$ of \eqref{mhd} on $\mathbb T^d\times[0,T]$ satisfying:
\begin{enumerate}[1.,ref=\arabic*,left=1em]
    \item $(u(t), B(t))$ is a classical solution of \eqref{mhd} for $t\in[0,T_*]$ with $(u(0), B(0))=(u_0,B_0)$;\label{blow_up_theorem_classical_1}
    \item $(u(t), B(t))$ is a classical solution of \eqref{mhd} for $t\in(T_*,T]$ such that, for some $c>0$,
    \[
    \|u(t_n)\|_{L^\infty} \geq \frac{c}{\sqrt{t_n-T_*}}, \quad  \|\nabla \times u(t_n)\|_{L^\infty} \geq \frac{c}{t_n-T_*},
    \]
     \[
    \|B(t_n)\|_{L^\infty} \geq \frac{c}{\sqrt{t_n-T_*}}, \quad  \|\nabla \times B(t_n)\|_{L^\infty} \geq \frac{c}{t_n-T_*}
    \]
    along a sequence of times $t_n \to T_*+$;\label{blow_up_theorem_classical_2_with_lower_bound}
    and,
     \[
    \|u(t)\|_{L^\infty}+  \|B(t)\|_{L^\infty} \leq \frac{C}{\sqrt{t-T_*}}, \quad  \|\nabla u(t)\|_{L^\infty}+\|\nabla B(t)\|_{L^\infty} \leq \frac{C}{t-T_*}
    \]
    for all $t\in(T_*,T]$ and some constant $C>0$;
    \item The solution $(u(t), B(t))$ is weak-* continuous in time with values in $BMO^{-1}\times BMO^{-1}$; in addition, we have $u, B\in L_t^\infty \dot W^{-1,\infty}_x$ for $d\geq3$; \label{blow_up_theorem_type_i}
\label{blow_up_theorem_continuity}
   \item Moreover, for all $p<\infty$
\begin{equation}
    u, B\in L^2([0,T];L^p);
\end{equation}
   while for $p=\infty$, it satisfies the logarithmically-weakened conditions of Ladyzhenskaya--Prodi--Serrin type
\begin{equation}\label{logarithmically_weakened_prodi_serrin}
\int_0^T\frac{\|u(t)\|^2_\infty}{1+(\log\log(e+\|u(t)\|_\infty))^c}dt<\infty,
\end{equation}
and of Beale--Kato--Majda type
\begin{equation}\label{logarithmically_weakened_BKM}
\int_0^T\frac{\|\nabla u(t)\|_\infty}{1+(\log\log(e+\|\nabla u(t)\|_\infty))^c}dt<\infty
\end{equation}
for some $c>0$. \label{blow_up_theorem_prodi_serrin}
\end{enumerate}
\end{theorem}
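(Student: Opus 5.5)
We follow the strategy of \cite{CDP} for Navier--Stokes, adapted to the coupled system. Since the problem is invariant under time translation and the classical solution on $[0,T_*]$ comes from local well-posedness, it suffices to construct, for smooth data $(u(T_*),B(T_*))$ at time $T_*$, a solution on $(T_*,T]$ that is classical there and attains these data in the weak sense, with the stated blowup as $t\to T_*^+$. Gluing the two pieces at $T_*$ then gives a weak solution on $[0,T]$, by the mild/weak formulation. After shifting time we may therefore take $T_*=0$.

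On $(0,T]$ we fix a decreasing sequence of times $t_n=\lambda_n^{-2}$, where $\lambda_n=\lambda_0^{b^n}$ (or $\lambda_n$ grows geometrically). We build the solution as a sum $u=\sum_n u_n+\text{corrections}$, $B=\sum_n B_n+\text{corrections}$. Each principal piece $(u_n,B_n)$ is a frequency-$\lambda_n$ profile of the same ansatz at every step, for instance a rescaled shear or Beltrami-type pair $\lambda_n W(\lambda_n x)$ with $|u_n|,|B_n|\sim\lambda_n$, switched on by a time cutoff supported near $t\gtrsim t_n$ and decaying under heat flow afterwards. This scaling gives exactly $\|u(t_n)\|_\infty\sim t_n^{-1/2}$ and $\|\nabla\times u(t_n)\|_\infty\sim t_n^{-1}$, and likewise for $B$.

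The iteration runs backward in frequency, which is the inverse cascade. At stage $n$ we have a solution of an MHD--Reynolds system on $[t_{n+1},T]$ with a symmetric stress $R_n$ in the velocity equation and a skew-symmetric stress $M_n$ in the induction equation. These stresses are generated by the interactions of $(u_n,B_n)$ with the lower modes and by the time cutoff. We then add the next high-frequency pair $(u_{n+1},B_{n+1})$, whose self-interactions $u_{n+1}\otimes u_{n+1}-B_{n+1}\otimes B_{n+1}$ and $u_{n+1}\otimes B_{n+1}-B_{n+1}\otimes u_{n+1}$ average to cancel $R_n$ and $M_n$ simultaneously. The key tool is a coupled geometric lemma: for $(R,M)$ in a neighborhood of a reference pair, find smooth coefficients $a_k(R,M)$ and directions $k\in\Lambda$ with
\[
\sum_k a_k^2\,(\xi_k\otimes\xi_k-\eta_k\otimes\eta_k)=R_0-R,\qquad \sum_k a_k^2(\xi_k\otimes\eta_k-\eta_k\otimes\xi_k)=-M,
\]
where the velocity and magnetic profiles are built from the same amplitudes. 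This forces the new pair to have exactly the fixed ansatz. We prove the lemma by choosing a finite set of vector pairs $(\xi_k,\eta_k)$ whose associated linear map, from $\mathbb R^{|\Lambda|}$ to $\mathrm{Sym}\times\mathrm{Skew}$, is onto and has a strictly positive preimage of the reference pair, and then applying the implicit function theorem.

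The new error $R_{n+1},M_{n+1}$ is estimated in the usual way. The high-high remainder is handled by an inverse divergence, the low-high transport terms carry a gain $\lambda_n/\lambda_{n+1}$, and the Laplacian and time-cutoff terms are controlled since on the support the time scale is $\sim\lambda_{n+1}^{-2}$. The amplitudes must be arranged so that the stress after stage $n$ is of size $\ll\lambda_{n+1}^2$ near $t_{n+1}$, which keeps the profile a small perturbation of the fixed ansatz. Summing over $n$, the pointwise bounds $\|u(t)\|_\infty+\|B(t)\|_\infty\lesssim t^{-1/2}$ and $\|\nabla u(t)\|_\infty+\|\nabla B(t)\|_\infty\lesssim t^{-1}$ follow because only $O(1)$ pieces are active at any given time and the rest are heat-damped. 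The $BMO^{-1}$ and $\dot W^{-1,\infty}$ bounds hold since each piece is $\lambda_n^{-1}\nabla(\cdot)$ of a bounded function. The $L^2_tL^p$ bound and the log-weakened Prodi--Serrin and BKM integrals (with $\|u(t)\|_\infty^2\sim t^{-1}$) become convergent once the amplitudes are allowed a slowly decaying factor, as in \cite{CDP}, which forces super-geometric $\lambda_n$. Weak-* continuity at $0$ comes from the $\lambda_n^{-1}$ gain in negative norms.

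The main obstacle is the coupled geometric lemma together with the compatibility of the ansatz. The magnetic stress must be skew, and the same amplitudes have to produce both the symmetric and skew parts while keeping $\div u_{n+1}=\div B_{n+1}=0$ and the same principal profile at every step. I would first try pairs of the form $(\xi_k,\eta_k)$ with $\eta_k$ orthogonal to $\xi_k$ drawn from lattice directions. If surjectivity onto $\mathrm{Skew}$ fails, I would add velocity-only directions with $\eta_k=0$ to absorb the symmetric part, following earlier MHD convex integration but with fixed signs so that the recursion closes.
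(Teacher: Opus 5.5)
Your overall architecture matches the paper: gluing at $T_*$, an inverse cascade along super-exponentially separated frequencies, and a coupled lemma in which one family of amplitudes yields both the symmetric part (modulo pressure) and the skew part. Your proof of the lemma, via a spanning family with a strictly positive preimage, is essentially that of Lemma \ref{le-geometry-couple}.

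The gap is in how you reach an exact solution. You run a Reynolds-stress iteration in which $(u_{n+1},B_{n+1})$ cancels all of $(R_n,M_n)$, including the stresses from interactions of $(u_n,B_n)$ with lower modes and with the background. But a pair of amplitude and frequency $\lambda_{n+1}$ cannot be kept alive beyond time $\sim\lambda_{n+1}^{-2}$. Holding its amplitude fixed leaves the dissipation error $\Delta u_{n+1}$, whose anti-divergence is a stress as large as $u_{n+1}\otimes u_{n+1}$. So its self-interaction can only cancel stress concentrated in that short window, namely the stress that creates level $n$. In the paper, \eqref{a-convex-integration-identity} decomposes the potential $2\mathcal D\sum_j N_{j,k}\psi_{j,k}$ of the lower principal profile, never an error.

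The remaining error terms all live at times when no higher level is alive, so no later stage removes them and your iteration does not converge to a solution. These are the cross-level products, the cross-direction products within a level, the lower-order and mollification terms, and the coupling with $(U,H)$. The paper instead shows they are subcritical, $\|\nabla^nf_u\|_{C^\kappa}+\|\nabla^nf_B\|_{C^\kappa}\lesssim\varepsilon_0(t^{-1-\frac n2+\alpha}+1)$. This uses the super-exponential growth of $N_{j,k}$ for cross-level terms, and for cross-direction terms disjoint pipe supports in $d\ge3$ and separated frequencies $N_{1,k}\ll\dots\ll N_{J_d,k}$ in $d=2$. It then removes them by a contraction for $(w,\zeta)$ in the weighted spaces $X,Y$, after rescaling by $N_0$ so the background terms are $O(N_0^{-1})$. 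The key input is Proposition \ref{prop-semi}, a bound for MHD linearized around the critically singular field $(U^{1/N_0}+v,H^{1/N_0}+h)$, proved by fractional Gr\"onwall from \eqref{v-critical-bounds}--\eqref{h-critical-bounds}. Its loss $(t/t')^{\epsilon}$ is affordable only because the forcing gains $t^{\alpha}$. Your ``corrections'' would have to be exactly this step, which is not routine since $v$ is not small in any critical norm.

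Second, your route to $u,B\in L^2_tL^p$ is incompatible with the cascade. The low-frequency output of level $n+1$ is quadratic in its amplitude and must reproduce level $n$ at full size. So $|u_n|\sim\epsilon_n\lambda_n$ forces $\epsilon_{n+1}^2\gtrsim\epsilon_n$, and amplitudes stay bounded below as $n\to\infty$ (in the paper $a_{j,k}\sim c^{-1/2}$ for all $k$). With full spatial support, every level then contributes $\gtrsim 1$ to $\|u\|_{L^2_tL^p}^2$.

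The paper gets decay from spatial concentration instead. The blocks carry $2\pi/M_{j,k}$-periodic pipe cutoffs $\varphi_{j,k}$, at a frequency $M_{j,k}$ strictly between the preceding frequency and $N_{j,k}$. They are normalized by $(2\pi)^{-d}\int\varphi_{j,k}^2\sin^2(N_{j,k}\eta^j\cdot x)\,dx=1$, so the averaged self-interaction is unchanged up to the oscillation error $\mathcal J_k^1$. The cutoffs $\chi_k$ give $\supp\psi_{j,k}\subset\Omega_k$ with $|\Omega_k|\le 2^{-k}$. This yields $\|v_k\|_{L^2_tL^p}\lesssim N_{1,k}^{-\alpha}+2^{-k/p}$ while $\|v(t)\|_{L^\infty}$ stays critical.

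Likewise, the log-weakened integrals in item 4 converge because the times at which $\|u(t)\|_\infty\sim t^{-1/2}$ are sparse on a $\log\log$ scale, not through amplitude decay. If $\|u(t)\|_\infty^2\sim t^{-1}$ held for all small $t$, as you write, those integrals would diverge for every $c$.
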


In fact, we can construct an infinite family of such blowup solutions. 
Let $(U,H)$ be any classical solution of the MHD system on $\mathbb T^d\times [0,T]$ for $d\geq 2$ and $T\in(0,1]$. The pair $(U,H)$ can also be viewed as a periodic solution on $\mathbb R^d$. 

\begin{theorem}\label{thm-non-unique}
Fix any $T_*\in[0,T)$. We can construct a one-parameter family $\big((u^{(\sigma)},B^{(\sigma)})\big)_{\sigma\in[0,1]}$ of weak solutions to \eqref{mhd} on $\mathbb R^d\times[0,T]$ such that $(u^{(0)},B^{(0)})=(U,H)$ and, for every $\sigma\in(0,1]$,

     \[(u^{(\sigma)}, B^{(\sigma)})\big|_{[0,T_*]}\equiv (U,H)\big|_{[0,T_*]},\qquad (u^{(\sigma)}, B^{(\sigma)})\big|_{(T_*,T]}\not\equiv (U,H)\big|_{(T_*,T]}\]
   with
    \[u^{(\sigma)}(t), B^{(\sigma)}(t)\in C^\infty(\mathbb R^d),\quad\forall t\in[0,T],\]
    \[
    u^{(\sigma)}, B^{(\sigma)}\in L_t^{2,\infty}L_x^\infty\cap L_{t}^2L_{x,\mathrm{loc}}^p, \quad p<\infty.
    \]
    In addition we have for $d\geq3$
\begin{align*}
        u^{(\sigma)}, B^{(\sigma)}\in L_t^\infty \dot W^{-1,\infty}_x.
    \end{align*}
\end{theorem}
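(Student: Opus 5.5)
This is a plan for Theorem~\ref{thm-non-unique}, following the scheme of \cite{CDP}. Write $(u^{(\sigma)},B^{(\sigma)})=(U,H)+(v,b)$. The perturbation $(v,b)$ is built on a time sequence $t_n\downarrow T_*$, with $t_n-T_*\sim \lambda_n^{-2}$ and frequencies $\lambda_n\to\infty$ growing superexponentially. The leading part of the perturbation is an \emph{inverse energy cascade} ansatz. On each interval $[t_{n+1},t_n]$ the principal part is a sum of rescaled shear/Beltrami-type flows
\[
v_n(x,t)\approx \sigma\lambda_n\, a_n\, W\big(\lambda_n x\big)e^{-\lambda_n^2 (t-T_*)}\cdots,\qquad b_n \text{ of the same form},
\]
with amplitudes of order $\lambda_n$. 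These sizes give $\|v(t)\|_\infty\sim (t-T_*)^{-1/2}$ and $\|\nabla v(t)\|_\infty\sim (t-T_*)^{-1}$. The blowup rates, the $L^{2,\infty}_tL^\infty_x$ bound and the $L^\infty_t\dot W^{-1,\infty}_x$ bound for $d\ge3$ then follow because $v_n=\lambda_n\nabla\times(\text{bounded})$-type potentials have $\dot W^{-1,\infty}$ norm $O(1)$. The $L^2_tL^p_{x,\mathrm{loc}}$ bound comes from spatial intermittency of the building blocks. The profiles are chosen (shear flows $W(\lambda x)=e\,\phi(\lambda\xi\cdot x)$ with $\xi\perp e$) so that $\div(v_n\otimes v_n-b_n\otimes b_n)$ and $\div(v_n\otimes b_n-b_n\otimes v_n)$ vanish exactly. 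Only the heat equation and interactions with lower modes and with $(U,H)$ then produce errors. Setting $\sigma=0$ removes the perturbation, so $(u^{(0)},B^{(0)})=(U,H)$. For $t\le T_*$ nothing is added, which gives the agreement with $(U,H)$ on $[0,T_*]$, and $\sigma>0$ makes the solutions differ immediately after $T_*$.

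The construction is inductive, going backward in frequency. At stage $n$ we have a smooth approximate solution on $[t_{n+1},T]$ with symmetric Reynolds stress $R_n$ and skew-symmetric magnetic stress $M_n$. These are supported near $t_{n+1}$ and have size that is small relative to $\lambda_{n+1}^2$. We add the next principal block $(v_{n+1},b_{n+1})$ together with a convex-integration correction that cancels $(R_n,M_n)$. The key requirement is that the correction again be a superposition of shear flows of the \emph{same ansatz}, so that the inductive structure and the exact cancellation of self-interactions persist. For this we invoke the coupled geometric lemma announced in the abstract. Given a symmetric $R$ close to a multiple of the identity and a small skew $M$, it writes
\[
R=\sum_k \big(a_k^2-c_k^2\big)\,\xi_k\otimes\xi_k,\qquad M=\sum_k a_kc_k\,(\xi_k\otimes\eta_k-\eta_k\otimes\xi_k)
\]
with smooth coefficients and directions drawn from a fixed finite set. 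The velocity and magnetic building blocks then cancel both stresses simultaneously. This requires $\mathbb{P}\div$ of the quadratic interactions to reproduce $R,M$ up to high-frequency errors, which are inverted via an antidivergence with gain $\lambda_{n+1}^{-1}$. The transport and heat errors go into $(R_{n+1},M_{n+1})$.

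Finally we verify convergence. Summed over $n$, the new stresses are $\lesssim \lambda_{n+1}^{-\delta}$ times the critical scale. Smoothness for each fixed $t>T_*$ holds because only finitely many blocks are active away from $T_*$ (the others are exponentially damped by $e^{-\lambda_n^2(t-T_*)}$). Passing to the limit in the weak formulation of Definition~\ref{def:weak_solutions} uses $L^2_tL^2_{x,\mathrm{loc}}$ convergence. The main obstacle is the coupled geometric step. We must decompose $R$ and $M$ simultaneously with blocks whose self-interactions are exactly divergence-free for both equations, while keeping the error estimates uniform in the superexponential frequency ladder. I would first try pairing each direction $\xi_k$ with a partner $\eta_k\perp\xi_k$ and aligned $W,\,\tilde W$ profiles, and treat the $\pm$ Elsässer combinations $v\pm b$ separately to decouple the problem.
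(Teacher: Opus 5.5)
Your proposal has a genuine gap. You invoke an inverse cascade, but your design choices remove it. You choose blocks whose self-interactions $\div(v_n\otimes v_n-b_n\otimes b_n)$ and $\div(v_n\otimes b_n-b_n\otimes v_n)$ vanish exactly, and you ask a correction only to cancel stresses small relative to $\lambda_{n+1}^2$. Then nothing creates $(v_n,b_n)$ out of zero data. That is exactly what is needed for $(u^{(\sigma)},B^{(\sigma)})|_{[0,T_*]}\equiv(U,H)$ and for the gluing at $T_*$ (condition (iii) of Lemma~\ref{le-weak}), and both horns of the dilemma fail:
\begin{itemize}
\item If the heat-decaying blocks $\sigma\lambda_na_nW(\lambda_nx)e^{-\lambda_n^2(t-T_*)}$ are present from $T_*$ on, as your formula suggests, the perturbation tends in $\mathcal D'$ to $\sigma\sum_n\lambda_na_nW(\lambda_nx)\neq0$ as $t\to T_*^+$.
\item If instead you switch $v_n$ on near $t_{n+1}$ with a time cutoff $\chi_n$, the error $\chi_n'(t)v_n$ has an antidivergence of size $\lambda_{n+1}^2\sim(t-T_*)^{-1}$. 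That is full critical size, not small. It can only be cancelled by full-amplitude level-$(n+1)$ blocks whose self-interaction is \emph{not} zero.
\end{itemize}

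That is precisely the paper's mechanism. The fields $v_k,h_k$ in \eqref{prin} are Duhamel integrals, from zero data, of the level-$(k+1)$ quadratic terms. The amplitudes are pinned by \eqref{a-convex-integration-identity}. Since $\int_0^\infty N^2e^{-2N^2s}\,ds=\frac12$, the time-integrated high--high interactions reproduce, modulo pressure, $\mathcal D$ and $\mathcal D_s$ of the level-$k$ potentials, hence $\bar v_k,\bar h_k$ for $t\ge t_k$ (Propositions~\ref{barv-I_estimate_proposition}--\ref{difference_estimate_H}). Because these interactions are of principal size, the $-\bar h\otimes\bar h$ term in the velocity equation cannot be discarded as an error. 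This forces the extra velocity component $\psi_{j,k,c}$ along $\eta_1^j$, carrying the same amplitude $a_{j,k,B}$ as the magnetic block along $\eta_2^j$. The zero-data structure also yields $\|R_k(t)\|_\infty+\|H_k(t)\|_\infty\lesssim tN_{J_d,k+1}^{2+\varepsilon}$. Without it, your count ``each block is $O(1)$ in $\dot W^{-1,\infty}$'' does not sum over $n$.

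There are further problems.
\begin{itemize}
\item Your coupled lemma does not match your blocks. A velocity block polarized along $\xi_k$ (amplitude $a_k$) and a magnetic block along $\eta_k$ (amplitude $c_k$) contribute $a_k^2\xi_k\otimes\xi_k-c_k^2\eta_k\otimes\eta_k$ to the velocity stress, not $(a_k^2-c_k^2)\xi_k\otimes\xi_k$. If both are along $\xi_k$, the skew part vanishes. Lemma~\ref{le-geometry-couple} instead writes $(R-p\Id,G)\in\mathrm{Sym}^3_0\times\mathfrak{so}(3)$ with one positive coefficient per frame on the pair $(\eta_1\otimes\eta_1-\eta_2\otimes\eta_2,\ \eta_1\otimes\eta_2-\eta_2\otimes\eta_1)$. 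It uses eight rational frames spanning this $8$-dimensional space, plus $\pm$ pairing for positivity. Your Elsässer instinct is apt: the pair sums to $(\eta_1-\eta_2)\otimes(\eta_1+\eta_2)$, a $z^-\otimes z^+$ interaction.
\item The residual errors (transport, heat, cross-level terms, interaction with $(U,H)$) are spread over all later times. By then the level-$(n+2)$ blocks have decayed like $e^{-\lambda_{n+2}^2(t-T_*)}$, so these errors cannot be loaded into the next stage's stress. The paper removes them by a contraction for the coupled linearized system around $(U^{1/N_0}+v,H^{1/N_0}+h)$ (Propositions~\ref{prop-semi} and \ref{w-exists-fixed-point-proposition}). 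That argument relies on the logarithmic bounds \eqref{v-critical-bounds}--\eqref{h-critical-bounds}.
\item An amplitude $\sigma$ multiplying every level is incompatible with the quadratic relation between consecutive levels. The paper's family is indexed by the rescaling parameter, $N_0=C/\sigma$.
\end{itemize}
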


Theorem \ref{main-thm} item 2 indicates the instantaneous blowup at the critical rate. Moreover, the lower bounds on $\|\nabla u\|_{L^\infty}$ and $\|\nabla B\|_{L^\infty}$ are consistent with the Beale--Kato--Majda type blowup criteria scaling for MHD. Indeed, it was shown in \cite{CKS1997} that a solution blows up at $T$ if $\int_0^T\|\nabla\times u(t)\|_{L^\infty}\,dt=\infty$ and $\int_0^T\|\nabla\times B(t)\|_{L^\infty}\,dt=\infty$. The result was later improved in \cite{CMZ2007} in the sense that the blowup occurs even if only the velocity satisfies the Beale--Kato--Majda condition. The property stated in item 3 is sharp, since one would not expect standard continuity in time into $BMO^{-1}$ for a blowup solution. In fact it was shown in \cite{WangYuanZhaoZhou} that a solution $(u,B)$ of \eqref{mhd} is classical if $u\in C([0,T]; BMO^{-1})$ with standard continuity. The sharpness of estimates in item 4 is clear in view of the Ladyzhenskaya--Prodi--Serrin condition at the endpoint space $L^2_tL^\infty_x$ proved in \cite{HeXin2005, Zhou2005}.

At the same time, the non-uniqueness stated in Theorem \ref{thm-non-unique} also occurs in borderline spaces. Small data well-posedness of the MHD system \eqref{mhd} in $BMO^{-1}\times BMO^{-1}$ was obtained in \cite{MiaoYuanZhang2007}; well-posedness for a particular class of large initial data in the same space was proved in \cite{Guo2026}. However, the solution is small in both cases since a fixed point argument was used to show the existence of the solution. In contrast, it is noteworthy that our solutions constructed  in $BMO^{-1}\times BMO^{-1}$ are not small, and lose uniqueness.

Our result may also be viewed as mathematically adjacent to the magnetic reconnection phenomenon. Indeed, Theorem \ref{main-thm} produces an isolated-time singular event in which the magnetic field $B$ and $\nabla\times B$ become critically large, corresponding to rapid concentration of current density. This is qualitatively consistent with the reconnection scenarios, namely the formation of intense small-scale magnetic structures. Nevertheless, our theorem does not by itself establish a topological reconnection event, since we do not analyze changes of magnetic field-line connectivity or magnetic flux transport across the singular time. 


\textbf{Organization of the paper.} In the rest of the paper, we introduce notations in Section \ref{sec:notation} and highlight the main ideas of the construction in Section \ref{sec:idea}; building blocks are designed in Section \ref{sec:principal} and the principal part of the solution is constructed in Section \ref{sec:est}; the perturbation corrector is constructed in Section \ref{sec:corrector}; in the end, we prove the main results in Section \ref{sec:proof}.


\medskip

\section{Notations}\label{sec:notation}

Denote $\mathbb T^d$ by the $d$-dimensional torus $(\mathbb R/2\pi\mathbb Z)^d$. We use $|\Omega|$ to represent the volume of $\Omega$ restricted to $\mathbb T^d$. For simplification we write $\|f\|_p$ for $\|f\|_{L^p(\mathbb T^d)}$. The weak $L^p$ space is denoted by $L^{p,\infty}$. 
We also use $\|\cdot\|_{C^\alpha}$ to denote the $\alpha$-H\"older seminorm on $\mathbb R^d$ or $\mathbb T^d$.
We adapt the definition of the $BMO^{-1}$ space and Koch-Tataru space $X_T$ from \cite{KochTataru2001}.

Denote $\hat f$ by the Fourier transform and $\mathcal F^{-1}$ the inverse Fourier transform. We have the heat propagator $$e^{t\Delta}f\coloneqq\mathcal F^{-1}(e^{-t|\xi|^2}\hat f(\xi)),$$ 
and the Leray projection $$\mathbb Pf\coloneqq \mathcal F^{-1}\left(\left(\Id-\frac{\xi\otimes \xi}{|\xi|^2}\right)\hat f(\xi)\right).$$ 
Let $\psi\in C_c^\infty((2/3,3/2))$ be a bump function such that $\sum_{N\in 2^\mathbb N}\psi(r/N)\equiv 1$ for all $r\in[3/4,\infty)$.
We then define the Littlewood--Paley projections
$$P_Nf\coloneqq\mathcal F^{-1}(\psi(|\xi|/N)\hat f(\xi)).$$

Denote $\mathrm{Sym}^{d}$ by the space of symmetric $d\times d$ matrices, $\mathrm{Sym}_0^{d}$ the space of symmetric $d\times d$ traceless matrices, and $\mathfrak{so}(3)$ the $3\times 3$ skew-symmetric matrices. 
For a vector function $f$, we denote $\nabla\odot f$ by the symmetric tensor field with components $\frac12(\partial_if_j+\partial_jf_i)$.

We then define the operators
\begin{align*}
    \mathcal Df&\coloneqq2\nabla\odot f-2(\div f)\Id,\\
    \newD f&\coloneqq2\nabla\odot f-(\div f)\Id,\\
    \mathcal D_sf&\coloneqq \nabla f-(\nabla f)^T+(\div f)\Id 
    \end{align*}
    for a vector field $f$ on $\mathbb T^d$, and additionally
    \begin{align*}
    \mathcal R&\coloneqq \Delta^{-1}\newD,\\
     \mathcal R_s&\coloneqq \Delta^{-1}\mathcal D_s,\\
    \mathbb Q&\coloneqq2\Delta^{-1}\nabla\odot \mathbb P\div,\\
    \mathbb Q_s&\coloneqq \Delta^{-1}(\nabla \mathbb P-(\nabla \mathbb P)^T)\div.
\end{align*}
  We list the following important identities: 
\begin{align*}
    \div \mathcal D=\Delta\mathbb P,
\end{align*}
\begin{align*}
    \div\newD = \Delta,\qquad \newD=\mathbb Q\mathcal D+\left(2\frac{\nabla\otimes\nabla}{\Delta}-\Id\right)\div,
\end{align*}
\begin{equation}\notag
    \div \mathcal D_s=\Delta, \qquad \mathcal D_s=\mathbb Q_s\mathcal D_s+\Id\div,
\end{equation}
\begin{align*}
    \div \mathcal R=P_{\neq0},
\end{align*}
where $P_{\neq0}$ is the projection operator to frequencies $\xi\neq0$, and 
\begin{align}\label{eq:Q_identities}
    \mathbb Q=\mathcal R\mathbb P\div,\qquad \mathbb Q\mathcal D=2\nabla\odot\mathbb P,
\end{align}
\begin{align}\label{eq:Qs_identities}
    \mathbb Q_s=\mathcal R_s\mathbb P\div,\qquad \mathbb Q_s\mathcal D_s=\nabla \mathbb P-(\nabla \mathbb P)^T.
\end{align}
We point out that $\Delta\mathbb P$ is a local differential operator.

\medskip


\section{Main ideas of the construction}\label{sec:idea}

The main mechanism of generating instantaneously blowup is the inverse energy cascade, which was exploited in \cite{CDP} to construct blowup solutions for the Navier-Stokes equations. To realize such inverse energy transfer recursively from high to low modes, we need to implement a convex integration scheme along a sequence of times which are associated with the frequency scales. To ensure the iterative process, it is essential to preserve the ansatz of the principal part of the solution. However, for the coupled MHD system, any existing convex integration scheme cannot preserve the ansatz of the principal profiles. To overcome the obstacle, we introduce a new coupled geometric lemma that decomposes a symmetric tensor and a skew-symmetric tensor simultaneously; thus the convex integration scheme can be designed to fully explore the coupling of the system. 

For comparison on the convex integration part, we refer the reader to \cite{BBV2020, FLS2021, FLS2024, LZZ2022} where non-uniqueness of the MHD systems was constructed in various contexts. In those works, the coupling part in the velocity equation was treated as an error term at low frequency and hence erased together with other low frequency errors. However, in our case, in order to preserve the same ansatz of the velocity at low modes, we cannot treat this coupling term as an error; instead, we need to count on it to produce a portion of the velocity at low modes. This will become more clear in Subsection \ref{sec:outline}.

The coupled geometric Lemma \ref{le-geometry-couple} below is the main novelty of the current construction to ensure that the high-high frequency interactions from the coupling part yield the desired velocity profile at low frequency level.
Notably, this coupled geometric lemma is of independent interest. 



\subsection{Geometric lemmas}\label{sec:geometric}
We start from a standard geometric lemma for symmetric tensor without proof as it has been extensively used in convex integration.
\begin{lemma}[Symmetric geometric lemma]\label{le-geometry1}
Let $B_{\varepsilon_u}(Id)$ be the ball of radius $\varepsilon_u>0$ centered at the identity matrix in the space $\mathrm{Sym}^3$. There exists a finite subset $\Lambda_u\subset S^2\cap \mathbb Q^3$ of vectors $\eta$ associated with orthonormal bases $(\eta, \eta_1,\eta_2)$ such that for any $R\in B_{\varepsilon_u}(Id)$, we have the decomposition 
\begin{equation}\label{decomp1}
R=\sum_{\eta\in \Lambda_u}\Gamma_\eta^2(R) \eta_1\otimes \eta_1
\end{equation}
for smooth functions $\Gamma_\eta: B_{\varepsilon_u}(Id)\to \mathbb R$.
\end{lemma}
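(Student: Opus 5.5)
The plan is the classical argument of Nash and De Lellis--Székelyhidi: write the identity as a positive combination of finitely many rank-one projections onto rational directions, then use the inverse function theorem to extend the decomposition, with positive smooth coefficients, to a neighborhood of $\Id$.

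\textbf{Step 1 (a decomposition of the identity).} Take six pairwise non-parallel unit vectors $\xi_1,\dots,\xi_6\in S^2\cap\mathbb Q^3$ such that the six matrices $\xi_k\otimes\xi_k$ form a basis of $\mathrm{Sym}^3$, which has dimension $6$. A concrete choice is
\[
\tfrac{1}{\sqrt2}(1,\pm1,0),\qquad \tfrac{1}{\sqrt2}(1,0,\pm1),\qquad \tfrac{1}{\sqrt2}(0,1,\pm1),
\]
for which
\[
\Id=\tfrac12\sum_{k=1}^6\xi_k\otimes\xi_k .
\]
Linear independence is a short check: the six tensors reproduce all diagonal entries and, with signs, all off-diagonal entries. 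One point needs care. These vectors are not literally in $\mathbb Q^3$ because of the factor $\sqrt2$. I will handle this either by replacing them with nearby rational points of $S^2$, which are dense via Pythagorean triples, or by using the standard fact that $\xi\otimes\xi$ with $\xi=q/|q|$, $q\in\mathbb Z^3$, has rational entries. Small perturbations preserve both linear independence and the positivity of the coefficients in the identity decomposition.

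\textbf{Step 2 (matching the lemma's format).} For each chosen unit vector, call it $\eta_1$. Complete it to a rational orthonormal basis, using rational rotations (Cayley transform) so that the basis stays in $\mathbb Q^3$, and take $\eta$ to be the first vector of that basis. The set $\Lambda_u$ is then the collection of these $\eta$. In this way the term $\eta_1\otimes\eta_1$ in \eqref{decomp1} is exactly $\xi_k\otimes\xi_k$.

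\textbf{Step 3 (smooth positive coefficients near $\Id$).} Since $\{\xi_k\otimes\xi_k\}$ is a basis, there are linear functionals $L_k$ on $\mathrm{Sym}^3$ with
\[
R=\sum_k L_k(R)\,\xi_k\otimes\xi_k \quad\text{for every } R .
\]
By Step 1, $L_k(\Id)>0$ for all $k$. Choose $\varepsilon_u$ small enough that $L_k(R)\ge \tfrac12 L_k(\Id)$ on $B_{\varepsilon_u}(\Id)$, and set
\[
\Gamma_\eta(R)=\sqrt{L_k(R)} .
\]
This is smooth because its argument stays bounded away from zero.

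The only genuinely delicate point is the rationality in Steps 1--2. I expect to settle it by choosing rational perturbations of the six directions, together with rational orthonormal completions, close enough to the originals that the identity coefficients remain positive.
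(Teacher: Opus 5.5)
The paper states this lemma without proof, calling it standard. Your argument is correct, and it is the same linear-algebra scheme the paper uses for Lemmas \ref{le-geometry2}--\ref{le-geometry-couple}: explicit frames whose tensors form a basis, dual linear functionals, positivity near the base point, then square roots. The one difference is that you need no antipodal pairing $\eta\mapsto-\eta$, because $\Id$ already has strictly positive coordinates $L_k(\Id)=\tfrac12$ in your basis. For the same reason, the inverse function theorem you mention at the start is never used: with exactly $\dim\mathrm{Sym}^3=6$ directions, the coefficients are linear in $R$.

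Two small clean-ups are needed.

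\emph{Rationality.} Drop the alternative based on ``$\xi\otimes\xi$ has rational entries'', since it does not produce $\eta\in\mathbb Q^3$. In fact the lemma only requires $\eta$ to be rational, not $\eta_1=\xi_k$. So you can keep your $\tfrac1{\sqrt2}$ directions and skip the perturbation, choosing a rational unit $\eta\perp\xi_k$ directly and setting $\eta_2=\eta\times\eta_1$. One explicit choice:
for $\xi=\tfrac1{\sqrt2}(1,1,0)$ and $\tfrac1{\sqrt2}(1,-1,0)$, take $\eta=(0,0,1)$ and $(\tfrac23,\tfrac23,\tfrac13)$ respectively;
for $\xi=\tfrac1{\sqrt2}(1,0,1)$ and $\tfrac1{\sqrt2}(1,0,-1)$, take $\eta=(0,1,0)$ and $(\tfrac23,\tfrac13,\tfrac23)$ respectively;
for $\xi=\tfrac1{\sqrt2}(0,1,1)$ and $\tfrac1{\sqrt2}(0,1,-1)$, take $\eta=(1,0,0)$ and $(\tfrac13,\tfrac23,\tfrac23)$ respectively.

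\emph{Distinctness.} Whatever choice you make, the six vectors $\eta$ must be pairwise distinct. $\Lambda_u$ is a set indexing the sum, and each $\eta$ must carry a single frame $(\eta,\eta_1,\eta_2)$. Completing each $\xi_k$ independently, as in your Step 2, does not guarantee this. The choice above is even pairwise non-parallel.
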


We then prove the following geometric lemma for symmetric traceless tensors. Although stated in 3D, it can be generalized to any dimension.
\begin{lemma}[Symmetric geometric lemma for traceless tensors]\label{le-geometry2}
Let $B_{\varepsilon_B}(0)$ be the ball of radius $\varepsilon_B>0$ centered at the 0 matrix in the space $\mathrm{Sym}^3_0$. There exists a finite subset $\Lambda_B\subset S^2\cap \mathbb Q^3$ of vectors $\eta$ associated with orthonormal bases $(\eta, \eta_1,\eta_2)$ such that for any $R\in B_{\varepsilon_B}(0)$, we have the decomposition 
\begin{equation}\label{decomp3}
R=\sum_{\eta\in \Lambda_B}\Gamma_\eta^2(R) (\eta_1\otimes \eta_1-\eta_2\otimes \eta_2)
\end{equation}
for smooth functions $\Gamma_\eta: B_{\varepsilon_B}(0)\to \mathbb R$. Moreover, $\Lambda_B$ can be chosen such that $\Lambda_u\cap \Lambda_B=\emptyset$.
\end{lemma}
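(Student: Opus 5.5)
We plan to follow the standard proof of Lemma \ref{le-geometry1}: find finitely many rank-type matrices that span $\mathrm{Sym}^3_0$ and whose combination with positive coefficients gives $0$, then perturb by the implicit function theorem (or simply by linear algebra). The building blocks are $M_\eta \coloneqq \eta_1\otimes\eta_1-\eta_2\otimes\eta_2$. Each $M_\eta$ is symmetric and traceless, since $(\eta_1,\eta_2)$ is orthonormal. The space $\mathrm{Sym}^3_0$ has dimension $5$.

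\textbf{Step 1: choose the directions.} We pick a finite family of rational orthonormal triples $(\eta,\eta_1,\eta_2)$, using Pythagorean triples to guarantee rationality. For each chosen $\eta$ we also include a partner $\eta'$ whose frame has the roles of $\eta_1,\eta_2$ swapped, so that $M_{\eta'}=-M_\eta$. For instance, we take $\eta'=-\eta$ with frame $(\eta_2,\eta_1)$; this preserves orientation up to a sign, which is irrelevant since orientation is not required.

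We choose five base frames so that the matrices $M_\eta$ span $\mathrm{Sym}^3_0$. Concretely we can use:
\begin{itemize}
\item $e_1\otimes e_1-e_2\otimes e_2$ and $e_2\otimes e_2-e_3\otimes e_3$, realized with $\eta=e_3$ and $\eta=e_1$;
\item the three off-diagonal matrices $e_i\otimes e_j+e_j\otimes e_i$, realized with $\eta=e_k$ and $\eta_1=(e_i+e_j)/\sqrt2$, $\eta_2=(e_i-e_j)/\sqrt2$.
\end{itemize}
The frames in the second item are irrational. To fix this, we replace them with rational rotations such as $(3e_i+4e_j)/5$, $(4e_i-3e_j)/5$. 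The resulting $M$ equals $\frac{-7}{25}(e_i\otimes e_i-e_j\otimes e_j)+\frac{24}{25}(e_i\otimes e_j+e_j\otimes e_i)$, so combined with the diagonal elements it still spans.

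Since the index $\eta$ must label distinct vectors, we use distinct $\eta$'s for the partners. Some coincide (e.g. $e_3$ appears twice), so we may replace one of them by a rational vector close to it, or apply a common small rational rotation $O$ to a duplicated frame. Spanning is an open condition, so the spanning survives a small perturbation, and pairing survives exactly.

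\textbf{Step 2: the decomposition.} Let $\{M_k\}_{k=1}^5$ be the chosen basis and $\{-M_k\}$ the partners. Any $R\in \mathrm{Sym}^3_0$ can be written uniquely as $R=\sum_k c_k(R)M_k$ with $c_k$ linear. We then write
\[
R=\sum_k \Big(\tfrac{1}{2}+\tfrac12 c_k(R)\Big)M_k+\Big(\tfrac12-\tfrac12 c_k(R)\Big)(-M_k),
\]
which holds because the constant terms cancel. For $|R|<\varepsilon_B$ small, all coefficients lie in $(1/4,3/4)$, so we may set $\Gamma$ equal to their square roots. These are smooth because the square root is smooth away from $0$.

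\textbf{Step 3: disjointness from $\Lambda_u$.} $\Lambda_u$ is a fixed finite set and rational points are dense on $S^2$. We therefore conjugate the entire family by a rational orthogonal matrix $O$ (for example a Cayley transform of a rational skew matrix) close to the identity and chosen to avoid the finitely many bad values. Conjugation preserves tracelessness, pairing, and (for $O$ near $\Id$) the spanning property. Since only finitely many $O$ in a one-parameter rational family can map some $\eta$ into $\Lambda_u$, such an $O$ exists, and the same choice also separates any duplicated $\eta$'s.

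The main obstacle is this bookkeeping: keeping all vectors rational, distinct, and disjoint from $\Lambda_u$ while retaining spanning. The openness argument together with the rational Cayley rotations handles it.
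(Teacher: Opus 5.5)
Your proposal is correct and is essentially the paper's proof: five rational frames whose matrices $\eta_1\otimes\eta_1-\eta_2\otimes\eta_2$ form a basis of $\mathrm{Sym}^3_0$, partners $-\eta$ with swapped frame $(\eta_2,\eta_1)$ supplying the negatives, coefficients $\frac12\pm\frac12 c_k(R)$ (the paper uses $1\pm\frac12 r_i(R)$) with square roots, and a small rational perturbation to avoid $\Lambda_u$. The paper takes the non-axis directions $(\frac35,\frac45,0)$, $(\frac35,0,\frac45)$, $(0,\frac35,\frac45)$, so no duplicates arise. One small slip is that the single global rotation $O$ in your Step 3 cannot separate duplicated $\eta$'s, since it maps equal vectors to equal vectors; this is harmless because your Step 1 already separates them by rotating only one duplicated frame together with its partner.
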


\begin{proof}
Choose
\[
\eta^1=(0,0,1),\quad \eta^2=(0,1,0),\quad
\eta^3=\left(\frac35,\frac45,0\right),\quad
\eta^4=\left(\frac35,0,\frac45\right),\quad
\eta^5=\left(0,\frac35,\frac45\right),
\]
and associated orthonormal pairs
\[
(\eta_1^1,\eta_2^1)=\big((1,0,0),(0,1,0)\big),\qquad
(\eta_1^2,\eta_2^2)=\big((1,0,0),(0,0,1)\big),
\]
\[
(\eta_1^3,\eta_2^3)=\left(\left(-\frac45,\frac35,0\right),(0,0,1)\right),\quad
(\eta_1^4,\eta_2^4)=\left(\left(-\frac45,0,\frac35\right),(0,1,0)\right),
\]
\[
(\eta_1^5,\eta_2^5)=\left(\left(0,-\frac45,\frac35\right),(1,0,0)\right).
\]
Set
\[
T_i\coloneqq \eta_1^i\otimes\eta_1^i-\eta_2^i\otimes\eta_2^i\in\mathrm{Sym}^3_0,\qquad i=1,\dots,5.
\]
Then $T_3,T_4,T_5$ have nonzero $(1,2)$, $(1,3)$, $(2,3)$ entries, respectively, while $T_1,T_2$ are diagonal. Hence $T_1,\dots,T_5$ are linearly independent, so they form a basis of $\mathrm{Sym}^3_0$.

Now define $\eta^{i+5}\coloneqq-\eta^i$ and choose
\[
(\eta_1^{i+5},\eta_2^{i+5})\coloneqq(\eta_2^i,\eta_1^i),\qquad i=1,\dots,5.
\]
Then
\[
T_{i+5}\coloneqq \eta_1^{i+5}\otimes\eta_1^{i+5}-\eta_2^{i+5}\otimes\eta_2^{i+5}=-T_i.
\]
Let $\Lambda_B=\{\eta^1,\dots,\eta^{10}\}$ (we discuss disjointness from $\Lambda_u$ at the end).

Given $R\in\mathrm{Sym}^3_0$, write uniquely
\[
R=\sum_{i=1}^5 r_i(R)\,T_i,
\]
where $r_i$ are linear functionals on $\mathrm{Sym}^3_0$. Define
\[
a_i(R)\coloneqq1+\frac12 r_i(R),\qquad a_{i+5}(R)\coloneqq1-\frac12 r_i(R),\qquad i=1,\dots,5.
\]
Then
\[
\sum_{i=1}^{10}a_i(R)T_i
=\sum_{i=1}^5\big(a_i(R)-a_{i+5}(R)\big)T_i
=\sum_{i=1}^5 r_i(R)T_i
=R.
\]
Since $r_i(R)\to0$ as $R\to0$, after shrinking $\varepsilon_B$ if needed we have
$a_i(R)>0$ on $B_{\varepsilon_B}(0)$. Set
\[
\Gamma_{\eta^i}(R)\coloneqq \sqrt{a_i(R)},\qquad i=1,\dots,10.
\]
These functions are smooth on $B_{\varepsilon_B}(0)$, and by construction
\[
R=\sum_{\eta\in\Lambda_B}\Gamma_\eta^2(R)\,(\eta_1\otimes\eta_1-\eta_2\otimes\eta_2).
\]

Finally, because $\Lambda_u$ is finite and $\mathbb S^2\cap\mathbb Q^3$ is dense in $\mathbb S^2$, we can perturb the above rational directions slightly (still rational) to avoid $\Lambda_u$. 
Therefore we may assume $\Lambda_B\cap\Lambda_u=\emptyset$.
\end{proof}

We further show a geometric lemma for skew-symmetric tensors. This lemma was proved in \cite{BBV2020}. We give an independent proof here, which will be used to prove the coupled geometric Lemma \ref{le-geometry-couple}.
\begin{lemma}[Skew-symmetric geometric lemma]\label{le-geometry3}
Let $B_{\varepsilon_B}(0)$ be the ball of radius $\varepsilon_B>0$ centered at the 0 matrix in the space $\mathfrak{so}(3)$. There exists a finite subset $\Lambda_B\subset S^2\cap \mathbb Q^3$ of vectors $\eta$ associated with orthonormal bases $(\eta, \eta_1,\eta_2)$ such that for any $G\in B_{\varepsilon_B}(0)$, we have the decomposition 
\begin{equation}\label{decomp2}
G=\sum_{\eta\in \Lambda_B}\Gamma_\eta^2(G) (\eta_1\otimes \eta_2-\eta_2\otimes \eta_1)
\end{equation}
for smooth functions $\Gamma_\eta: B_{\varepsilon_B}(0)\to \mathbb R$. Moreover, $\Lambda_B$ can be chosen such that $\Lambda_u\cap \Lambda_B=\emptyset$.
\end{lemma}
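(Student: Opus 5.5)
The plan is to mimic the proof of Lemma \ref{le-geometry2}, using the fact that $\mathfrak{so}(3)$ is three-dimensional and that each tensor $\eta_1\otimes\eta_2-\eta_2\otimes\eta_1$ is determined by the axis $\eta$. For an orthonormal frame $(\eta,\eta_1,\eta_2)$ with $\eta_1\times\eta_2=\eta$, the skew matrix $A_\eta\coloneqq\eta_1\otimes\eta_2-\eta_2\otimes\eta_1$ acts as $A_\eta v=(\eta_2\cdot v)\eta_1-(\eta_1\cdot v)\eta_2=-\eta\times v$. So under the standard isomorphism $\mathfrak{so}(3)\cong\mathbb R^3$ it corresponds to $\mp\eta$, depending on the sign convention. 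In particular $A_\eta$ depends only on $\eta$ and the orientation of the pair $(\eta_1,\eta_2)$. Swapping $\eta_1$ and $\eta_2$, or replacing $\eta$ by $-\eta$ with the correspondingly oriented frame, produces $-A_\eta$.

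First I take three rational unit vectors $\eta^1,\eta^2,\eta^3\in S^2\cap\mathbb Q^3$ that are linearly independent, for instance $e_3$, $e_2$ and $(\tfrac35,\tfrac45,0)$. For each one I choose a rational orthonormal completion $(\eta_1^i,\eta_2^i)$, as in the proof of Lemma \ref{le-geometry2}. The tensors $S_i\coloneqq A_{\eta^i}$ then form a basis of $\mathfrak{so}(3)$, because the correspondence $A_\eta\leftrightarrow\mp\eta$ is a linear isomorphism. Next I set $\eta^{i+3}\coloneqq-\eta^i$ with $(\eta_1^{i+3},\eta_2^{i+3})\coloneqq(\eta_2^i,\eta_1^i)$. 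This is again an orthonormal basis together with $\eta^{i+3}$, and it gives $S_{i+3}=-S_i$. Set $\Lambda_B=\{\eta^1,\dots,\eta^6\}$.

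Any $G\in\mathfrak{so}(3)$ can be written uniquely as $G=\sum_{i=1}^3 g_i(G)S_i$ with linear functionals $g_i$. I define $a_i\coloneqq 1+\tfrac12 g_i$ and $a_{i+3}\coloneqq1-\tfrac12 g_i$. Then $\sum_{i=1}^6 a_iS_i=\sum_{i=1}^3 (a_i-a_{i+3})S_i=G$. Shrinking $\varepsilon_B$ makes every $a_i>0$ on $B_{\varepsilon_B}(0)$, so $\Gamma_{\eta^i}\coloneqq\sqrt{a_i}$ is smooth, and \eqref{decomp2} follows.

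The only point needing care is disjointness from $\Lambda_u$, and that rational perturbations keep everything consistent. Being a basis is an open condition, so I can perturb $\eta^1,\eta^2,\eta^3$ within $S^2\cap\mathbb Q^3$, which is dense, so that none of $\pm\eta^i$ lies in the finite set $\Lambda_u$. I also need rational orthonormal completions of the perturbed directions. These exist: for a rational unit $\eta$, the orthogonal plane $\eta^\perp$ is a rational plane, and rational points are dense on its unit circle, for example via stereographic parametrization or by applying a rational rotation (Cayley transform) taking $e_3$ to $\eta$. This step is where I expect the main, though mild, obstacle. If needed, I would simply pick the directions of the form $R e_j$ with $R$ a rational rotation generic enough to avoid $\Lambda_u$, and take the frames $(Re_j,Re_k,Re_l)$. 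All frames are then automatically rational and orthonormal, and the three axes are an orthonormal basis, so the basis property is immediate.
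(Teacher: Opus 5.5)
Your proof is correct and is essentially the paper's argument. You use the same three axes $e_3$, $e_2$, $(\tfrac35,\tfrac45,0)$. You pair each axis with its negative and a swapped frame, so that $S_{i+\cdot}=-S_i$. You then take the same weights $1\pm\tfrac12 g_i$ under a square root.

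There are two minor differences. The paper reuses the full ten-direction set of Lemma \ref{le-geometry2}, putting weight $1$ on the superfluous pairs so that they cancel; you keep only the six directions that are actually needed. Your treatment of the rational perturbation step is also more careful than the paper's one-line remark, since you check that rational orthonormal completions exist (for instance via a generic rational rotation $R$ and frames $(Re_j,Re_k,Re_l)$).
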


\begin{proof}
We use the same set $\Lambda_B$ and the same associated orthonormal frames as in the proof of Lemma \ref{le-geometry2}. In particular, with the notation there, $\Lambda_B=\{\eta^1,\dots,\eta^{10}\}$, $\eta^{i+5}=-\eta^i$, and
\[
(\eta_1^{i+5},\eta_2^{i+5})=(\eta_2^i,\eta_1^i),\qquad i=1,\dots,5.
\]
Define
\[
S_i\coloneqq \eta_1^i\otimes\eta_2^i-\eta_2^i\otimes\eta_1^i\in\mathfrak{so}(3),\qquad i=1,\dots,10.
\]
Then $S_{i+5}=-S_i$ for $i=1,\dots,5$.

Let $\mathcal J:\mathbb R^3\to\mathfrak{so}(3)$ be $\mathcal J(a)b=a\times b$. This is a linear isomorphism. For $i=1,2,3$ there are signs $\sigma_i\in\{\pm1\}$ such that
\[
S_i=\sigma_i\mathcal J(\eta^i).
\]
Since the explicit vectors $\eta^1,\eta^2,\eta^3$ are linearly independent, $S_1,S_2,S_3$ are linearly independent. Hence they form a basis of $\mathfrak{so}(3)$.

Therefore every $G\in\mathfrak{so}(3)$ has a unique representation
\[
G=\sum_{i=1}^3 g_i(G)\,S_i,
\]
where $g_i$ are linear functionals on $\mathfrak{so}(3)$. Define
\[
a_i(G)\coloneqq1+\frac12 g_i(G),\qquad a_{i+5}(G)\coloneqq1-\frac12 g_i(G),\qquad i=1,2,3,
\]
and set
\[
a_4(G)=a_5(G)=a_9(G)=a_{10}(G)=1.
\]
Using $S_{i+5}=-S_i$, we get
\[
\sum_{i=1}^{10}a_i(G)S_i
=\sum_{i=1}^5\big(a_i(G)-a_{i+5}(G)\big)S_i
=\sum_{i=1}^3g_i(G)S_i
=G.
\]
Since each $g_i(G)\to0$ as $G\to0$, after shrinking $\varepsilon_B$ if necessary we have $a_i(G)>0$ on $B_{\varepsilon_B}(0)$. Define
\[
\Gamma_{\eta^i}(G)\coloneqq\sqrt{a_i(G)},\qquad i=1,\dots,10.
\]
Then all $\Gamma_{\eta^i}$ are smooth on $B_{\varepsilon_B}(0)$ and
\[
G=\sum_{\eta\in\Lambda_B}\Gamma_\eta^2(G)\,(\eta_1\otimes\eta_2-\eta_2\otimes\eta_1).
\]

Finally, $\Lambda_B\cap\Lambda_u=\emptyset$ is achieved by the same small rational perturbation argument used in Lemma \ref{le-geometry2}.
\end{proof}


Now we are ready to prove the coupled geometric lemma.
\begin{lemma}[Coupled geometric lemma]\label{le-geometry-couple}
Possibly after enlarging $\Lambda_B$, there exist $\varepsilon_*>0$, smooth functions
\[
\Gamma_\eta:\;B_{\varepsilon_*}(0)\subset \mathrm{Sym}^3\times\mathfrak{so}(3)\to (0,\infty),
\qquad
p:\;B_{\varepsilon_*}(0)\to\mathbb R,
\]
such that for every $(R,G)\in B_{\varepsilon_*}(0)$,
\[
R-p(R,G)\Id
=\sum_{\eta\in\Lambda_B}\Gamma_\eta^2(R,G)\,(\eta_1\otimes\eta_1-\eta_2\otimes\eta_2),
\]
\[
G
=\sum_{\eta\in\Lambda_B}\Gamma_\eta^2(R,G)\,(\eta_1\otimes\eta_2-\eta_2\otimes\eta_1).
\]
Moreover, $\Lambda_B$ can be chosen so that $\Lambda_B\cap\Lambda_u=\emptyset$.
\end{lemma}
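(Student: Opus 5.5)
The plan is to reduce the lemma to a surjectivity statement for a finite-dimensional linear map, and then apply the same "perturb around a positive base point" argument used in the proofs of Lemmas \ref{le-geometry2} and \ref{le-geometry3}. For each $\eta\in\Lambda_B$ with frame $(\eta,\eta_1,\eta_2)$ set $T_\eta=\eta_1\otimes\eta_1-\eta_2\otimes\eta_2\in\mathrm{Sym}^3_0$ and $S_\eta=\eta_1\otimes\eta_2-\eta_2\otimes\eta_1\in\mathfrak{so}(3)$. Define the linear map
\[
L:\mathbb R^{\Lambda_B}\to \mathrm{Sym}^3_0\times\mathfrak{so}(3),\qquad L(a)=\Big(\sum_\eta a_\eta T_\eta,\ \sum_\eta a_\eta S_\eta\Big).
\]
Since every $T_\eta$ is traceless, the pressure is forced: $p(R,G)=\frac13\operatorname{tr}R$, which is linear and hence smooth. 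It then suffices to produce two things:
\begin{enumerate}
\item[(i)] $L$ is onto the $8$-dimensional space $\mathrm{Sym}^3_0\times\mathfrak{so}(3)$;
\item[(ii)] there is a vector $a^0\in\ker L$ with all entries strictly positive.
\end{enumerate}
Given these, fix a linear right inverse $L^{-1}_r$ of $L$ and put $a(R,G)=a^0+L^{-1}_r\big(R-\tfrac13(\operatorname{tr}R)\Id,\,G\big)$. For $\varepsilon_*$ small every entry of $a(R,G)$ stays positive, and we define $\Gamma_\eta=\sqrt{a_\eta}$, which is smooth and valued in $(0,\infty)$. Both identities then hold by construction.

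The difficulty compared with Lemmas \ref{le-geometry2} and \ref{le-geometry3} is that one frame produces $T_\eta$ and $S_\eta$ with rigidly linked coefficients $a_\eta$. With only the pairs $\eta,-\eta$ used there, the swap $(\eta_1,\eta_2)\mapsto(\eta_2,\eta_1)$ sends $(T,S)\mapsto(-T,-S)$, so the symmetric and skew parts cannot be prescribed independently. To decouple them, I will use the four sign variants of each of the five base frames $(\eta^i,\eta^i_1,\eta^i_2)$ from Lemma \ref{le-geometry2}, namely the pairs
\[
(\eta_1,\eta_2),\quad (\eta_2,\eta_1),\quad (\eta_1,-\eta_2),\quad (-\eta_2,\eta_1).
\]
These give $(T,S)$, $(-T,-S)$, $(T,-S)$ and $(-T,S)$ respectively. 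The span of these $20$ pairs is therefore $\operatorname{span}\{T_i\}\times\operatorname{span}\{S_i\}$. This equals all of $\mathrm{Sym}^3_0\times\mathfrak{so}(3)$, because the $T_i$ span $\mathrm{Sym}^3_0$ and $S_1,S_2,S_3$ span $\mathfrak{so}(3)$, as shown in the proofs of Lemmas \ref{le-geometry2} and \ref{le-geometry3}. This gives (i). Taking all weights equal to $1$ gives (ii), since the four variants of each frame cancel.

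The remaining obstacle is that $\Gamma_\eta$ is indexed by the direction $\eta$. The four variants of a base frame share the normal direction $\pm\eta^i$, so two of them would carry the same label, and the directions must also avoid $\Lambda_u$. I plan to resolve this by a perturbation argument. Rational orthonormal frames are dense in $O(3)$, for instance via the Cayley transform with rational skew matrices. So each of the $20$ frames can be replaced by a nearby rational orthonormal frame, in such a way that all $20$ normal directions become pairwise distinct and disjoint from the finite set $\Lambda_u$. This is the "enlarging $\Lambda_B$" permitted in the statement.

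Both (i) and (ii) are stable under small perturbations of the frames:
\begin{enumerate}
\item[(a)] Surjectivity is an open condition, since it amounts to a nonvanishing $8\times8$ minor.
\item[(b)] When the rank is constant, $\ker L$ varies continuously. Hence the orthogonal projection of the all-ones vector onto the perturbed kernel stays close to $(1,\dots,1)$ and therefore remains strictly positive.
\end{enumerate}
The main care lies in this perturbation step. One must check that the perturbation can be chosen small enough to preserve (a) and (b) while making the directions distinct; density of rational frames together with finiteness of all sets involved should make this routine.
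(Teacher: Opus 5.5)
Your proof is correct. It shares the skeleton of the paper's proof: a strictly positive vector in $\ker L$, plus a linear right inverse, followed by square roots. It differs in how you obtain a spanning family of frames with distinct directions.

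\textbf{The paper's route.} The paper writes down eight explicit rational frames whose coupled vectors $W_i=(T(\eta^i),S(\eta^i))$ already form a basis of $V=\mathrm{Sym}^3_0\times\mathfrak{so}(3)$. This is checked by the computation $\det[W_1\cdots W_8]=\frac{64}{81}$. It then adjoins the reversed frames $(-\eta^i;\eta_2^i,\eta_1^i)$, which give $-W_i$. The all-ones vector therefore lies in the kernel, and $b_{i}=1+\frac12\ell_i(X)$, $b_{i+8}=1-\frac12\ell_i(X)$ plays the role of your $a^0+L_r^{-1}(\bar R,G)$. The sixteen directions $\pm\eta^i$ are pairwise distinct from the outset, so perturbation is invoked only to avoid $\Lambda_u$.

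\textbf{Your route.} You decouple $T$ and $S$ using the four sign variants of the five frames of Lemma~\ref{le-geometry2}. Spanning then follows directly from the spanning facts already established in Lemmas~\ref{le-geometry2} and~\ref{le-geometry3}, with no new determinant computation. This also explains conceptually why the rigid coupling of coefficients is no obstruction.

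\textbf{What each buys.} The price of your route is that the variants share normal directions $\pm\eta^i$. You therefore need the additional continuity argument: openness of surjectivity, continuity of $\ker L$ at constant rank to keep the projected all-ones vector positive, and density of rational orthogonal matrices. You also end up with twenty frames known only to exist near explicit ones, rather than the paper's sixteen explicit rational frames. All of these steps are sound as you stated them.
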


\begin{proof}
Let
\[
V\coloneqq \mathrm{Sym}^3_0\times \mathfrak{so}(3),
\]
with
$\dim V=5+3=8$.

For an orthonormal frame $(\eta,\eta_1,\eta_2)$, define
\[
T(\eta)\coloneqq \eta_1\otimes\eta_1-\eta_2\otimes\eta_2\in \mathrm{Sym}^3_0,\qquad
S(\eta)\coloneqq \eta_1\otimes\eta_2-\eta_2\otimes\eta_1\in \mathfrak{so}(3),
\]
and $W(\eta)\coloneqq (T(\eta),S(\eta))\in V$.

Choose the following eight rational orthonormal frames:
\[
\begin{aligned}
&\eta^1=(1,0,0),\quad &&\eta_1^1=(0,1,0),\quad &&\eta_2^1=(0,0,1),\\
&\eta^2=(0,0,-1),\quad &&\eta_1^2=(0,1,0),\quad &&\eta_2^2=(1,0,0),\\
&\eta^3=(0,1,0),\quad &&\eta_1^3=(-1,0,0),\quad &&\eta_2^3=(0,0,1),\\
&\eta^4=\left(\frac23,\frac13,-\frac23\right),\quad
&&\eta_1^4=\left(\frac13,\frac23,\frac23\right),\quad
&&\eta_2^4=\left(\frac23,-\frac23,\frac13\right),\\
&\eta^5=\left(\frac23,\frac23,\frac13\right),\quad
&&\eta_1^5=\left(-\frac23,\frac13,\frac23\right),\quad
&&\eta_2^5=\left(\frac13,-\frac23,\frac23\right),\\
&\eta^6=\left(\frac23,-\frac23,-\frac13\right),\quad
&&\eta_1^6=\left(\frac23,\frac13,\frac23\right),\quad
&&\eta_2^6=\left(-\frac13,-\frac23,\frac23\right),\\
&\eta^7=\left(\frac23,-\frac13,\frac23\right),\quad
&&\eta_1^7=\left(-\frac13,\frac23,\frac23\right),\quad
&&\eta_2^7=\left(-\frac23,-\frac23,\frac13\right),\\
&\eta^8=\left(\frac13,\frac23,-\frac23\right),\quad
&&\eta_1^8=\left(-\frac23,\frac23,\frac13\right),\quad
&&\eta_2^8=\left(\frac23,\frac13,\frac23\right).
\end{aligned}
\]
Set $W_i\coloneqq W(\eta^i)$, $i=1,\dots,8$. In the coordinates
\[
(M_{11},M_{22},M_{12},M_{13},M_{23},N_{12},N_{13},N_{23})
\]
of $V$, a direct computation gives
\[
\det[W_1\,W_2\,\cdots\,W_8]=\frac{64}{81}\neq0.
\]
Hence $W_1,\dots,W_8$ form a basis of $V$.

Now define, for $i=1,\dots,8$,
\[
\eta^{i+8}\coloneqq -\eta^i,\qquad
(\eta_1^{i+8},\eta_2^{i+8})\coloneqq (\eta_2^i,\eta_1^i).
\]
Then
\[
T(\eta^{i+8})=-T(\eta^i),\qquad S(\eta^{i+8})=-S(\eta^i).
\]
Replace $\Lambda_B$ by an enlargement containing $\{\eta^1,\dots,\eta^{16}\}$ and relabel it as $\Lambda_B$.

Given $(R,G)\in \mathrm{Sym}^3\times\mathfrak{so}(3)$, set
\[
p(R,G)\coloneqq \frac13\operatorname{tr}(R),\qquad \bar R\coloneqq R-p(R,G)\Id\in \mathrm{Sym}^3_0,\qquad
X\coloneqq (\bar R,G)\in V.
\]
Since $W_1,\dots,W_8$ is a basis of $V$, there are unique linear functionals
$\ell_i:V\to\mathbb R$ such that
\[
X=\sum_{i=1}^8 \ell_i(X)\,W_i.
\]
Define
\[
b_i(X)\coloneqq 1+\frac12\ell_i(X),\qquad
b_{i+8}(X)\coloneqq 1-\frac12\ell_i(X),\qquad i=1,\dots,8.
\]
Because $\ell_i(0)=0$, after choosing $\varepsilon_*>0$ small enough we have
$b_j(X)>0$ for all $j=1,\dots,16$ whenever $(R,G)\in B_{\varepsilon_*}(0)$.
Set
\[
\Gamma_{\eta^j}(R,G)\coloneqq \sqrt{b_j(X)},\qquad j=1,\dots,16.
\]
Then $\Gamma_{\eta^j}$ are smooth on $B_{\varepsilon_*}(0)$, and
\[
\sum_{j=1}^{16}\Gamma_{\eta^j}^2(R,G)\,W_j
=\sum_{i=1}^8\big(b_i(X)-b_{i+8}(X)\big)W_i
=\sum_{i=1}^8\ell_i(X)W_i
=X.
\]
Taking the symmetric and skew components yields
\[
R-p(R,G)\Id
=\sum_{\eta\in\Lambda_B}\Gamma_\eta^2(R,G)\,(\eta_1\otimes\eta_1-\eta_2\otimes\eta_2),
\]
\[
G
=\sum_{\eta\in\Lambda_B}\Gamma_\eta^2(R,G)\,(\eta_1\otimes\eta_2-\eta_2\otimes\eta_1).
\]

Finally, as in Lemmas \ref{le-geometry2} and \ref{le-geometry3}, since
$\Lambda_u$ is finite and linear independence of $W_1,\dots,W_8$ is an open condition,
we may perturb the rational directions slightly (still rational) so that
$\Lambda_B\cap\Lambda_u=\emptyset$ while preserving the construction.
\end{proof}

\subsection{Heuristics of the construction}\label{sec:outline}

Let $J_d$ be the number of elements in $\Lambda_u\cup \Lambda_B$ and $\mathcal J_d=\{1, 2, ... , J_d\}$.
For $k\in\mathbb N$ and $j\in \mathcal J_d$, the frequency scales $N_{j,k}$ will be chosen such that
\[
\begin{split}
A^{b^k}\sim N_{1,k} \ll N_{2,k} \ll \dots \ll N_{J_d,k} \ll N_{1,k+1} \sim A^{b^{k+1}}, \quad d=2;\\
A^{b^k}\sim N_{1,k} = N_{2,k} = \dots = N_{J_d,k} = N_{1,k+1} \sim A^{b^{k+1}}, \quad d\geq 3.
\end{split}
\]
for a large constant $A\gg 1$ and another constant $b>1$. Accordingly, we choose a decreasing sequence of times $N_{1, k+1}^{-2} \ll t_k \ll N_{J_d,k}^{-2}$, such that energy transfers from the $(k+1)$-th frequency level to $k$-th level over $[t_{k+1},t_k]$.

We then choose the basic building blocks approximately in the form
\[
\begin{split}
\psi_{j,k,u}(x)&\approx N_{j,k}^{-2}a_{j,k,u}(x)\eta^j_1\sin(N_{j,k}\eta^j\cdot x), \quad \eta^j\in \Lambda_u,\\
\psi_{j,k,B}(x)&\approx N_{j,k}^{-2}a_{j,k,B}(x)\eta^j_2\sin(N_{j,k}\eta^j\cdot x), \quad \eta^j\in \Lambda_B,\\
\psi_{j,k,c}(x)&\approx N_{j,k}^{-2}a_{j,k,B}(x)\eta^j_1\sin(N_{j,k}\eta^j\cdot x), \quad \eta^j\in \Lambda_B
\end{split}
\]
where $\{\eta^j,\eta^j_1, \eta^j_2\}$ is an orthonormal basis and $N_{j,k}\eta^j\in\mathbb Z^3$ for all $j\in \mathcal J_3$ and $k\in\mathbb N$. While for $x\in \mathbb T^2$, we take $\eta^j_1=\eta^{j,\perp}$ and $\eta^j_2=(0,0,1)$. The subscript letter ``c"  in $\psi_{j,k,c}$ indicates coupling, since this term is introduced to handle the coupling between the velocity field and the magnetic field. The amplitude functions $a_{j,k,u}$ and $a_{j,k,B}$ will be designed iteratively with the properties that they are bounded and their frequency support is at a much smaller scale compared to $N_{j,k}$.

The principal part $(v,h)$ of the solution will be chosen as
\[
v(x,t) = \sum_k v_k(x,t), \quad h(x,t) = \sum_k h_k(x,t)
\]
with
\[
\begin{split}
v_k(x,t)& \approx -\sum_{j\in \Lambda_u} N_{j,k} e^{-N_{j,k}^2t} \mathbb P\Delta \psi_{j,k,u}(x)-\sum_{j\in \Lambda_B} N_{j,k} e^{-N_{j,k}^2t} \mathbb P\Delta \psi_{j,k,c}(x), \\
h_k(x,t)& \approx -\sum_{j\in \Lambda_B} N_{j,k} e^{-N_{j,k}^2t} \mathbb P\Delta \psi_{j,k,B}(x)
\end{split}
\]
for $t \geq t_k$,
where we abuse the notations for simplicity: $j\in \Lambda_u$ means the summation takes over for all $\eta^j\in \Lambda_u$ and $j\in \Lambda_B$ means the summation takes over for all $\eta^j\in \Lambda_B$.
Neglecting lower order terms, we note
\[-\mathbb P\Delta\psi_{j,k,u}(x)\approx a_{j,k,u}(x)\eta^j_1\sin(N_{j,k}\eta^j\cdot x)=O(1),\]
\[-\mathbb P\Delta\psi_{j,k,B}(x)\approx O(1), \quad -\mathbb P\Delta \psi_{j,k,c}(x)\approx O(1).\]

The choice of $a_{j,k+1,u}$ and $a_{j,k+1,B}$ is to generate low mode component $(v_k, h_k)$ from the nonlinear interactions at $(k+1)$-th level over the time interval $[t_{k+1},t_k]$. Namely, we would like to have
\begin{equation} \label{ansatz-intro}
\begin{split}
v_k(x,t) &\approx -\int_0^{t} e^{(t-s)\Delta }\mathbb P\div ( v_{k+1}\otimes v_{k+1}-h_{k+1}\otimes h_{k+1})(s) \, ds,\\
h_k(x,t) &\approx -\int_0^{t} e^{(t-s)\Delta }\mathbb P\div ( v_{k+1}\otimes h_{k+1}-h_{k+1}\otimes v_{k+1})(s) \, ds.
\end{split}
\end{equation}
When $d=2$, we have the scaling computation 
\begin{equation}\label{scale-separation}
\int_0^{t} N_{j,k} N_{j',k} e^{-N_{j,k}^2 s-N_{j',k}^2 s} \, ds \sim 
\begin{cases}
1, &j=j',\\
N_{j,k}^{-1} N_{j',k} \ll 1, &j > j',
\end{cases}
\end{equation}
for $t\geq t_{k-1}$, which says only the interactions from the same direction are significant. On the other hand, for $d\geq3$, the equation \eqref{scale-separation} with $j=j'$ still holds, and the interactions from different directions can be made small by choosing disjoint supports of the building blocks. Therefore we infer
\[
\begin{split}
&\quad\int_0^t P_{< N_{1,k+1}}\mathbb P\div( v_{k+1}\otimes v_{k+1}-h_{k+1}\otimes h_{k+1}) \, ds \\
&\approx P_{< N_{1, k+1}}\mathbb P\div \sum_{j\in \Lambda_u} \mathbb P\Delta \psi_{j,k+1,u} \otimes \mathbb P\Delta \psi_{j,k+1,u}\\
&\quad+P_{< N_{1, k+1}}\mathbb P\div \sum_{j\in \Lambda_B} \left(\mathbb P\Delta \psi_{j,k+1,c} \otimes \mathbb P\Delta \psi_{j,k+1,c}-\mathbb P\Delta \psi_{j,k+1,B} \otimes \mathbb P\Delta \psi_{j,k+1,B}\right)\\
& \approx \mathbb P\div\sum_{j\in\Lambda_u}a_{j,k+1,u}^2\eta^j_1\otimes\eta^j_1+\mathbb P\div\sum_{j\in\Lambda_B}a_{j,k+1,B}^2(\eta^j_1\otimes\eta^j_1-\eta^j_2\otimes\eta^j_2)
\end{split}
\]
for $t\geq t_k$, where we only keep the leading order terms of the self-interactions in the approximation. Analogously, we have
\[
\begin{split}
&\quad\int_0^t P_{< N_{1,k+1}}\mathbb P\div( v_{k+1}\otimes h_{k+1}-h_{k+1}\otimes v_{k+1}) \, ds \\
&\approx P_{< N_{1, k+1}}\mathbb P\div \sum_{j\in \Lambda_B} \mathbb P\Delta \psi_{j,k+1,c} \otimes \mathbb P\Delta \psi_{j,k+1,B}\\
&\quad-P_{< N_{1, k+1}}\mathbb P\div \sum_{j\in \Lambda_B} \mathbb P\Delta \psi_{j,k+1, B} \otimes \mathbb P\Delta \psi_{j,k+1,c}\\
& \approx \mathbb P\div\sum_{j\in\Lambda_B}a_{j,k+1,B}^2(\eta^j_1\otimes\eta^j_2-\eta^j_2\otimes\eta^j_1).
\end{split}
\]

Choosing the coefficients $a_{j,k,u}$ and $a_{j,k,B}$ so that 
\begin{equation}\label{decomp-heuristic}
\begin{split}
\sum_{j\in \Lambda_u}a_{j,k+1,u}^2\eta^j_1\otimes\eta^j_1 &\approx \mathcal D \sum_{j\in\Lambda_u} N_{j,k}\psi_{j,k,u}+p\Id, \\
\sum_{j\in\Lambda_B}a_{j,k+1,B}^2(\eta^j_1\otimes\eta^j_1-\eta^j_2\otimes\eta^j_2)&\approx \mathcal D \sum_{j\in\Lambda_B}  N_{j,k}\psi_{j,k,c}+p\Id,\\
\sum_{j\in\Lambda_B}a_{j,k+1,B}^2(\eta^j_1\otimes\eta^j_2-\eta^j_2\otimes\eta^j_1) &\approx \mathcal D_s \sum_{j\in\Lambda_B}  N_{j,k}\psi_{j,k,B}
\end{split}
\end{equation}
for some scalar functions $p(x)$ (which may vary from line to line). 
It then follows that
\[
\begin{split}
&\quad\int_0^t P_{< N_{1,k+1}}\mathbb P\div (v_{k+1}\otimes v_{k+1}-h_{k+1}\otimes h_{k+1}) \, ds\\
& \approx \sum_{j\in\Lambda_u} N_{j,k} \Delta \psi_{j,k,u}+\sum_{j\in\Lambda_B} N_{j,k} \Delta \psi_{j,k,c} + \nabla p
\end{split}
\]
and 
\[\int_0^t P_{< N_{1,k+1}}\mathbb P\div( v_{k+1}\otimes h_{k+1}-h_{k+1}\otimes v_{k+1}) \, ds\approx \sum_{j\in\Lambda_B} N_{j,k} \Delta \psi_{j,k,B} \]
for $t\geq t_k$, which justifies \eqref{ansatz-intro}. 

The first decomposition of \eqref{decomp-heuristic} can be achieved using the standard geometric Lemma \ref{le-geometry1}. We note that the coefficient functions $a_{j,k+1,B}$ in the second and third decompositions coincide due to the coupling nature of system \eqref{mhd}. This is exactly the reason we need the coupled geometric Lemma \ref{le-geometry-couple} to ensure the second and third equations of \eqref{decomp-heuristic}.

\begin{remark} In the purely 2D case, that is $x\in \mathbb T^2$ and $u=(u_1(x_1,x_2), u_2(x_1,x_2))$, $B=(B_1(x_1,x_2), B_2(x_1,x_2))$, we can take a stream function $b=b(x_1,x_2)$ such that $B=\nabla^\perp b$. The 2D MHD \eqref{mhd} can be then written as
\begin{equation}\label{mhd-2d}
\begin{split}
\partial_t u -   \Delta u + \div( u \otimes u-B \otimes B) + \nabla p &= 0, \\
\partial_t b -  \Delta b + u\cdot\nabla b &= 0, \\
\div u  =0, \ B&=\nabla^\perp b.
\end{split}
\end{equation}
We note \eqref{mhd-2d} is in a close form to the incrompressible flow system with a passive tracer.
This case is handled in the companion paper \cite{DaiYang2026} of the author and Yang. 
\end{remark}




\medskip

\section{Building blocks}\label{sec:principal}

The choice of frequency scales in Subsection \ref{frequency_scales_section} and the geometry of building blocks in Subsection \ref{sec:geometry} below are adapted from our paper \cite{CDP}. The building blocks were previously employed in \cite{CoiculescuPalasek2025} for the Navier-Stokes equations.
The rest of the scheme is designed to respect the nonlinear coupling and geometry structure of the MHD system \eqref{mhd}. 

\subsection{Frequency scales}\label{frequency_scales_section}


Fix $b>1$ to be relatively large depending on the dimension $d\geq2$. We postpone the definition of $m_*\in\mathbb N$ to a few lines below. Let $A>1$ be sufficiently large depending on $b,d$ and $m_*$.  Now we fix the minimum frequency $N_{1,0}=1$.
For $k\geq 1$ and $j\in\mathcal J_d$, take
\begin{equation}\label{N_definition}
N_{j,k}=\left\{
\begin{aligned}
&m_*A^{\big\lceil b^{k+(j-1)/J_d}\big\rceil}, \quad &d=2,\\
&m_*A^{\big\lceil b^{k}\big\rceil}, \quad &d \geq 3
\end{aligned}
\right.
\end{equation}
with $\lceil \cdot\rceil$ being the ceiling function. We choose $m_*$ such that $m_*\eta^j\in\mathbb Z^d$ for all $\eta^j\in \Lambda_u\cup \Lambda_B$ and $N_{j,k}\eta^j\in\mathbb Z^d$ for all $j$ and $k$.

Note for sufficiently large $A$, we have 
\[N_{j_1,k_1}\ll N_{j_2,k_2} \quad \mbox{if either} \quad k_1<k_2, \quad \mbox{or} \quad k_1=k_2, j_1<j_2.\]

Now we fix $\gamma\in(b^{-1/J_d}, 1)$. 
For integer $k\geq1$ and $j\in\mathcal J_d$, we choose another sequence of frequency scales
\begin{equation}\label{M_definition}
    M_{j,k}=\left\{
\begin{aligned}
&A^{\big\lceil \gamma b^{k}\big\rceil}, \quad &j=1\text{ or }d\geq 3,\\
&A^{\big\lceil \gamma b^{(j-1)/J_d}\big\rceil} M_{1,k}, \quad &j\geq2\text{ and }d=2,
\end{aligned}
\right.
\end{equation}
such that (taking sufficiently large $A$ if needed) for $d=2$,
\begin{equation}\begin{aligned}\label{N_and_M_ordering}
    A^cN_{j-1,k}\leq M_{j,k}\leq A^{-c}N_{j,k}&,\quad 2\leq j\leq J_d,\\
    A^cN_{J_d,k-1}\leq M_{1,k}\leq A^{-c} N_{1,k}&
\end{aligned}\end{equation}
and for $d\geq 3$,
\begin{equation}\begin{aligned}\label{N_and_M_ordering_3d}
   A^cN_{j,k-1}\leq M_{j,k}\leq A^{-c}N_{j,k}&,\quad 1\leq j\leq J_d
\end{aligned}\end{equation}
for a constant $c>0$ depending on $b$ and $\gamma$.

\subsection{Geometry of the building blocks}\label{sec:geometry}

Denote $\mathcal C_{j,k}(\rho)$ by the $2\pi/M_{j,k}$-periodic cylinder with axis in the line $\mathbb R\eta^j$ and 
radius $\rho M_{j,k}^{-1}$. Choose sufficiently small $\delta_0>0$ depending on $d$ such that
\begin{align}\label{pipe_volume_bound}
|\mathcal C_{j,k}(4\delta_0)|\leq1/(10J_d).
\end{align}
When $d\geq3$, it is possible to choose $\delta_0>0$ even smaller so that
\begin{align*}
    \mathcal C_{j_1,k}(4\delta_0)\cap\mathcal C_{j_2,k}(4\delta_0)=\emptyset\qquad\forall j_1\neq j_2.
\end{align*}

Define $\varphi_{j,k}\in C_c^\infty(\mathcal C_{j,k}(\delta_0))$ as a $2\pi/M_{j,k}$-periodic cutoff function for $k\geq1$, satisfying
\[
\begin{split}
\eta^j\cdot\nabla\varphi_{j,k}=0, \qquad \forall j,k, \\
(2\pi)^{-d}\int_{\mathbb T^d}\varphi_{j,k}^2(x)\sin^2(N_{j,k}\eta^j\cdot x) \, dx =1, \qquad \forall j,k,
\end{split}
\]
and 
\begin{equation} \label{eq:varphi_bound}
\|\nabla^n \varphi_{j,k}\|_\infty\lesssim M_{j,k}^{n}, \qquad \forall j,k.
\end{equation}
We further denote for $k\geq 1$
\begin{align*}
    \Omega_k=\bigcap_{k'=1}^k\bigcup_{j\in\mathcal J_d}\mathcal C_{j,k'}((3-2^{-(k-k')})\delta_0),\quad \widetilde\Omega_k=\bigcap_{k'=1}^k\bigcup_{j\in \mathcal J_d}\mathcal C_{j,k'}((3-\frac342^{-(k-k')})\delta_0),
\end{align*}
and $\Omega_0=\widetilde\Omega_0=\mathbb T^d$.

The following lemma was proved in \cite{CDP}.
\begin{lemma}
We have
    \begin{align}
    |\Omega_{k}|\leq2^{-k}|\mathbb T^d|, \qquad k\geq0.\label{Omega_volume_estimate}
\end{align}
There exists a constant $C_0>1$ with the following property: if $Q\subset\mathbb T^d$ is a cube with $\ell(Q)\in [C_0M_{1,k_0}^{-1},2\pi)$, then
\begin{align}\label{cube_intersection_volume_bound}
    |\Omega_{k}\cap Q|\leq2^{-(k-k_0)}|Q|, \qquad\forall k\geq k_0.
\end{align}
Moreover, there exist cutoff functions $\chi_k\in C_c^\infty(\widetilde\Omega_{k-1})$ with $\chi_k\equiv1$ on $\Omega_{k-1}$ satisfying
\begin{align} \label{eq:chi_k_bound}
    \|\nabla^n\chi_k\|_\infty\lesssim_nM_{J_d,k-1}^n.
\end{align}
\end{lemma}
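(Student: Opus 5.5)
The lemma has three parts: the global volume bound \eqref{Omega_volume_estimate}, the local cube bound \eqref{cube_intersection_volume_bound}, and the existence of the cutoffs $\chi_k$. For the first two I would induct on $k$, using that the pipes at level $k$ are periodic at scale $2\pi/M_{j,k}$. This scale is much finer than the scale $M_{J_d,k-1}^{-1}$ on which $\Omega_{k-1}$ varies, since $M_{j,k}\ge A^cN_{J_d,k-1}\gg M_{J_d,k-1}$ by \eqref{N_and_M_ordering} and \eqref{N_and_M_ordering_3d}.

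\textbf{Step 1: one level of pipes.} Write $U_k(\rho)=\bigcup_j\mathcal C_{j,k}(\rho)$. Since the radii in the definition satisfy $(3-2^{-(k-k')})\delta_0\le 3\delta_0\le 4\delta_0$, \eqref{pipe_volume_bound} gives, for each $j$, density $|\mathcal C_{j,k}(4\delta_0)|/|\mathbb T^d|\le 1/(10J_d)$; I read the normalisation this way, as the density of the periodic pipe. Because $\mathcal C_{j,k}$ is $2\pi/M_{j,k}$-periodic, this density also holds in any cube of side $\ell\ge C M_{j,k}^{-1}$, up to a boundary error of relative size $O(1/(\ell M_{j,k}))$. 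Summing over $j$, the density of $U_k(4\delta_0)$ in such a cube is at most $1/10+O(1/(\ell M_{1,k}))\le 1/2$, once $C_0$ is large.

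\textbf{Step 2: induction.} We have $\Omega_k\subset\Omega_{k-1}'\cap U_k(3\delta_0)$, where $\Omega_{k-1}'$ is $\Omega_{k-1}$ with slightly enlarged radii, still below $3\delta_0$. The radius sequence increases in $k-k'$ and is bounded by $3\delta_0$, so the nesting is fine.

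To run the induction, tile $Q$ into subcubes $Q'$ of side $L\in[C_0M_{1,k}^{-1},2C_0M_{1,k}^{-1}]$. On each $Q'$, the set $\Omega_{k-1}'$ is either essentially all or nothing at scale $L$. More precisely, it is a union of pipes from levels $\le k-1$, whose radii $\delta_0M_{j,k-1}^{-1}$ are huge compared to $L$ because $M_{1,k}\gg M_{J_d,k-1}$. So I classify the subcubes $Q'$ by whether they meet $\Omega_{k-1}$. Only those meeting it contribute, and for these Step 1 gives $|U_k\cap Q'|\le\frac12|Q'|$.

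The total volume of the contributing $Q'$ is at most $|\Omega_{k-1}^{+}\cap Q|$, where $\Omega_{k-1}^{+}$ is the $\sqrt d L$-neighbourhood of $\Omega_{k-1}$. This neighbourhood is contained in the set defined with radii enlarged by $2^{-(k-k')-1}\delta_0$, and since $L\ll \delta_0 M_{J_d,k-1}^{-1}$ the enlargement absorbs the neighbourhood. That is exactly why the radii are $3-2^{-(k-k')}$. So I run the induction with the intermediate radii family, where the slack between $3-2^{-m}$ and $3-2^{-m-1}$ is consumed at each step. The induction hypothesis then yields \eqref{cube_intersection_volume_bound}, and $Q=\mathbb T^d$ gives \eqref{Omega_volume_estimate}.

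\textbf{Step 3: cutoffs.} Choose $\chi_k$ as a product, over $k'\le k-1$, of smoothed indicators of $\bigcup_j\mathcal C_{j,k'}(\rho)$. Each indicator transitions between radius $(3-2^{-(k-1-k')})\delta_0$ and $(3-\frac34 2^{-(k-1-k')})\delta_0$. The gap is $\gtrsim 2^{-(k-k')}\delta_0M_{j,k'}^{-1}$, so the derivative bound for level $k'$ is $(2^{k-k'}M_{J_d,k'})^n$.

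The main obstacle is that the factor $2^{k-k'}$ must stay below $M_{J_d,k-1}/M_{J_d,k'}$. This holds because the $M$'s grow double-exponentially, and for $k'=k-1$ the factor is $O(1)$. Hence \eqref{eq:chi_k_bound} follows by the Leibniz rule, with the constant independent of $k$ because the sum over $k'$ is geometric.
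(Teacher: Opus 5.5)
Your Steps 1 and 3 are fine (the density bound for periodic pipes in cubes of side $\gtrsim M_{1,k}^{-1}$, and the product of smoothed indicators with the Leibniz bound). The gap is in the induction of Step 2, which does not close as set up.

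First, a slip. At each level $k'<k$, the radius in $\Omega_k$ is $(3-2^{-(k-k')})\delta_0$. This is strictly larger than the radius $(3-2^{-(k-1-k')})\delta_0$ used in $\Omega_{k-1}$; for instance, at level $k-1$ it is $2.5\delta_0$ versus $2\delta_0$, a difference far larger than $L$. Hence $\Omega_k\not\subset\Omega_{k-1}$ for $k\ge2$. You must therefore keep every subcube meeting $\Omega_{k-1}'$, not only those meeting $\Omega_{k-1}$.

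With that corrected, the set fed to the induction hypothesis is the $\sqrt d L_k$-neighbourhood of $\Omega'_{k-1}$. Here $L_{k''}\in[C_0M_{1,k''}^{-1},2C_0M_{1,k''}^{-1}]$ is the tiling scale used when peeling level $k''$. Since an $r$-neighbourhood of $\mathcal C_{j,k'}(\rho)$ is $\mathcal C_{j,k'}(\rho+rM_{j,k'})$, every peeling step enlarges the radius parameters at all coarser levels again. The step that peels level $k'+1$ alone adds $\sqrt d\,L_{k'+1}M_{j,k'}\ge \sqrt d\,C_0M_{j,k'}M_{1,k'+1}^{-1}$ at level $k'$, a positive amount independent of $k$. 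The room left below $3\delta_0$ at level $k'$ in $\Omega_k$ is only $2^{-(k-k')}\delta_0$, which tends to $0$ as $k\to\infty$. So the radii cannot stay below $3\delta_0$, and no bookkeeping that consumes the slack between $3-2^{-m}$ and $3-2^{-m-1}$ works for fixed $k'$ and large $k$.

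The geometric radii are not what drives the volume bound. They serve the support propagation $(\widetilde\Omega_{k-1}\cap\mathcal C_{j,k}(\delta_0))+B(0,\ell_k)\subset\Omega_k$, which gives $\supp\psi_{j,k}\subset\Omega_k$, and they leave room for $\chi_k$.

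The repair uses the room up to $4\delta_0$, which is exactly why \eqref{pipe_volume_bound} is stated at radius $4\delta_0$. There are two ways to do this.

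\begin{enumerate}
\item Keep your fine-to-coarse order, but prove the cube bound for all radius vectors with $\rho_{k'}\le 4\delta_0-\sum_{k''=k'+1}^{k}\sqrt d\,L_{k''}M_{J_d,k'}$. This sum is $\lesssim C_0M_{J_d,k'}M_{1,k'+1}^{-1}\le C_0A^{-c}\ll\delta_0$ by \eqref{N_and_M_ordering}. Hence $\Omega_k$ is admissible, the enlarged set is admissible at stage $k-1$, and Step 1 is applied to $U_k(4\delta_0)$.
\item Peel the coarsest level instead, which is cleaner. Show that for every $k_0$, every cube with $\ell(Q)\ge C_0M_{1,k_0+1}^{-1}$, and all radii $\le3\delta_0$,
\[
\Big|\bigcap_{k'=k_0+1}^{k}U_{k'}(\rho_{k'})\cap Q\Big|\le2^{-(k-k_0)}|Q|.
\]
Tile $Q$ by subcubes of side $L\sim C_0M_{1,k_0+2}^{-1}\ll\delta_0M_{J_d,k_0+1}^{-1}$. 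Apply the hypothesis, with $k_0+1$ in place of $k_0$, on each subcube, and keep only the subcubes meeting $U_{k_0+1}(\rho_{k_0+1})$. Their union lies in $U_{k_0+1}(\rho_{k_0+1}+\sqrt d\,LM_{J_d,k_0+1})\subset U_{k_0+1}(4\delta_0)$, which has density at most $\frac12$ in $Q$ by your Step 1. Each level is then enlarged exactly once, so nothing accumulates. Both estimates follow by discarding the levels $\le k_0$ of $\Omega_k$, taking $Q=\mathbb T^d$ and $k_0=0$ for the global bound.
\end{enumerate}
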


\subsection{The potential profiles}\label{sec:potential}


Define $\phi_k$ as a standard mollifier at length scale \[\ell_k:=N_{1,k}^{-\frac12}N_{1,k+1}^{-\frac12}.\] 
In view of \eqref{N_definition}, we have $\ell_k \leq  N_{J_d,k}^{-1}$ for $d\geq 3$; it also holds for $d=2$ provided $b$ is large enough. Define the profiles of potential
\begin{align}
\psi_{j,0,u}(x)&\coloneqq N_{j,0}^{-2}a_{j,0,u}(x)\eta_1^j\sin(N_{j,0}\eta^j\cdot x), \qquad \eta_j\in \Lambda_u, \label{def_psi_0}\\
\psi_{j,k,u}(x)&\coloneqq N_{j,k}^{-2}\phi_k*(a_{j,k,u}(x)\varphi_{j,k}(x)\eta_1^j\sin(N_{j,k}\eta^j\cdot x)),\quad \eta_j\in \Lambda_u, k\geq1, \label{def_psi}
\end{align}
\begin{align}
\psi_{j,0,B}(x)&\coloneqq N_{j,0}^{-2}a_{j,0,B}(x)\eta_2^j\sin(N_{j,0}\eta^j\cdot x), \qquad \eta_j\in \Lambda_B, \label{def_psi_0b}\\
\psi_{j,k,B}(x)&\coloneqq N_{j,k}^{-2}\phi_k*(a_{j,k,B}(x)\varphi_{j,k}(x)\eta_2^j\sin(N_{j,k}\eta^j\cdot x)),\quad \eta_j\in \Lambda_B, k\geq1, \label{def_psi_b}
\end{align}
\begin{align}
\psi_{j,0,c}(x)&\coloneqq N_{j,0}^{-2}a_{j,0,B}(x)\eta_1^j\sin(N_{j,0}\eta^j\cdot x), \qquad \eta_j\in \Lambda_B,\label{def_psi_0c}\\
\psi_{j,k,c}(x)&\coloneqq N_{j,k}^{-2}\phi_k*(a_{j,k,B}(x)\varphi_{j,k}(x)\eta_1^j\sin(N_{j,k}\eta^j\cdot x)),\quad \eta_j\in \Lambda_B, k\geq1.\label{def_psi_c}
\end{align}
We will choose the amplitude functions $a_{j,k,u}$ and $a_{j,k,B}$ inductively so that the following identities are satisfied:
\begin{equation}\label{a-convex-integration-identity}
\begin{split}
\sum_{j\in \Lambda_u}a_{j,k+1,u}^2\eta^j_1\otimes\eta^j_1 &=2 \mathcal D \sum_{j\in\Lambda_u} N_{j,k}\psi_{j,k,u}+p\Id, \\
\sum_{j\in\Lambda_B}a_{j,k+1,B}^2(\eta^j_1\otimes\eta^j_1-\eta^j_2\otimes\eta^j_2)&=2 \mathcal D \sum_{j\in\Lambda_B}  N_{j,k}\psi_{j,k,c}+p\Id,\\
\sum_{j\in\Lambda_B}a_{j,k+1,B}^2(\eta^j_1\otimes\eta^j_2-\eta^j_2\otimes\eta^j_1) &=2 \mathcal D_s \sum_{j\in\Lambda_B}  N_{j,k}\psi_{j,k,B}
\end{split}
\end{equation}
for some scalar functions $p(x)$ (different from line to line), and
\begin{equation} \label{eq:a_bounds}
    \|\nabla^na_{j,k,u}\|_\infty+  \|\nabla^na_{j,k,B}\|_\infty\lesssim N_{J_d,k-1}^{n}, \qquad \forall j,k,
\end{equation}
\begin{equation}\label{a-support}
    \supp a_{j,k,u}\subset \widetilde\Omega_{k-1}, \qquad \supp a_{j,k,B}\subset \widetilde\Omega_{k-1}.
\end{equation}

We remark that, from now on, we also interpret 
\[a_{j,k}=a_{j,k,u} \qquad \mbox{for}\quad j\in \Lambda_u; \qquad a_{j,k}=a_{j,k,B} \qquad \mbox{for}\quad j\in \Lambda_B\]
and similarly 
\[\psi_{j,k}=\psi_{j,k,u} \qquad \mbox{for}\quad j\in \Lambda_u; \qquad \psi_{j,k}=\psi_{j,k,B} \qquad \mbox{for}\quad j\in \Lambda_B,\]
while $\psi_{j,k}=\psi_{j,k,c}$ when we refer to the term from coupling.

The next lemma shows that one can indeed construct amplitude functions $a_{j,k,u}$ and $a_{j,k,B}$ with the required properties.

\begin{lemma}
    For $j\in\mathcal J_d$ and $k\geq0$, there exist coefficient functions $a_{j,k,u}, a_{j,k,B}\in C^\infty(\mathbb T^d;\mathbb R)$ satisfying \eqref{a-convex-integration-identity} and \eqref{eq:a_bounds}. Moreover, $\supp\psi_{j,k}\subset \Omega_k$ and
    \begin{align}\label{eq:Dpsi_bounds}
        \|\nabla^n\psi_{j,k}\|_{L^\infty}\lesssim_n N_{j,k}^{-2+n}
    \end{align}
    for $n\geq0$.
\end{lemma}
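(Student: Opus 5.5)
We prove this by induction on $k$, constructing the amplitudes level by level and reading off the identities \eqref{a-convex-integration-identity} from Lemma \ref{le-geometry1} (velocity part) and the coupled Lemma \ref{le-geometry-couple} (magnetic and coupling parts).

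\textbf{Base case $k=0$.} Take constant amplitudes $a_{j,0,u}, a_{j,0,B}$, for instance small constants, or the values $\Gamma_\eta(0)$ normalized suitably. Since $N_{j,0}=1$, the bounds \eqref{eq:a_bounds} and \eqref{eq:Dpsi_bounds} for $k=0$ are immediate, and $\Omega_0=\mathbb T^d$ makes the support claims trivial.

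\textbf{Inductive step.} Suppose $a_{j,k,u}, a_{j,k,B}$ have been constructed. Form the right-hand tensors
\[
R_u\coloneqq 2\mathcal D\sum_{j\in\Lambda_u}N_{j,k}\psi_{j,k,u},\qquad R\coloneqq 2\mathcal D\sum_{j\in\Lambda_B}N_{j,k}\psi_{j,k,c},\qquad G\coloneqq 2\mathcal D_s\sum_{j\in\Lambda_B}N_{j,k}\psi_{j,k,B}.
\]
Here $R_u, R\in\mathrm{Sym}^3$, and $G$ is skew-symmetric up to the trace term $(\div f)\Id$ in $\mathcal D_s$. That trace term has to be handled: either absorb it into the symmetric part, or arrange $\div\psi_{j,k,B}\approx 0$. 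The latter works because $\eta_2^j\perp\eta^j$ and $\eta^j\cdot\nabla\varphi_{j,k}=0$, so the divergence is lower order, and one can also redefine the potential to be exactly divergence-free.

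By \eqref{eq:Dpsi_bounds} at level $k$ we have $\|R_u\|_\infty,\|R\|_\infty,\|G\|_\infty\lesssim N_{j,k}\cdot N_{j,k}^{-1}\sim 1$. This is not small, so we first rescale. Write $R_u=\lambda\Id-(\lambda\Id-R_u)$ with a large constant $\lambda$, so that $\lambda^{-1}R_u$ lies in $B_{\varepsilon_u}(\Id)$ after absorbing the $-\lambda\Id$ into the pressure $p$. Likewise divide $(R,G)$ by $\lambda$ so that $\lambda^{-1}(R,G)\in B_{\varepsilon_*}(0)$; the $p(R,G)\Id$ term of Lemma \ref{le-geometry-couple} also goes into the pressure. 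Then set
\[
a_{j,k+1,u}\coloneqq\sqrt\lambda\,\chi_{k+1}\Gamma_{\eta^j}\bigl(\lambda^{-1}R_u\bigr),\qquad a_{j,k+1,B}\coloneqq\sqrt\lambda\,\chi_{k+1}\Gamma_{\eta^j}\bigl(\lambda^{-1}(R,G)\bigr).
\]

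\textbf{The cutoff and supports.} The cutoff $\chi_{k+1}$ is needed because the identity is only required modulo $p\Id$. Since $\supp\psi_{j,k}\subset\Omega_k$ and $\chi_{k+1}\equiv1$ on $\Omega_k$, the identities \eqref{a-convex-integration-identity} hold exactly on $\Omega_k$. Outside $\Omega_k$ the right-hand side equals $-\lambda\Id$, which is a pressure, so the cutoff introduces no error. In the magnetic case the right-hand side is $0$ outside $\Omega_k$, which the zero amplitude matches exactly. This also gives $\supp a_{j,k+1}\subset\widetilde\Omega_k$, which is \eqref{a-support}. The point is that the vanishing of the right-hand side outside $\Omega_k$ is precisely what makes the cutoff harmless.

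\textbf{Derivative bounds.} We get \eqref{eq:a_bounds} at level $k+1$ from the chain rule (Fa\`a di Bruno), using two ingredients: the bound $\|\nabla^n R\|_\infty\lesssim N_{J_d,k}^n$, which follows from \eqref{eq:Dpsi_bounds}, and the bound $\|\nabla^n\chi_{k+1}\|_\infty\lesssim M_{J_d,k}^n\le N_{J_d,k}^n$ from \eqref{eq:chi_k_bound}.

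\textbf{Properties of $\psi_{j,k+1}$.} Finally we verify the claims about $\psi_{j,k+1}$. For the support, $\varphi_{j,k+1}$ is supported in $\mathcal C_{j,k+1}(\delta_0)$ and $a$ in $\widetilde\Omega_k$. Mollification at scale $\ell_{k+1}\ll M^{-1}$ enlarges supports only slightly, and the gap between the radii $(3-\tfrac34 2^{-\cdot})\delta_0$ and $(3-2^{-\cdot})\delta_0$ absorbs this enlargement, giving $\supp\psi_{j,k+1}\subset\Omega_{k+1}$. For \eqref{eq:Dpsi_bounds}, derivatives fall on $\sin(N_{j,k+1}\eta^j\cdot x)$ at cost $N_{j,k+1}$, on $\varphi$ at cost $M_{j,k+1}$, or on $a$ at cost $N_{J_d,k}$. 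Both $M_{j,k+1}$ and $N_{J_d,k}$ are $\le N_{j,k+1}$ by \eqref{N_and_M_ordering}, and mollification does not increase any of these norms.

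\textbf{Main obstacle.} The delicate point is the skew part. We must ensure that $G$ genuinely lies in $\mathfrak{so}(3)$, which is where the trace term of $\mathcal D_s$ and the divergence of $\psi_{j,k,B}$ come in. We must also ensure that the single amplitude $a_{j,k+1,B}$ serves both the symmetric and the skew identity simultaneously, which is exactly what Lemma \ref{le-geometry-couple} provides.
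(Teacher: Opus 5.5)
Your construction of the amplitudes is the paper's with $c=\lambda^{-1}$: $a_{j,k+1,u}=c^{-1/2}\chi_{k+1}\Gamma_j(\Id+cS_{k,u})$ and $a_{j,k+1,B}=c^{-1/2}\chi_{k+1}\Theta_j(cS^{\mathrm{sym}}_{k,B},cS^{\mathrm{skw}}_{k,B})$. Your velocity formula needs the argument $\Id+\lambda^{-1}R_u$, since $\Gamma_\eta$ is only defined on $B_{\varepsilon_u}(\Id)$; this gives $\sum_j a_{j,k+1,u}^2\eta_1^j\otimes\eta_1^j=R_u+\lambda\chi_{k+1}^2\Id$. Your support and cutoff argument is also the paper's.

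The gap is in the derivative bounds. The constants in \eqref{eq:a_bounds} and \eqref{eq:Dpsi_bounds} must be independent of $k$. The smallness $\lambda^{-1}\|R\|_\infty\le\varepsilon$ for one $\lambda$ at every level depends on this too; note $\|R\|_\infty\lesssim\sqrt\lambda$, not $1$, because the amplitudes have size $\sqrt\lambda$. Your direct induction loses one derivative per level. The bound on $\nabla^n a_{j,k+1}$ involves $\nabla^n R$, hence $\nabla^{n+1}\psi_{j,k}$. Placing derivatives on $a_{j,k}\varphi_{j,k}$, as you do, this involves $\nabla^{n+1}a_{j,k}$. Its constant was produced from $\nabla^{n+2}a_{j,k-1}$, and so on down to level $1$. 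So the constant at level $k$ depends on derivatives of order about $n+k$ at the bottom, compounded by Fa\`a di Bruno at each stage. Your argument gives no control on this accumulation, and you do not show that the gain $N_{J_d,k-1}/N_{j,k}\le A^{-c}$ absorbs it with one fixed $A$.

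The missing idea is the role of the mollifier $\phi_k$, which you treat as a harmless nuisance. The paper first proves a rough bound by putting all derivatives on $\phi_k$:
$\|\nabla^n\psi_{j,k}\|_\infty\lesssim N_{j,k}^{-2}\ell_k^{-n}\|a_{j,k}\|_\infty\lesssim N_{j,k}^{-2}\ell_k^{-n}c^{-1/2}$.
This uses only the sup norm of $a_{j,k}$, which is uniformly bounded because $\Gamma_j,\Theta_j$ are bounded. Hence $\|\nabla^n S_k\|_\infty\lesssim N_{1,k}^{-1}\ell_k^{-n-1}c^{-1/2}$. By Fa\`a di Bruno, $\|\nabla^n a_{j,k+1}\|_\infty\lesssim N_{1,k}^{-n}\ell_k^{-2n}=N_{1,k+1}^n$, with $k$-independent constants. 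This is exactly why $\ell_k=(N_{1,k}N_{1,k+1})^{-1/2}$.

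This rough bound is far from \eqref{eq:a_bounds}, but it already gives the sharp bound $\|\nabla^n\psi_{j,k+1}\|_\infty\lesssim c^{-1/2}N_{j,k+1}^{-2+n}$. Here the derivatives are placed inside the mollifier, as in your count, and one uses $N_{1,k+1}\le N_{j,k+1}$. That in turn yields:
\begin{itemize}
\item $\|\nabla^nS_{k+1}\|_\infty\lesssim c^{-1/2}N_{J_d,k+1}^n$;
\item the smallness $c\|S_{k+1}\|_\infty\lesssim c^{1/2}$;
\item the sharp \eqref{eq:a_bounds} at level $k+2$.
\end{itemize}
Every constant is then uniform in $k$, and no derivative is lost across levels.

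Your remark on the trace term of $\mathcal D_s$ is correct, and the paper glosses over it. It applies Lemma \ref{le-geometry-couple} to $cS^{\mathrm{skw}}_{k,B}$, which is not in $\mathfrak{so}(3)$, since in general
$\div\psi_{j,k,B}=N_{j,k}^{-2}\phi_k*\big(\eta_2^j\cdot\nabla(a_{j,k,B}\varphi_{j,k})\sin(N_{j,k}\eta^j\cdot x)\big)\neq0$.
Of your options, only the first gives an exact statement. Set $f=\sum_jN_{j,k}\psi_{j,k,B}$, feed the antisymmetric part $2(\nabla f-(\nabla f)^T)$ into the lemma, and let the third identity in \eqref{a-convex-integration-identity} hold modulo $p\Id$. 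This is harmless downstream because $\mathbb Q_s(p\Id)=0$. Making $\div\psi_{j,k,B}$ merely small does not give an identity, and redefining $\psi_{j,k,B}$ changes the objects in the statement.
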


\begin{proof}
\textbf{Construction of $a_{j,k,u}$.}
    We apply the symmetric geometric Lemma \ref{le-geometry1} to show the first identity in \eqref{a-convex-integration-identity}. This part is similar to the proof of Lemma 4.2 from \cite{CDP}. We only provide the explicit construction of $a_{j,k,u}$ and outline the main steps of the estimates.
    
 At the initial frequency level, we choose $a_{1,0,u}(x)=1$, $a_{j,0,u}(x)=0$ for $j\neq1$. For $k\geq 0$, we define iteratively
\[
a_{j,k+1,u}(x)=c^{-\frac12}\chi_{k+1}(x)\Gamma_j \big(\Id+cS_{k,u}(x)\big)
\]
with
\[
S_{k,u}\coloneqq2\mathcal D \sum_{j\in\Lambda_u} N_{j,k}\psi_{j,k,u},
\]
where $\psi_{j,k,u}$ is defined in terms of $a_{j,k,u}$ as in \eqref{def_psi_0}--\eqref{def_psi},  and the constant $c$ is chosen 
small enough so that
\begin{equation} \label{eq:condition_on_c}
c\|S_{k,u}\|_\infty \leq c_0, \quad \forall k\geq 0,
\end{equation}
and hence Lemma \ref{le-geometry1} can be applied. 
We have 
\[\|a_{j,k,u}\|_\infty \lesssim c^{-\frac12}, \quad \forall j, \forall k\geq 0,\]
and the rough estimates
\[
\begin{split}
\|\nabla^n \psi_{j,k,u}\|_\infty &\lesssim N^{-2}_{j,k} \ell^{-n}_k  \|a_{j,k,u}\|_\infty \lesssim N^{-2}_{j,k} \ell^{-n}_k  c^{-\frac12},\\
\|\nabla^n S_{k,u}\|_\infty &\lesssim \sum_j N_{j,k}\|\nabla^{n+1} \psi_{j,k,u}\|_\infty \lesssim N^{-1}_{1,k} \ell_k^{-n-1} c^{-\frac12},\\
\|\nabla^n_x\Gamma_j(\Id+cS_{k,u}(x))\|_\infty & \lesssim c^{\frac{n}2}N^{-n}_{1,k} \ell_k^{-2n},\\
\|\nabla^n a_{j,k+1,u}\|_\infty &\lesssim c^{\frac{n-1}{2}}N^{-n}_{1,k} \ell_k^{-2n}.
\end{split}
\]

Thanks to the scales $\ell_{k-1}^{-1}  =  N_{1,k-1}^{\frac12}N_{1,k}^{\frac12}$ and $M_{j,k} \leq  N_{j,k}$, the above estimates can be improved to
\[
\begin{split}
\|\nabla^{n} \psi_{j,k,u}\|_\infty&\lesssim_n N_{j,k}^{-2} \sum_{i=0}^{n} \|\nabla^i(a_{j,k,u}\varphi_{j,k,u})\|_\infty N_{j,k}^{n-i}\\
&\lesssim_n  N_{j,k}^{-2}\sum_{i=0}^n (c^{\frac{i-1}{2}}N^{-i}_{1,k-1} \ell_{k-1}^{-2i} + c^{-\frac12} M_{j,k}^i)N_{j,k}^{n-i}\\
&\lesssim_n N_{j,k}^{-2} c^{-\frac{1}{2}} N_{j,k}^{n},
\end{split}
\]
and hence
\[
\begin{split}
\|\nabla^nS_{k,u}\|_\infty &\lesssim \sum_{j\in\Lambda_u}  N_{j,k}\|\nabla^{n+1} \psi_{j,k,u}\|_\infty\\
&\lesssim \sum_{j\in\Lambda_u}  N_{j,k} N_{j,k}^{-2} c^{-\frac{1}{2}}  N_{j,k}^{n+1}\\
&\lesssim \sum_{j\in\Lambda_u} c^{-\frac{1}{2}}  N_{j,k}^{n}\\
&\lesssim c^{-\frac{1}{2}}  N_{J_d,k}^{n},
\end{split}
\]
\[
\begin{split}
\|\nabla^n a_{j,k+1,u}\|_\infty &= \big\|\nabla^n \Big(c^{-\frac12}\chi_{k+1}(x)\Gamma_j\big(\Id+cS_{k,u}(x)\big)\Big)\big\|_\infty\\
&\lesssim_n c^{-\frac12} \|\nabla^n_x\Gamma_j(\Id+cS_{k,u}(x))\|_\infty\\
&\lesssim_n N^{n}_{J_d,k}.
\end{split}
\]
	Therefore the estimates \eqref{eq:a_bounds} and \eqref{eq:Dpsi_bounds} hold for $a_{j,k,u}$ and $\psi_{j,k,u}$.

	Now we proceed to verify the first identity in \eqref{a-convex-integration-identity},
	\begin{align*}
	    \sum_{j\in\Lambda_u} a_{j,k+1,u}^2\eta^j\otimes\eta^j&=c^{-1}\chi_{k+1}^2(x) \sum_{j\in\Lambda_u} \Gamma_j^2\big(\Id+cS_{k,u}(x)\big)\eta^j\otimes\eta^j\\
	    &=\chi_{k+1}^2(x)(c^{-1}\Id+S_{k,u}(x)).
	\end{align*}
	By definition $\chi_{k+1}\equiv1$ on $\Omega_k$; it is thus sufficient to show that $\psi_{j,k,u}$, and hence $S_{k,u}$, are supported on $\Omega_k$. Indeed,
	\begin{align*}
	    \supp\psi_{j,k,u}\subset (\supp{a_{j,k,u}}\cap\supp\varphi_{j,k,u})+B(0,\ell_k).
	\end{align*}
	Note \eqref{a-support} for $a_{j,k,u}$ follows from the factor $\chi_{k+1}$ in the definition of $a_{j,k+1,u}$.
	Combining \eqref{a-support},  $\supp\varphi_{j,k,u}\subset\mathcal C_{j,k}(\delta_0)$, and the definitions of $\Omega_k$ and $\widetilde\Omega_k$, we see that $\supp S_{k,u}\subset \Omega_k$, by choosing parameters such that $\ell_kM_{j,k'}\ll2^{-(k-k')}$ for all $j$ and $k'\leq k$.

\textbf{Construction of $a_{j,k,B}$.}
 Set
	\[
	a_{j,0,B}\equiv 0,\qquad j\in\Lambda_B,
	\]
	and for $k\ge0$ define
	\[
	S_{k,B}^{\mathrm{sym}}\coloneqq 2\mathcal D\sum_{j\in\Lambda_B}N_{j,k}\psi_{j,k,c},\qquad
	S_{k,B}^{\mathrm{skw}}\coloneqq 2\mathcal D_s\sum_{j\in\Lambda_B}N_{j,k}\psi_{j,k,B}.
	\]
	By Lemma \ref{le-geometry-couple}, there exist smooth functions $\Theta_j(R,G)>0$ and a smooth scalar map $\Pi(R,G)$, defined on $B_{\varepsilon_*}(0)\subset \mathrm{Sym}^3\times\mathfrak{so}(3)$, such that
	\[
	R-\Pi(R,G)\Id=\sum_{j\in\Lambda_B}\Theta_j^2(R,G)\,(\eta_1^j\otimes\eta_1^j-\eta_2^j\otimes\eta_2^j),
	\]
	\[
	G=\sum_{j\in\Lambda_B}\Theta_j^2(R,G)\,(\eta_1^j\otimes\eta_2^j-\eta_2^j\otimes\eta_1^j).
	\]
	Define recursively
	\[
	a_{j,k+1,B}(x)\coloneqq c^{-1/2}\chi_{k+1}(x)\Theta_j\!\Big(cS_{k,B}^{\mathrm{sym}}(x),\,cS_{k,B}^{\mathrm{skw}}(x)\Big).
	\]

	Exactly as above, from the definitions \eqref{def_psi_0b}--\eqref{def_psi_c}, we get rough bounds
	\[
	\|\nabla^n\psi_{j,k,B}\|_\infty+\|\nabla^n\psi_{j,k,c}\|_\infty
	\lesssim N_{j,k}^{-2}\ell_k^{-n}c^{-1/2},
	\]
	and hence
	\[
	\|\nabla^nS_{k,B}^{\mathrm{sym}}\|_\infty+\|\nabla^nS_{k,B}^{\mathrm{skw}}\|_\infty
	\lesssim N_{1,k}^{-1}\ell_k^{-n-1}c^{-1/2}.
	\]
	Taking $c>0$ sufficiently small ensures
	\[
	c\Big(\|S_{k,B}^{\mathrm{sym}}\|_\infty+\|S_{k,B}^{\mathrm{skw}}\|_\infty\Big)\le \varepsilon_*
	\]
	for all $k$. Applying the multivariate Fa\`a di Bruno formula to $\Theta_j(cS_{k,B}^{\mathrm{sym}},cS_{k,B}^{\mathrm{skw}})$ gives
	\[
	\|\nabla_x^n\Theta_j(cS_{k,B}^{\mathrm{sym}},cS_{k,B}^{\mathrm{skw}})\|_\infty
	\lesssim c^{n/2}N_{1,k}^{-n}\ell_k^{-2n},
	\]
	thus
	\[
	\|\nabla^n a_{j,k+1,B}\|_\infty\lesssim c^{\frac{n-1}{2}}N_{1,k}^{-n}\ell_k^{-2n}.
	\]
	Similarly as for the $u$-part in the proof above, this rough bound implies the optimal estimates
	\[
	\|\nabla^n\psi_{j,k,B}\|_\infty+\|\nabla^n\psi_{j,k,c}\|_\infty
	\lesssim N_{j,k}^{-2}c^{-1/2}N_{j,k}^{n},
	\]
	\[
	\|\nabla^nS_{k,B}^{\mathrm{sym}}\|_\infty+\|\nabla^nS_{k,B}^{\mathrm{skw}}\|_\infty
	\lesssim c^{-1/2}N_{J_d,k}^{n},
	\]
	and therefore
	\[
	\|\nabla^na_{j,k+1,B}\|_\infty\lesssim_n N_{J_d,k}^{n}.
	\]
	So the bound \eqref{eq:a_bounds} holds for $a_{j,k,B}$ as well; and hence \eqref{eq:Dpsi_bounds} also holds.

	Using the coupled decomposition lemma at
	\[
	(R,G)=\big(cS_{k,B}^{\mathrm{sym}}(x),\,cS_{k,B}^{\mathrm{skw}}(x)\big),
	\]
	we obtain
	\begin{align*}
	\sum_{j\in\Lambda_B}a_{j,k+1,B}^2(\eta_1^j\otimes\eta_1^j-\eta_2^j\otimes\eta_2^j)
	&=\chi_{k+1}^2\Big(S_{k,B}^{\mathrm{sym}}-c^{-1}\Pi(cS_{k,B}^{\mathrm{sym}},cS_{k,B}^{\mathrm{skw}})\Id\Big),\\
	\sum_{j\in\Lambda_B}a_{j,k+1,B}^2(\eta_1^j\otimes\eta_2^j-\eta_2^j\otimes\eta_1^j)
	&=\chi_{k+1}^2S_{k,B}^{\mathrm{skw}}.
	\end{align*}
	As above, one checks
	\[
	\supp\psi_{j,k,B}\subset\Omega_k,\qquad \supp\psi_{j,k,c}\subset\Omega_k,
	\]
	which implies $\supp S_{k,B}^{\mathrm{sym}},\supp S_{k,B}^{\mathrm{skw}}\subset\Omega_k$, hence $\chi_{k+1}\equiv1$ on these supports. Therefore
	\[
	\chi_{k+1}^2S_{k,B}^{\mathrm{sym}}=S_{k,B}^{\mathrm{sym}},\qquad
	\chi_{k+1}^2S_{k,B}^{\mathrm{skw}}=S_{k,B}^{\mathrm{skw}},
	\]
	and the second and third identities in \eqref{a-convex-integration-identity} follow (with pressure absorbed into the scalar term $p$).

	Finally, \eqref{a-support} for $a_{j,k,B}$ is immediate from the factor $\chi_{k+1}$ in the definition of $a_{j,k+1,B}$, and the support inclusions above imply $\supp\psi_{j,k,u},\supp\psi_{j,k,B},\supp\psi_{j,k,c}\subset\Omega_k$.
\end{proof}

\section{The principal part of the solution}\label{sec:est}

\subsection{Construction of the principal part of the solution}
With the preparations in the previous section,
we first define the approximate principal velocity and magnetic field, 
\begin{equation}\label{app-prin}
\begin{split}
\bar v_k(x,t)&= \sum_{j\in\Lambda_u\cup\Lambda_B} \bar v_{j,k}(x,t) \\
&= \sum_{j\in\Lambda_u} -N_{j,k} e^{-N_{j,k}
^2t} \Delta \psi_{j,k,u}(x)+\sum_{j\in\Lambda_B} -N_{j,k} e^{-N_{j,k}
^2t} \Delta \psi_{j,k,c}(x),\\
\bar h_k(x,t)&= \sum_{j\in\Lambda_B} \bar h_{j,k}(x,t) = \sum_{j\in\Lambda_B} -N_{j,k} e^{-N_{j,k}
^2t} \Delta \psi_{j,k,B}(x)
\end{split}
\end{equation}
for $k\geq 0$. The principal velocity and magnetic field are then defined to be
\begin{equation}\label{prin}
\begin{split}
v_k(x,t) &= -\int_0^{t} e^{(t-s)\Delta }\mathbb P\div (\bar v_{k+1}\otimes \bar v_{k+1}-\bar h_{k+1}\otimes \bar h_{k+1})(s) \, ds,\\
h_k(x,t) &= -\int_0^{t} e^{(t-s)\Delta }\mathbb P\div (\bar v_{k+1}\otimes \bar h_{k+1}-\bar h_{k+1}\otimes \bar v_{k+1})(s) \, ds
\end{split}
\end{equation}
for $k\geq 0$.

The full principal part $(v,h)$ is thus defined as 
\begin{align}\label{full-prin}
    v(x,t)=\sum_{k=0}^\infty v_k(x,t), \qquad h(x,t)=\sum_{k=0}^\infty h_k(x,t).
\end{align}
We also introduce the corresponding approximate principal fields by setting
\begin{align}\label{full-app}
    \bar v(x,t)=\sum_{k=0}^\infty \bar v_k(x,t), \qquad  \bar h(x,t)=\sum_{k=0}^\infty \bar h_k(x,t).
\end{align}

We also define tensor fields $R_k$, $\bar R_k$ and $H_k$, $\bar H_k$ such that 
\begin{equation}\notag
v_k=\div R_k, \qquad \bar v_k=\div\bar R_k, \qquad h_k=\div H_k, \qquad \bar h_k=\div\bar H_k.
\end{equation}
In particular, we choose
\begin{equation}\label{tensor-app}
\begin{split}
\bar R_k(x,t)&= \sum_{j\in\Lambda_u\cup\Lambda_B} \bar R_{j,k}(x,t)\\
& = \sum_{j\in\Lambda_u} -N_{j,k} e^{-N_{j,k}^2t}  \newD  \psi_{j,k,u}(x)+\sum_{j\in\Lambda_B} -N_{j,k} e^{-N_{j,k}^2t}  \newD  \psi_{j,k,c}(x),\\
&=: \bar R_{k,u}(x,t)+ \bar R_{k,c}(x,t),\\
\bar H_k(x,t)&= \sum_{j\in\Lambda_B} \bar H_{j,k}(x,t)
 = \sum_{j\in\Lambda_B} -N_{j,k} e^{-N_{j,k}^2t}  \mathcal D_s  \psi_{j,k,B}(x)
\end{split}
\end{equation}
and
\begin{equation}\label{tensor-RH}
\begin{split}
R_k(x,t) &= -\int_0^{t} e^{(t-s)\Delta }\mathcal R\mathbb P\div (\bar v_{k+1}\otimes \bar v_{k+1}-\bar h_{k+1}\otimes \bar h_{k+1})(s) \, ds,\\
H_k(x,t) &= -\int_0^{t} e^{(t-s)\Delta }\mathcal R_s\mathbb P\div (\bar v_{k+1}\otimes \bar h_{k+1}-\bar h_{k+1}\otimes \bar v_{k+1})(s) \, ds
\end{split}
\end{equation}
recalling the definitions from Section~\ref{sec:notation}.

In the next subsection we compare the exact principal pair $(v,h)$ with its approximate counterpart $(\bar v,\bar h)$. This comparison will show that $(v,h)$ solves an MHD system with forcing terms that are suitably small.

\subsection{Error estimates}

Now we fix the time scales $t_k\coloneqq N_{J_d,k}^{-4}$ satisfying 
\begin{equation}\label{time-sequence}
N_{1, k+1}^{-2} \ll t_k \ll N_{J_d,k}^{-3}.
\end{equation}
We then show the following estimate for $(\bar R_k, \bar H_k)$. 

\begin{proposition}\label{barv-I_estimate_proposition}
Denote
\[
\begin{split}
\mathcal{I}_{k,u} &= -\int_0^{t} e^{(t-s)\Delta }\sum_{j\in \Lambda_u} N_{j,k+1}^2 e^{-2N_{j,k+1}^2s} \mathbb Q (a_{j,k+1,u}^2 \eta_1^j\otimes \eta_1^j) \, ds,\\
\mathcal{I}_{k,c} &= -\int_0^{t} e^{(t-s)\Delta }\sum_{j\in \Lambda_B} N_{j,k+1}^2 e^{-2N_{j,k+1}^2s} \mathbb Q (a_{j,k+1,B}^2 (\eta_1^j\otimes \eta_1^j-\eta_2^j\otimes \eta_2^j)) \, ds,\\
\mathcal{I}_{k,B} &= -\int_0^{t} e^{(t-s)\Delta }\sum_{j\in \Lambda_B} N_{j,k+1}^2 e^{-2N_{j,k+1}^2s} \mathbb Q_s (a_{j,k+1,B}^2 (\eta_1^j\otimes \eta_2^j-\eta_2^j\otimes \eta_1^j)) \, ds.
\end{split}
\]
Let $A$ and $b$ be large enough.
For any $\varepsilon_0 >0$, $\alpha \in(0, \frac{1}{10})$, and $\bar n \in \mathbb{N}$, we have for $k\geq 1$ and $t\geq 0$:
\[
\|\nabla^n(\bar R_k(t) - \mathcal{I}_{k,u}(t)- \mathcal{I}_{k,c}(t))\|_{\infty} \leq \varepsilon_0 N^{-\alpha}_{1,k}(t^{-\frac{n}{2} +\alpha}+1), \qquad  n=1,2,\dots,\bar n,
\]
\[
\|\bar R_k(t) - \mathcal{I}_{k,u}(t)- \mathcal{I}_{k,c}(t)\|_{\infty} \lesssim_\varepsilon \varepsilon_0N_{1,k}^{-\alpha}+\mathbbm{1}_{t\leq t_k}N_{J_d, k}^{\varepsilon}, \qquad \text{if} \qquad d=2,
\]
\[
\|\bar R_k(t) - \mathcal{I}_{k,u}(t)- \mathcal{I}_{k,c}(t)\|_{\infty} \lesssim \varepsilon_0N_{1,k}^{-\alpha}+\mathbbm1_{t\leq t_k}, \qquad \text{if} \qquad d\geq 3,
\]
\[
\|\nabla^n(\bar H_k(t) - \mathcal{I}_{k,B}(t))\|_{\infty} \leq \varepsilon_0 N^{-\alpha}_{1,k}(t^{-\frac{n}{2} +\alpha}+1), \qquad  n=1,2,\dots,\bar n,
\]
\[
\|\bar H_k(t) - \mathcal{I}_{k,B}(t)\|_{\infty} \lesssim_\varepsilon \varepsilon_0N_{1,k}^{-\alpha}+\mathbbm{1}_{t\leq t_k}N_{J_d, k}^{\varepsilon}, \qquad \text{if} \qquad d=2,
\]
\[
\|\bar H_k(t) - \mathcal{I}_{k,B}(t)\|_{\infty} \lesssim \varepsilon_0N_{1,k}^{-\alpha}+\mathbbm1_{t\leq t_k}, \qquad \text{if} \qquad d\geq 3.
\]
\end{proposition}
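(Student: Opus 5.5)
The idea is to compare $\bar R_k$ with the Duhamel integrals $\mathcal I_{k,u}+\mathcal I_{k,c}$ through the identities \eqref{a-convex-integration-identity}, following the proof of the analogous proposition in \cite{CDP}. The magnetic part $\bar H_k-\mathcal I_{k,B}$ is handled in parallel, with $\mathcal D_s,\mathbb Q_s$ in place of $\mathcal D,\mathbb Q$.

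\textbf{Step 1: rewrite $\mathcal I_{k,u}+\mathcal I_{k,c}$.} Insert the first two identities of \eqref{a-convex-integration-identity}. Then $\mathbb Q(p\Id)=\mathcal R\mathbb P\nabla p=0$ removes the scalar terms, and $\mathbb Q\mathcal D=2\nabla\odot\mathbb P$ by \eqref{eq:Q_identities}. This gives
\[
\mathcal I_{k,u}+\mathcal I_{k,c}=-\sum_{j}c_j(t)\,4\nabla\odot\mathbb P\Big(\sum_{i}N_{i,k}\psi_{i,k}\Big),\qquad c_j(t)=\int_0^t N_{j,k+1}^2e^{-2N_{j,k+1}^2s}e^{(t-s)\Delta}\,ds .
\]
Here $i$ runs over $\Lambda_u$ (with $\psi_{i,k,u}$) and over $\Lambda_B$ (with $\psi_{i,k,c}$). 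The operator $e^{(t-s)\Delta}$ acts on functions with frequency $\lesssim N_{J_d,k}$, since by \eqref{eq:a_bounds} the amplitudes $a_{j,k+1}$ are essentially band-limited at scale $N_{J_d,k}$.

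\textbf{Step 2: compare the time profiles.} Write $e^{(t-s)\Delta}=e^{t\Delta}+(e^{(t-s)\Delta}-e^{t\Delta})$. The correction costs $s|\nabla|^2\lesssim N_{j,k+1}^{-2}N_{J_d,k}^2$, which is $\ll N_{1,k}^{-\alpha}$ after integration by the scale separation. Next, $\int_0^t N^2e^{-2N^2s}\,ds=\tfrac12(1-e^{-2N^2t})$, and $1-e^{-2N^2t}\approx1$ for $t\geq t_k\gg N_{1,k+1}^{-2}$. So for $t\gtrsim t_k$ we get, up to small errors,
\[
\mathcal I\approx -2\sum_i N_{i,k}\,e^{t\Delta}\,2\nabla\odot\mathbb P\psi_{i,k}.
\]
On the other side, $\bar R_k=-\sum_i N_{i,k}e^{-N_{i,k}^2t}\newD\psi_{i,k}$. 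Use $\newD=\mathbb Q\mathcal D+(2\nabla\otimes\nabla\Delta^{-1}-\Id)\div$. The divergence part is controlled because $\div\psi_{i,k}$ is lower order: $\eta^j\cdot\eta^j_1=0$, so the derivative falls only on the amplitude, mollifier or cutoff, with gain $M_{i,k}/N_{i,k}\le A^{-c}$. Finally, since $\psi_{i,k}$ has frequency $\approx N_{i,k}$ with a relative spread $M_{i,k}/N_{i,k}$, we have $e^{t\Delta}\psi_{i,k}\approx e^{-N_{i,k}^2t}\psi_{i,k}$ with a relative error $\lesssim (N_{i,k}^2t)^{1/2}(M_{i,k}/N_{i,k})$ times $e^{-cN_{i,k}^2t}$. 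The constants $2,2,\tfrac12$ then match.

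\textbf{Step 3: derivative bounds and small times.} Each error term has the form $N_{i,k}\,e^{-cN_{i,k}^2t}\,\nabla^n(\cdot)$ applied to something of size $N_{i,k}^{-1}\cdot(\text{small gain})$, with $\nabla^n$ costing $N_{i,k}^n$. The inequality $N^n e^{-cN^2t}\lesssim t^{-n/2}$ (interpolated to $t^{-n/2+\alpha}N^{2\alpha}$) then converts these into $\varepsilon_0N_{1,k}^{-\alpha}(t^{-n/2+\alpha}+1)$. For this we take $A,b$ large so that the gains $A^{-c}$ beat $N^{2\alpha}$ and $\varepsilon_0^{-1}$, using $\alpha<1/10$.

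For $n=0$ and $t\le t_k$ the comparison fails, and we bound the two sides separately. We have $|\bar R_k|\lesssim\sum_i e^{-N_{i,k}^2t}$, and $\mathcal I_{k}$ is bounded in $L^\infty$ by the boundedness of $\mathbb Q$ on the relevant frequency-localized pieces. For $d\ge3$ this gives $O(1)$. For $d=2$ the many scales cannot be summed without loss, so we pay $N_{J_d,k}^{\varepsilon}$ for an $L^\infty$ bound of the zero-order operators $\mathbb Q$, $\mathbb P$ via Littlewood–Paley: $\log N$ many pieces, each bounded.

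\textbf{Step 4: the magnetic field.} The same steps apply, using \eqref{eq:Qs_identities}, $\mathbb Q_s\mathcal D_s=\nabla\mathbb P-(\nabla\mathbb P)^T$, the third identity of \eqref{a-convex-integration-identity}, and $\mathcal D_s=\mathbb Q_s\mathcal D_s+\Id\div$.

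\textbf{Main obstacle.} The hard part is Step 2's uniform-in-$t$ comparison of $e^{t\Delta}\mathbb P\nabla\psi_{i,k}$ with $e^{-N_{i,k}^2t}\nabla\psi_{i,k}$, together with the nonlocal $\mathbb P$ acting on cutoff-localized profiles. I would treat it by splitting $\psi_{i,k}$ into a Littlewood–Paley piece at $|\xi|\in[N_{i,k}/2,2N_{i,k}]$ plus a Schwartz-small tail. The tail is controlled by \eqref{eq:Dpsi_bounds} and the mollifier $\phi_k$ at scale $\ell_k\ll N_{J_d,k}^{-1}$. On the main piece, $\mathbb P$ acts through a smooth symbol and the heat multiplier is Taylor expanded.
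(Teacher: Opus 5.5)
Your plan is essentially the paper's proof. The identities \eqref{a-convex-integration-identity}, together with $\mathbb Q(p\Id)=0$, $\mathbb Q\mathcal D=2\nabla\odot\mathbb P$ and $\mathbb Q_s\mathcal D_s=\nabla\mathbb P-(\nabla\mathbb P)^T$, turn $\mathcal I_{k,u}+\mathcal I_{k,c}$ and $\mathcal I_{k,B}$ into heat-evolved copies of $\bar R_k,\bar H_k$; one then controls the time-weight mismatch, the $e^{t\Delta}$ versus $e^{-N_{j,k}^2t}$ mismatch (your Littlewood--Paley/Taylor step replaces Lemma~\ref{l:commutator}) and the lower-order $\div\psi$ terms, and uses crude bounds on $[0,t_k]$. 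Replacing $e^{(t-s)\Delta}$ by $e^{t\Delta}$ on all of $[0,t]$ instead of splitting at $t_k$ is only a bookkeeping variant.

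One point needs fixing. The identities only control the \emph{equally weighted} sum over $j$, so your Step 1 formula is false as written when $d=2$, and the prefactor $\sum_j c_j(t)$ overcounts even for $d\ge3$. The identity may be inserted only after Step 2 has reduced every $c_j(t)$ to $\tfrac12 e^{t\Delta}$, which requires $t\gg N_{1,k+1}^{-2}$. This is also the real reason the $d=2$ bound on $[0,t_k]$ loses $N_{J_d,k}^{\varepsilon}$: there $\mathbb Q$ must act on each unlocalized $a_{j,k+1}^2$ separately. Finally, your Step 2 display carries a spurious factor $2$; the correct approximation is $\mathcal I\approx-\sum_i N_{i,k}e^{t\Delta}2\nabla\odot\mathbb P\psi_{i,k}$.
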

\begin{proof}
{\textbf{Step I.}}
The goal of this step is to show 
\begin{equation}\label{step1}
\begin{split}
\|\nabla^n(\bar R_{k,u}(t) - \mathcal{I}_{k,u}(t))\|_{\infty} &\leq \varepsilon_0 N^{-\alpha}_{1,k}(t^{-\frac{n}{2} +\alpha}+1), \qquad  n=1,2,\dots,\bar n,\\
\|\bar R_{k,u}(t) - \mathcal{I}_{k,u}(t)\|_{\infty} &\lesssim_\varepsilon \varepsilon_0N_{1,k}^{-\alpha}+\mathbbm{1}_{t\leq t_k}N_{J_d, k}^{\varepsilon}, \qquad \text{if} \qquad d=2,\\
\|\bar R_{k,u}(t) - \mathcal{I}_{k,u}(t)\|_{\infty} &\lesssim \varepsilon_0N_{1,k}^{-\alpha}+\mathbbm1_{t\leq t_k}, \qquad \text{if} \qquad d\geq 3.
\end{split}
\end{equation}
The estimates can be obtained as in \cite{CDP} for the Navier-Stokes equation. We only outline the main steps to establish \eqref{step1}.
We start with a rough estimate to be used on $[0,t_k]$, for any $\varepsilon>0$
\[
\begin{split}
\|\nabla^n\mathcal{I}_{k,u}\|_{\infty} 
\lesssim_\varepsilon \int_0^{t} \sum_{j\in\Lambda_u} N_{j,k+1}^2 e^{-2N_{j,k+1}
^2s} \|\nabla^{n}(a_{j,k+1,u}^2)\|_{C^\varepsilon} \, ds
\lesssim_\varepsilon N_{J_d, k}^{n+\varepsilon}.
\end{split}
\]
In the case $d\geq 3$, we have $N_{j,k}=N_{1,k}$ for all $j$ and the estimate above can be improved. In view of \eqref{a-convex-integration-identity}, we can write
\[
\begin{split}
&\quad\nabla^n\mathbb Q\sum_{j\in\Lambda_u} (a_{j,k+1,u}^2 \eta_1^j\otimes \eta_1^j) \\
&=2\nabla^n\mathcal R \Delta \sum_{j\in\Lambda_u} N_{1,k}^{-1}\phi_k*(a_{j,k,u}\varphi_{j,k}\eta_1^j\sin(N_{1,k}\eta^j\cdot x))\\
&\quad - 2\nabla^n\mathcal R \nabla \sum_{j\in\Lambda_u} N_{1,k}^{-1}\phi_k*(\nabla(a_{j,k,u}\varphi_{j,k})\cdot \eta_1^j\sin(N_{1,k}\eta^j\cdot x)).
\end{split}
\]
Since $\mathcal R\Delta$ is a local operator, we have
\[
\Big\|\nabla^n\mathbb Q\sum_{j\in\Lambda_u} (a_{j,k+1,u}^2 \eta_1^j\otimes \eta_1^j)\Big\|_\infty \lesssim N_{J_d, k}^n.
\]
Hence we have the improved estimate for $d\geq 3$ 
\[
\begin{split}
\|\nabla^n\mathcal{I}_{k,u}\|_\infty 
\lesssim \int_0^{t}  N_{1,k+1}^2 e^{-2N_{1,k+1}^2s} N_{J_d, k}^n \, ds
\lesssim N_{J_d, k}^n,
\end{split}
\]
and
\[
\|\nabla^n\bar R_{k,u}(t)\|_\infty\lesssim \sum_j N_{j,k}^{n}e^{-N_{j,k}^2t}\lesssim N_{J_d,k}^{n}.
\]
Thus it follows for $t\in [0,t_k]$:
\begin{equation} \label{eq:bound_before_t_k_2D}
\|\nabla^n(\bar R_{k,u}(t) - \mathcal{I}_{k,u}(t))\|_\infty  \lesssim_\varepsilon N_{J_d, k}^{n+\varepsilon}, \qquad \text{if} \qquad d=2,
\end{equation}
\begin{equation} \label{eq:bound_before_t_k_3D}
\|\nabla^n(\bar R_{k,u}(t) - \mathcal{I}_{k,u}(t))\|_\infty  \lesssim_\varepsilon N_{J_d, k}^n, \qquad \text{if} \qquad d\geq 3.
\end{equation}

On the other hand, for $t\geq t_k$, the integral $\mathcal{I}_{k,u}$ can be split into
\begin{equation}\notag
\begin{split}
\mathcal{I}_{k,u}(t) &=-\int_0^{t_k} e^{(t-s)\Delta }\sum_{j\in\Lambda_u} N_{j,k+1}^2 e^{-2N_{j,k+1}
^2s} \mathbb Q (a_{j,k+1,u}^2(x) \eta_1^j\otimes \eta_1^j) \, ds\\
&\quad-\int_{t_k}^t e^{(t-s)\Delta }\sum_{j\in\Lambda_u} N_{j,k+1}^2 e^{-2N_{j,k+1}
^2s} \mathbb Q (a_{j,k+1,u}^2(x) \eta_1^j\otimes \eta_1^j) \, ds\\
&=:\mathcal{I}_{k,u}^{<t_k} + \mathcal{I}_{k,u}^{>t_k},
\end{split}
\end{equation}
where the first term is close to $\bar R_{k,u}$ while the second one is a minor error term. Indeed, define
\[
\tilde{\mathcal{I}}_{k,u}^{<t_k} = -e^{(t-t_k)\Delta}\int_0^{t_k} \sum_{j\in\Lambda_u} N_{j,k+1}^2 e^{-2N_{j,k+1}
^2s} \mathbb Q  \left(a_{j,k+1,u}^2(x) \eta_1^j\otimes \eta_1^j\right) \, ds.
\]
We can show
\[
 \| \nabla^n(\mathcal{I}_{k,u}^{<t_k} - \tilde{\mathcal{I}}_{k,u}^{<t_k})\|_\infty 
\lesssim t_k N_{J_d,k}^{n+2+\varepsilon}, \qquad t \geq t_k.
\]
Applying the first identity of \eqref{a-convex-integration-identity} and \eqref{eq:Q_identities} we can write
\[
\begin{split}
\tilde{\mathcal{I}}_{k,u}^{<t_k} 
&= -\sum_{j\in\Lambda_u} N_{j,k} e^{-N_{j,k}^2t} \newD  \psi_{j,k,u} - \sum_{j\in\Lambda_u} N_{j,k} (e^{-N_{j,k}^2(t-t_k)}-e^{-N_{j,k}^2t}) \newD \psi_{j,k,u} \\
&\qquad + \sum_{j\in\Lambda_u} N_{j,k} (e^{-N_{j,k}^2(t-t_k)}-e^{(t-t_k)\Delta}) \newD  \psi_{j,k,u} \\
&\qquad + \frac{1}{2}e^{(t-t_k)\Delta}\sum_{j\in\Lambda_u} e^{-2N_{j,k+1}
^2t_k} \mathbb Q \left(a_{j,k+1,u}^2(x) \eta_1^j\otimes \eta_1^j\right) -(\Id-2\frac{\nabla\otimes\nabla}{\Delta})p\\
&= \bar R_{k,u} + \tilde{\mathcal{I}}_{k,u}^1 +\tilde{\mathcal{I}}_{k,u}^2 + \tilde{\mathcal{I}}_{k,u}^3 -(\Id-2\frac{\nabla\otimes\nabla}{\Delta})p.
\end{split}
\]
All the terms on the right hand side except $\bar R_{k,u}$ are shown to be minor error terms,
\begin{equation}\notag
\begin{split}
\|\nabla^n\tilde{\mathcal{I}}_{k,u}^1\|_{\infty} &\lesssim \sum_{j\in\Lambda_u}N_{j,k}^{n+2}t_ke^{-N_{j,k}^2t},\\
 \|\nabla^n\tilde{\mathcal I}_{k,u}^2\|_\infty&\lesssim \sum_{j\in\Lambda_u}N_{j,k}^{n}\left(M_{j,k}N_{j,k}^{-1}e^{-N_{j,k}^2t/4} + N_{j,k}^{-m}M_{j,k}^m\right),\\
 \|\nabla^n\tilde{\mathcal{I}}_{k,u}^3\|_{\infty} &\lesssim \sum_{j\in\Lambda_u} N_{J_d, k}^{n+\varepsilon} e^{-2N_{j,k+1}^2t_k},\\
 \|\nabla^n (\Id-2\frac{\nabla\otimes\nabla}{\Delta})p\|_\infty&\lesssim \sum_{j\in\Lambda_u}\big(M_{j,k}N_{j,k}^{n-1+\varepsilon}e^{-N_{j,k}^2(t-t_k)/4}+N_{j,k}^{n-m+\varepsilon}M_{j,k}^{m}\big).
\end{split}
\end{equation}
Moreover we have
\[
\|\nabla^n\mathcal{I}_{k,u}^{>t_k}\|_{\infty} 
\lesssim \sum_{j\in\Lambda_u} N_{J_d, k}^{n+\varepsilon} e^{-2N_{j,k+1}^2t_k}.
\]

Combining the terms and estimates above yields for $t\geq t_k$,
\[
\begin{split}
\|\nabla^n(\bar R_{k,u}(t) - \mathcal{I}_{k,u}(t))\|_\infty 
&\leq \|\nabla^n(\tilde{\mathcal{I}}_{k,u}^1 +\tilde{\mathcal{I}}_{k,u}^2 +\tilde{\mathcal{I}}_{k,u}^3 -(\Id-2\frac{\nabla\otimes\nabla}{\Delta})p)\|_\infty \\
&\quad   +\| \nabla^n(\tilde{\mathcal{I}}_{k,u}^{<t_k} - \mathcal{I}_{k,u}^{<t_k})\|_\infty + \|\nabla^n\mathcal{I}_{k,u}^{>t_k}\|_\infty \\
&\lesssim_{\varepsilon,m} \sum_{j\in\Lambda_u} \big( N_{j,k}^{n}e^{-N_{j,k}^2t/4}( N_{j,k}^{2}t_k + M_{j,k}N_{j,k}^{-1}) + N_{j,k}^{n-m}M_{j,k}^m \big)\\
&\qquad +\sum_{j\in\Lambda_u}\big(M_{j,k}N_{j,k}^{n-1}e^{-N_{j,k}^2(t-t_k)/4}+N_{j,k}^{n-m}M_{j,k}^{m}\big)\\
&\qquad + \sum_{j\in\Lambda_u} N_{J_d, k}^{n+\varepsilon} e^{-2N_{j,k+1}^2t_k} +t_k N_{J_d,k}^{n+2+\varepsilon}.
\end{split}
\]
Choosing $\gamma$ properly in the definition of $M_{j,k}$ \eqref{M_definition}, for instance, $\gamma=\frac12$
one can obtain 
\[
\begin{split}
\|\nabla^n(\bar R_{k,u}(t) - \mathcal{I}_{k,u}(t))\|_{L^\infty_{x,t}} 
&\lesssim N_{j,k}^{n-\frac12}e^{-N_{j,k}^2t/4}, \quad t\geq t_k.
\end{split}
\]
Recall $t_k= N_{J_d,k}^{-4}$. Hence we have
\[
\begin{split}
\|\nabla^n(\bar R_{k,u}(t) - \mathcal{I}_{k,u}(t))\|_{L^\infty_{x,t}}  &\lesssim_\varepsilon N_{J_d, k}^{n+\varepsilon}
 = t_k^{-(n+\varepsilon)/4}, \quad t \in [0,t_k].
\end{split}
\]
Therefore we can choose $A$, $b$, and $m$ sufficiently large and $\varepsilon$ sufficiently small so that
\[
\|\nabla^n(\bar R_{k,u}(t) - \mathcal{I}_{k,u}(t))\|_\infty \leq_n \varepsilon_0 N^{-\alpha}_{1,k}(t^{-\frac{n}{2} +\alpha}+1), \quad \forall \varepsilon_0>0, \quad 0<\alpha < \frac{1}{10}
\]
for all $t\geq 0$ and $1\leq n\leq \bar n$.

In the case $n=0$, we apply estimates \eqref{eq:bound_before_t_k_2D} and \eqref{eq:bound_before_t_k_3D} directly.

{\textbf{Step II.}}
The goal of this step is to show 
\begin{equation}\label{step2}
\begin{split}
\|\nabla^n(\bar R_{k,c}(t) - \mathcal{I}_{k,c}(t))\|_{\infty} &\leq \varepsilon_0 N^{-\alpha}_{1,k}(t^{-\frac{n}{2} +\alpha}+1), \qquad  n=1,2,\dots,\bar n,\\
\|\bar R_{k,c}(t) - \mathcal{I}_{k,c}(t)\|_{\infty} &\lesssim_\varepsilon \varepsilon_0N_{1,k}^{-\alpha}+\mathbbm{1}_{t\leq t_k}N_{J_d, k}^{\varepsilon}, \qquad \text{if} \qquad d=2,\\
\|\bar R_{k,c}(t) - \mathcal{I}_{k,c}(t)\|_{\infty} &\lesssim \varepsilon_0N_{1,k}^{-\alpha}+\mathbbm1_{t\leq t_k}, \qquad \text{if} \qquad d\geq 3.
\end{split}
\end{equation}
First we derive rough estimates on $[0,t_k]$. For $n\ge0$, applying \eqref{eq:a_bounds} yields
\[
\begin{split}
\|\nabla^n\mathcal I_{k,c}\|_{\infty}
&\le \int_0^t\sum_{j\in\Lambda_B}N_{j,k+1}^2e^{-2N_{j,k+1}^2s}
\Big\|\nabla^n\mathbb Q\!\Big(a_{j,k+1,B}^2(\eta_1^j\otimes\eta_1^j-\eta_2^j\otimes\eta_2^j)\Big)\Big\|_\infty ds\\
&\lesssim_\varepsilon \int_0^t\sum_{j\in\Lambda_B}N_{j,k+1}^2e^{-2N_{j,k+1}^2s}
\|\nabla^n(a_{j,k+1,B}^2)\|_{C^\varepsilon}\,ds\\
&\lesssim_\varepsilon N_{J_d,k}^{n+\varepsilon}.
\end{split}
\]
In dimension $d\ge3$, this estimate can be improved. Indeed, due to $N_{j,k}=N_{1,k}$ for all $j$, the second identity in \eqref{a-convex-integration-identity} implies
\[
\sum_{j\in\Lambda_B}a_{j,k+1,B}^2(\eta_1^j\otimes\eta_1^j-\eta_2^j\otimes\eta_2^j)
=2\mathcal D\sum_{j\in\Lambda_B}N_{1,k}\psi_{j,k,c}+p\Id.
\]
Therefore
\[
\begin{split}
&\quad\nabla^n\mathbb Q\sum_{j\in\Lambda_B}a_{j,k+1,B}^2(\eta_1^j\otimes\eta_1^j-\eta_2^j\otimes\eta_2^j)\\
&=\nabla^n\mathcal R\mathbb P\div\left(2\mathcal D\sum_{j\in\Lambda_B}N_{1,k}\psi_{j,k,c}+p\Id\right)\\
&=2\nabla^n\mathcal R\Delta\mathbb P\sum_{j\in\Lambda_B}N_{1,k}\psi_{j,k,c}\\
&=2\nabla^n\mathcal R\Delta\mathbb P\sum_{j\in\Lambda_B}N_{1,k}^{-1}\phi_k*(a_{j,k,B}\varphi_{j,k}\eta_1^j\sin(N_{1,k}\eta^j\!\cdot x))\\
&=2\nabla^n\mathcal R\Delta\sum_{j\in\Lambda_B}N_{1,k}^{-1}\phi_k*(a_{j,k,B}\varphi_{j,k}\eta_1^j\sin(N_{1,k}\eta^j\!\cdot x))\\
&\quad-2\nabla^n\mathcal R\nabla\sum_{j\in\Lambda_B}N_{1,k}^{-1}\phi_k*(\nabla(a_{j,k,B}\varphi_{j,k})\!\cdot\!\eta_1^j\sin(N_{1,k}\eta^j\!\cdot x)).
\end{split}
\]
Since $\mathcal R\Delta$ is a local operator, we infer
\[
\Big\|\nabla^n\mathbb Q\sum_{j\in\Lambda_B}a_{j,k+1,B}^2(\eta_1^j\otimes\eta_1^j-\eta_2^j\otimes\eta_2^j)\Big\|_\infty
\lesssim N_{J_d,k}^n,
\]
and hence
\[
\|\nabla^n\mathcal I_{k,c}\|_\infty\lesssim N_{J_d,k}^n,\qquad d\ge3.
\]

On the other hand, by the definition of $\bar R_{k,c}$ in \eqref{tensor-app},
\[
\|\nabla^n\bar R_{k,c}(t)\|_\infty
\lesssim \sum_{j\in\Lambda_B}N_{j,k}^n e^{-N_{j,k}^2t}
\lesssim N_{J_d,k}^n.
\]
Therefore, for $t\in[0,t_k]$,
\begin{equation}\label{eq:bound_before_t_k_2D_c}
\|\nabla^n(\bar R_{k,c}(t)-\mathcal I_{k,c}(t))\|_\infty
\lesssim_\varepsilon N_{J_d,k}^{n+\varepsilon},\qquad d=2,
\end{equation}
and
\begin{equation}\label{eq:bound_before_t_k_3D_c}
\|\nabla^n(\bar R_{k,c}(t)-\mathcal I_{k,c}(t))\|_\infty
\lesssim N_{J_d,k}^{n},\qquad d\ge3.
\end{equation}

We next treat $t\ge t_k$ and split
\[
\begin{split}
\mathcal I_{k,c}(t)
&=-\int_0^{t_k}e^{(t-s)\Delta}\sum_{j\in\Lambda_B}N_{j,k+1}^2e^{-2N_{j,k+1}^2s}
\mathbb Q\!\left(a_{j,k+1,B}^2(\eta_1^j\otimes\eta_1^j-\eta_2^j\otimes\eta_2^j)\right)\,ds\\
&\quad-\int_{t_k}^{t}e^{(t-s)\Delta}\sum_{j\in\Lambda_B}N_{j,k+1}^2e^{-2N_{j,k+1}^2s}
\mathbb Q\!\left(a_{j,k+1,B}^2(\eta_1^j\otimes\eta_1^j-\eta_2^j\otimes\eta_2^j)\right)\,ds\\
&=:\mathcal I_{k,c}^{<t_k}(t)+\mathcal I_{k,c}^{>t_k}(t).
\end{split}
\]
Define
\[
\tilde{\mathcal I}_{k,c}^{<t_k}
:=-e^{(t-t_k)\Delta}\int_0^{t_k}\sum_{j\in\Lambda_B}N_{j,k+1}^2e^{-2N_{j,k+1}^2s}
\mathbb Q\!\left(a_{j,k+1,B}^2(\eta_1^j\otimes\eta_1^j-\eta_2^j\otimes\eta_2^j)\right)\,ds.
\]
Using \eqref{eq:a_bounds},
\[
\begin{split}
&\quad\|\nabla^n(e^{t\Delta}-\Id)\mathbb Q(a_{j,k+1,B}^2(\eta_1^j\otimes\eta_1^j-\eta_2^j\otimes\eta_2^j))\|_\infty\\
&\lesssim t\|\nabla^n\Delta\mathbb Q(a_{j,k+1,B}^2(\eta_1^j\otimes\eta_1^j-\eta_2^j\otimes\eta_2^j))\|_\infty\\
&\lesssim_\varepsilon t\,N_{J_d,k}^{n+2+\varepsilon}.
\end{split}
\]
Hence
\[
\begin{split}
\|\nabla^n(\mathcal I_{k,c}^{<t_k}-\tilde{\mathcal I}_{k,c}^{<t_k})\|_\infty
&\lesssim_\varepsilon \int_0^{t_k}\sum_{j\in\Lambda_B}N_{j,k+1}^2e^{-2N_{j,k+1}^2s}\,t_kN_{J_d,k}^{n+2+\varepsilon}\,ds\\
&\lesssim t_kN_{J_d,k}^{n+2+\varepsilon}.
\end{split}
\]

Next, by the second identity in \eqref{a-convex-integration-identity} and \eqref{eq:Q_identities},
\begin{equation}\label{eq:quadratic_identity_c}
\begin{split}
&\quad\frac12\sum_{j\in\Lambda_B}\mathbb Q\!\left(a_{j,k+1,B}^2(\eta_1^j\otimes\eta_1^j-\eta_2^j\otimes\eta_2^j)\right)\\
&=\sum_{j\in\Lambda_B}N_{j,k}\Big(\newD\psi_{j,k,c}+(\Id-2\Delta^{-1}\nabla\otimes\nabla)\div\psi_{j,k,c}\Big).
\end{split}
\end{equation}
Therefore
\[
\begin{split}
\tilde{\mathcal I}_{k,c}^{<t_k}
&=-\frac12e^{(t-t_k)\Delta}\sum_{j\in\Lambda_B}(1-e^{-2N_{j,k+1}^2t_k})
\mathbb Q\!\left(a_{j,k+1,B}^2(\eta_1^j\otimes\eta_1^j-\eta_2^j\otimes\eta_2^j)\right)\\
&=-\sum_{j\in\Lambda_B}N_{j,k}e^{(t-t_k)\Delta}\newD\psi_{j,k,c}\\
&\quad+\frac12e^{(t-t_k)\Delta}\sum_{j\in\Lambda_B}e^{-2N_{j,k+1}^2t_k}
\mathbb Q\!\left(a_{j,k+1,B}^2(\eta_1^j\otimes\eta_1^j-\eta_2^j\otimes\eta_2^j)\right)\\
&\quad-(\Id-2\frac{\nabla\otimes\nabla}{\Delta})p_c\\
&=\bar R_{k,c}+\tilde{\mathcal I}_{k,c}^1+\tilde{\mathcal I}_{k,c}^2+\tilde{\mathcal I}_{k,c}^3-(\Id-2\frac{\nabla\otimes\nabla}{\Delta})p_c,
\end{split}
\]
where
\[
\tilde{\mathcal I}_{k,c}^1:=
-\sum_{j\in\Lambda_B}N_{j,k}(e^{-N_{j,k}^2(t-t_k)}-e^{-N_{j,k}^2t})\newD\psi_{j,k,c},
\]
\[
\tilde{\mathcal I}_{k,c}^2:=
\sum_{j\in\Lambda_B}N_{j,k}(e^{-N_{j,k}^2(t-t_k)}-e^{(t-t_k)\Delta})\newD\psi_{j,k,c},
\]
\[
\tilde{\mathcal I}_{k,c}^3:=
\frac12e^{(t-t_k)\Delta}\sum_{j\in\Lambda_B}e^{-2N_{j,k+1}^2t_k}
\mathbb Q\!\left(a_{j,k+1,B}^2(\eta_1^j\otimes\eta_1^j-\eta_2^j\otimes\eta_2^j)\right),
\]
and
\[
-p_c=\sum_{j\in\Lambda_B}N_{j,k}e^{(t-t_k)\Delta}\div\psi_{j,k,c}.
\]
Moreover,
\[
\tilde{\mathcal I}_{k,c}^2=
-\phi_k*\sum_{j\in\Lambda_B}N_{j,k}^{-1}\newD
\big[e^{(t-t_k)\Delta},a_{j,k,B}\varphi_{j,k}\big]
\big(\eta_1^j\sin(N_{j,k}\eta^j\cdot x)\big),
\]
and
\[
-p_c=
\phi_k*\sum_{j\in\Lambda_B}N_{j,k}^{-1}e^{(t-t_k)\Delta}
\big(\eta_1^j\cdot\nabla(a_{j,k,B}\varphi_{j,k})\,\eta_1^j\sin(N_{j,k}\eta^j\cdot x)\big).
\]
Hence, by \eqref{heat-decay-estimate} and Calder\'on--Zygmund boundedness of $\Delta^{-1}\nabla\otimes\nabla$,
\[
\begin{split}
\|\nabla^n(\Id-2\frac{\nabla\otimes\nabla}{\Delta})p_c\|_\infty
\lesssim \sum_{j\in\Lambda_B}\Big(
M_{j,k}N_{j,k}^{n-1+\varepsilon}e^{-N_{j,k}^2(t-t_k)/4}
+N_{j,k}^{n-m+\varepsilon}M_{j,k}^m\Big).
\end{split}
\]
Also,
\[
\|\nabla^n\tilde{\mathcal I}_{k,c}^1\|_\infty
\lesssim \sum_{j\in\Lambda_B}N_{j,k}^{n}e^{-N_{j,k}^2t}(e^{N_{j,k}^2t_k}-1)
\lesssim \sum_{j\in\Lambda_B}N_{j,k}^{n+2}t_k e^{-N_{j,k}^2t},
\]
\[
\|\nabla^n\tilde{\mathcal I}_{k,c}^2\|_\infty
\lesssim \sum_{j\in\Lambda_B}N_{j,k}^{n}
\Big(M_{j,k}N_{j,k}^{-1}e^{-N_{j,k}^2t/4}+N_{j,k}^{-m}M_{j,k}^m\Big)
\]
by Lemma~\ref{l:commutator}, and
\[
\|\nabla^n\tilde{\mathcal I}_{k,c}^3\|_\infty
\lesssim \sum_{j\in\Lambda_B}N_{J_d,k}^{n+\varepsilon}e^{-2N_{j,k+1}^2t_k}.
\]
For the late-time part,
\[
\begin{split}
\|\nabla^n\mathcal I_{k,c}^{>t_k}\|_\infty
&\le \int_{t_k}^{t}\sum_{j\in\Lambda_B}N_{j,k+1}^2e^{-2N_{j,k+1}^2s}
\Big\|\nabla^n\mathbb Q\!\left(a_{j,k+1,B}^2(\eta_1^j\otimes\eta_1^j-\eta_2^j\otimes\eta_2^j)\right)\Big\|_\infty ds\\
&\lesssim \sum_{j\in\Lambda_B}N_{J_d,k}^{n+\varepsilon}e^{-2N_{j,k+1}^2t_k}.
\end{split}
\]

Combining all terms, for $t\ge t_k$,
\[
\begin{split}
\|\nabla^n(\bar R_{k,c}-\mathcal I_{k,c})\|_\infty
&\le \|\nabla^n(\bar R_{k,c}-\tilde{\mathcal I}_{k,c}^{<t_k})\|_\infty
+\|\nabla^n(\tilde{\mathcal I}_{k,c}^{<t_k}-\mathcal I_{k,c}^{<t_k})\|_\infty
+\|\nabla^n\mathcal I_{k,c}^{>t_k}\|_\infty\\
&\lesssim_{\varepsilon,m}\sum_{j\in\Lambda_B}\Big(
N_{j,k}^{n}e^{-N_{j,k}^2t/4}(N_{j,k}^2t_k+M_{j,k}N_{j,k}^{-1})
+N_{j,k}^{n-m}M_{j,k}^m\Big)\\
&\quad +\sum_{j\in\Lambda_B}\Big(
M_{j,k}N_{j,k}^{n-1+\varepsilon}e^{-N_{j,k}^2(t-t_k)/4}
+N_{j,k}^{n-m+\varepsilon}M_{j,k}^m\Big)\\
&\quad +\sum_{j\in\Lambda_B}N_{J_d,k}^{n+\varepsilon}e^{-2N_{j,k+1}^2t_k}
+t_kN_{J_d,k}^{n+2+\varepsilon}.
\end{split}
\]
Again, the choice of $\gamma=\frac12$ in \eqref{M_definition} and $t_k=N_{J_d,k}^{-4}$ gives
\[
N_{j,k}^{n}e^{-N_{j,k}^2t/4}(N_{j,k}^2t_k+M_{j,k}N_{j,k}^{-1})
\lesssim N_{j,k}^{n-\frac12}e^{-N_{j,k}^2t/4},
\]
and the same choice makes all non-decaying terms arbitrarily small by taking $m,A,b$ large enough.
For $t\in[0,t_k]$, \eqref{eq:bound_before_t_k_2D_c}--\eqref{eq:bound_before_t_k_3D_c} imply
\[
\|\nabla^n(\bar R_{k,c}-\mathcal I_{k,c})\|_{L^\infty_{x,t}}
\lesssim_\varepsilon N_{J_d,k}^{n+\varepsilon}
=t_k^{-(n+\varepsilon)/4}.
\]
Hence, for any $\varepsilon_0>0$ and $0<\alpha<\frac1{10}$, we can choose $A,b,m$ sufficiently large and $\varepsilon$ sufficiently small so that
\[
\|\nabla^n(\bar R_{k,c}(t)-\mathcal I_{k,c}(t))\|_\infty
\le \varepsilon_0N_{1,k}^{-\alpha}(t^{-n/2+\alpha}+1),\qquad n=1,\dots,\bar n.
\]
For $n=0$, the claimed two bounds in \eqref{step2} follow directly from
\eqref{eq:bound_before_t_k_2D_c} and \eqref{eq:bound_before_t_k_3D_c} on $[0,t_k]$,
together with the $t\ge t_k$ estimate above.

{\textbf{Step III.}}
The goal of the last step is to show 
\begin{equation}\label{step3}
\begin{split}
\|\nabla^n(\bar H_k(t) - \mathcal{I}_{k,B}(t))\|_{\infty} &\leq \varepsilon_0 N^{-\alpha}_{1,k}(t^{-\frac{n}{2} +\alpha}+1), \qquad  n=1,2,\dots,\bar n,\\
\|\bar H_k(t) - \mathcal{I}_{k,B}(t)\|_{\infty} &\lesssim_\varepsilon \varepsilon_0N_{1,k}^{-\alpha}+\mathbbm{1}_{t\leq t_k}N_{J_d, k}^{\varepsilon}, \qquad \text{if} \qquad d=2,\\
\|\bar H_k(t) - \mathcal{I}_{k,B}(t)\|_{\infty} &\lesssim \varepsilon_0N_{1,k}^{-\alpha}+\mathbbm1_{t\leq t_k}, \qquad \text{if} \qquad d\geq 3.
\end{split}
\end{equation}
As before, we start with showing rough estimates on $[0,t_k]$. For $n\ge0$,
\[
\begin{split}
\|\nabla^n\mathcal I_{k,B}\|_{\infty}
&\le \int_0^t\sum_{j\in\Lambda_B}N_{j,k+1}^2e^{-2N_{j,k+1}^2s}
\Big\|\nabla^n\mathbb Q_s\!\Big(a_{j,k+1,B}^2(\eta_1^j\otimes\eta_2^j-\eta_2^j\otimes\eta_1^j)\Big)\Big\|_\infty ds\\
&\lesssim_\varepsilon \int_0^t\sum_{j\in\Lambda_B}N_{j,k+1}^2e^{-2N_{j,k+1}^2s}
\|\nabla^n(a_{j,k+1,B}^2)\|_{C^\varepsilon}\,ds\\
&\lesssim_\varepsilon N_{J_d,k}^{n+\varepsilon},
\end{split}
\]
where we used \eqref{eq:a_bounds} again. We improve it in dimension $d\ge3$. Since $N_{j,k}=N_{1,k}$ for all $j$, we have by the third identity in \eqref{a-convex-integration-identity}, 
\[
\sum_{j\in\Lambda_B}a_{j,k+1,B}^2(\eta_1^j\otimes\eta_2^j-\eta_2^j\otimes\eta_1^j)
=2\mathcal D_s\sum_{j\in\Lambda_B}N_{1,k}\psi_{j,k,B}.
\]
It then follows 
\[
\begin{split}
&\quad\nabla^n\mathbb Q_s\sum_{j\in\Lambda_B}a_{j,k+1,B}^2(\eta_1^j\otimes\eta_2^j-\eta_2^j\otimes\eta_1^j)\\
&=\nabla^n\mathcal R_s\mathbb P\div\left(2\mathcal D_s\sum_{j\in\Lambda_B}N_{1,k}\psi_{j,k,B}\right)\\
&=2\nabla^n\mathcal R_s\Delta\mathbb P\sum_{j\in\Lambda_B}N_{1,k}\psi_{j,k,B}\\
&=2\nabla^n\mathcal R_s\Delta\mathbb P\sum_{j\in\Lambda_B}N_{1,k}^{-1}\phi_k*(a_{j,k,B}\varphi_{j,k}\eta_2^j\sin(N_{1,k}\eta^j\!\cdot x))\\
&=2\nabla^n\mathcal R_s\Delta\sum_{j\in\Lambda_B}N_{1,k}^{-1}\phi_k*(a_{j,k,B}\varphi_{j,k}\eta_2^j\sin(N_{1,k}\eta^j\!\cdot x))\\
&\quad-2\nabla^n\mathcal R_s\nabla\sum_{j\in\Lambda_B}N_{1,k}^{-1}\phi_k*(\nabla(a_{j,k,B}\varphi_{j,k})\!\cdot\!\eta_2^j\sin(N_{1,k}\eta^j\!\cdot x)).
\end{split}
\]
Since $\mathcal R_s\Delta$ is also local, this yields
\[
\Big\|\nabla^n\mathbb Q_s\sum_{j\in\Lambda_B}a_{j,k+1,B}^2(\eta_1^j\otimes\eta_2^j-\eta_2^j\otimes\eta_1^j)\Big\|_\infty
\lesssim N_{J_d,k}^n,
\]
and then
\[
\|\nabla^n\mathcal I_{k,B}\|_\infty\lesssim N_{J_d,k}^n,\qquad d\ge3.
\]
On the other hand, by the definition of $\bar H_k$ in \eqref{tensor-app},
\[
\|\nabla^n\bar H_k(t)\|_\infty
\lesssim \sum_{j\in\Lambda_B}N_{j,k}^ne^{-N_{j,k}^2t}
\lesssim N_{J_d,k}^n.
\]
Therefore, for $t\in[0,t_k]$,
\begin{equation}\label{eq:bound_before_t_k_2D_B}
\|\nabla^n(\bar H_k(t)-\mathcal I_{k,B}(t))\|_\infty
\lesssim_\varepsilon N_{J_d,k}^{n+\varepsilon},\qquad d=2,
\end{equation}
and
\begin{equation}\label{eq:bound_before_t_k_3D_B}
\|\nabla^n(\bar H_k(t)-\mathcal I_{k,B}(t))\|_\infty
\lesssim N_{J_d,k}^{n},\qquad d\ge3.
\end{equation}

For $t\ge t_k$, we analogously split
\[
\begin{split}
\mathcal I_{k,B}(t)
&=-\int_0^{t_k}e^{(t-s)\Delta}\sum_{j\in\Lambda_B}N_{j,k+1}^2e^{-2N_{j,k+1}^2s}
\mathbb Q_s\!\left(a_{j,k+1,B}^2(\eta_1^j\otimes\eta_2^j-\eta_2^j\otimes\eta_1^j)\right)\,ds\\
&\quad-\int_{t_k}^{t}e^{(t-s)\Delta}\sum_{j\in\Lambda_B}N_{j,k+1}^2e^{-2N_{j,k+1}^2s}
\mathbb Q_s\!\left(a_{j,k+1,B}^2(\eta_1^j\otimes\eta_2^j-\eta_2^j\otimes\eta_1^j)\right)\,ds\\
&=:\mathcal I_{k,B}^{<t_k}(t)+\mathcal I_{k,B}^{>t_k}(t).
\end{split}
\]
Define
\[
\tilde{\mathcal I}_{k,B}^{<t_k}
:=-e^{(t-t_k)\Delta}\int_0^{t_k}\sum_{j\in\Lambda_B}N_{j,k+1}^2e^{-2N_{j,k+1}^2s}
\mathbb Q_s\!\left(a_{j,k+1,B}^2(\eta_1^j\otimes\eta_2^j-\eta_2^j\otimes\eta_1^j)\right)\,ds.
\]
Estimate \eqref{eq:a_bounds} implies
\[
\begin{split}
&\quad\|\nabla^n(e^{t\Delta}-\Id)\mathbb Q_s(a_{j,k+1,B}^2(\eta_1^j\otimes\eta_2^j-\eta_2^j\otimes\eta_1^j))\|_\infty\\
&\lesssim t\|\nabla^n\Delta\mathbb Q_s(a_{j,k+1,B}^2(\eta_1^j\otimes\eta_2^j-\eta_2^j\otimes\eta_1^j))\|_\infty\\
&\lesssim_\varepsilon t\,N_{J_d,k}^{n+2+\varepsilon},
\end{split}
\]
and thus
\[
\begin{split}
\|\nabla^n(\mathcal I_{k,B}^{<t_k}-\tilde{\mathcal I}_{k,B}^{<t_k})\|_\infty
&\lesssim_\varepsilon \int_0^{t_k}\sum_{j\in\Lambda_B}N_{j,k+1}^2e^{-2N_{j,k+1}^2s}\,t_kN_{J_d,k}^{n+2+\varepsilon}\,ds\\
&\lesssim t_kN_{J_d,k}^{n+2+\varepsilon}.
\end{split}
\]
By the third identity in \eqref{a-convex-integration-identity}, \eqref{eq:Qs_identities}, and $\mathcal D_s=\mathbb Q_s\mathcal D_s+\Id\,\div$, we get
\begin{equation}\label{eq:quadratic_identity_B}
\frac12\sum_{j\in\Lambda_B}\mathbb Q_s\!\left(a_{j,k+1,B}^2(\eta_1^j\otimes\eta_2^j-\eta_2^j\otimes\eta_1^j)\right)
=\sum_{j\in\Lambda_B}N_{j,k}\Big(\mathcal D_s\psi_{j,k,B}-\Id\,\div\psi_{j,k,B}\Big).
\end{equation}
Therefore, we deduce
\[
\begin{split}
\tilde{\mathcal I}_{k,B}^{<t_k}
&=-\frac12e^{(t-t_k)\Delta}\sum_{j\in\Lambda_B}(1-e^{-2N_{j,k+1}^2t_k})
\mathbb Q_s\!\left(a_{j,k+1,B}^2(\eta_1^j\otimes\eta_2^j-\eta_2^j\otimes\eta_1^j)\right)\\
&=-\sum_{j\in\Lambda_B}N_{j,k}e^{(t-t_k)\Delta}\mathcal D_s\psi_{j,k,B}\\
&\quad+\frac12e^{(t-t_k)\Delta}\sum_{j\in\Lambda_B}e^{-2N_{j,k+1}^2t_k}
\mathbb Q_s\!\left(a_{j,k+1,B}^2(\eta_1^j\otimes\eta_2^j-\eta_2^j\otimes\eta_1^j)\right)\\
&\quad+\Id\,q_{k,B}\\
&=\bar H_k+\tilde{\mathcal I}_{k,B}^1+\tilde{\mathcal I}_{k,B}^2+\tilde{\mathcal I}_{k,B}^3+\Id\,q_{k,B},
\end{split}
\]
where
\[
\tilde{\mathcal I}_{k,B}^1:=
-\sum_{j\in\Lambda_B}N_{j,k}(e^{-N_{j,k}^2(t-t_k)}-e^{-N_{j,k}^2t})\mathcal D_s\psi_{j,k,B},
\]
\[
\tilde{\mathcal I}_{k,B}^2:=
\sum_{j\in\Lambda_B}N_{j,k}(e^{-N_{j,k}^2(t-t_k)}-e^{(t-t_k)\Delta})\mathcal D_s\psi_{j,k,B},
\]
\[
\tilde{\mathcal I}_{k,B}^3:=
\frac12e^{(t-t_k)\Delta}\sum_{j\in\Lambda_B}e^{-2N_{j,k+1}^2t_k}
\mathbb Q_s\!\left(a_{j,k+1,B}^2(\eta_1^j\otimes\eta_2^j-\eta_2^j\otimes\eta_1^j)\right),
\]
and
\[
q_{k,B}:=e^{(t-t_k)\Delta}\sum_{j\in\Lambda_B}N_{j,k}\div\psi_{j,k,B}.
\]
We can further write $\tilde{\mathcal I}_{k,B}^2$ as a commutator
\[
\tilde{\mathcal I}_{k,B}^2=
-\phi_k*\sum_{j\in\Lambda_B}N_{j,k}^{-1}\mathcal D_s
\big[e^{(t-t_k)\Delta},a_{j,k,B}\varphi_{j,k}\big]
\big(\eta_2^j\sin(N_{j,k}\eta^j\cdot x)\big),
\]
and
\[
q_{k,B}=
\phi_k*\sum_{j\in\Lambda_B}N_{j,k}^{-1}e^{(t-t_k)\Delta}
\big(\eta_2^j\cdot\nabla(a_{j,k,B}\varphi_{j,k})\sin(N_{j,k}\eta^j\cdot x)\big).
\]
It then follows from \eqref{heat-decay-estimate},
\[
\begin{split}
\|\nabla^n q_{k,B}\|_\infty
\lesssim \sum_{j\in\Lambda_B}\Big(
M_{j,k}N_{j,k}^{n-1+\varepsilon}e^{-N_{j,k}^2(t-t_k)/4}
+N_{j,k}^{n-m+\varepsilon}M_{j,k}^m\Big).
\end{split}
\]
Applying Lemma~\ref{l:commutator} yields
\[
\|\nabla^n\tilde{\mathcal I}_{k,B}^1\|_\infty
\lesssim \sum_{j\in\Lambda_B}N_{j,k}^{n}e^{-N_{j,k}^2t}(e^{N_{j,k}^2t_k}-1)
\lesssim \sum_{j\in\Lambda_B}N_{j,k}^{n+2}t_ke^{-N_{j,k}^2t},
\]
\[
\|\nabla^n\tilde{\mathcal I}_{k,B}^2\|_\infty
\lesssim \sum_{j\in\Lambda_B}N_{j,k}^{n}
\Big(M_{j,k}N_{j,k}^{-1}e^{-N_{j,k}^2t/4}+N_{j,k}^{-m}M_{j,k}^m\Big)
\]
and
\[
\|\nabla^n\tilde{\mathcal I}_{k,B}^3\|_\infty
\lesssim\sum_{j\in\Lambda_B}N_{J_d,k}^{n+\varepsilon}e^{-2N_{j,k+1}^2t_k}.
\]
On the other hand, we have
\[
\begin{split}
\|\nabla^n\mathcal I_{k,B}^{>t_k}\|_\infty
&\le \int_{t_k}^{t}\sum_{j\in\Lambda_B}N_{j,k+1}^2e^{-2N_{j,k+1}^2s}
\Big\|\nabla^n\mathbb Q_s\!\left(a_{j,k+1,B}^2(\eta_1^j\otimes\eta_2^j-\eta_2^j\otimes\eta_1^j)\right)\Big\|_\infty ds\\
&\lesssim \sum_{j\in\Lambda_B}N_{J_d,k}^{n+\varepsilon}e^{-2N_{j,k+1}^2t_k}.
\end{split}
\]

Combining all terms, for $t\ge t_k$,
\[
\begin{split}
\|\nabla^n(\bar H_k-\mathcal I_{k,B})\|_\infty
&\le \|\nabla^n(\bar H_k-\tilde{\mathcal I}_{k,B}^{<t_k})\|_\infty
+\|\nabla^n(\tilde{\mathcal I}_{k,B}^{<t_k}-\mathcal I_{k,B}^{<t_k})\|_\infty
+\|\nabla^n\mathcal I_{k,B}^{>t_k}\|_\infty\\
&\lesssim_{\varepsilon,m}\sum_{j\in\Lambda_B}\Big(
N_{j,k}^{n}e^{-N_{j,k}^2t/4}(N_{j,k}^2t_k+M_{j,k}N_{j,k}^{-1})
+N_{j,k}^{n-m}M_{j,k}^m\Big)\\
&\quad +\sum_{j\in\Lambda_B}\Big(
M_{j,k}N_{j,k}^{n-1+\varepsilon}e^{-N_{j,k}^2(t-t_k)/4}
+N_{j,k}^{n-m+\varepsilon}M_{j,k}^m\Big)\\
&\quad +\sum_{j\in\Lambda_B}N_{J_d,k}^{n+\varepsilon}e^{-2N_{j,k+1}^2t_k}
+t_kN_{J_d,k}^{n+2+\varepsilon}.
\end{split}
\]
Again, thanks to the choice of the parameters $\gamma, m, A,b$ and $t_k$, we have
\[
\|\nabla^n(\bar H_k-\mathcal I_{k,B})\|_{L^\infty_{x,t}}
\lesssim N_{j,k}^{n-\frac12}e^{-N_{j,k}^2t/4}, \quad t\geq t_k.
\]
For $t\in[0,t_k]$, \eqref{eq:bound_before_t_k_2D_B} and \eqref{eq:bound_before_t_k_3D_B} imply
\[
\|\nabla^n(\bar H_k-\mathcal I_{k,B})\|_{L^\infty_{x,t}}
\lesssim_\varepsilon N_{J_d,k}^{n+\varepsilon}
=t_k^{-(n+\varepsilon)/4}.
\]
Hence, as before, 
\[
\|\nabla^n(\bar H_k(t)-\mathcal I_{k,B}(t))\|_\infty
\le \varepsilon_0N_{1,k}^{-\alpha}(t^{-n/2+\alpha}+1),\qquad n=1,\dots,\bar n
\]
for any $\varepsilon_0>0$ and $0<\alpha<\frac1{10}$. 
For $n=0$, the claimed two bounds in \eqref{step3} follow directly from
\eqref{eq:bound_before_t_k_2D_B} and \eqref{eq:bound_before_t_k_3D_B} on $[0,t_k]$,
together with the $t\ge t_k$ estimate above.

Finally, using $\bar R_k=\bar R_{k,u}+\bar R_{k,c}$ and Step I--Step II, we get
\[
\|\nabla^n(\bar R_k-\mathcal I_{k,u}-\mathcal I_{k,c})\|_\infty
\le \|\nabla^n(\bar R_{k,u}-\mathcal I_{k,u})\|_\infty+\|\nabla^n(\bar R_{k,c}-\mathcal I_{k,c})\|_\infty,
\]
with the corresponding $n=0$ bounds in $d=2$ and $d\ge3$. Together with Step III, this proves all claims of Proposition~\ref{barv-I_estimate_proposition}.
\end{proof}

We now estimate the discrepancy between $R_k$ and $\bar R_k$ by combining the previous proposition with the decomposition derived above.

\begin{proposition}\label{difference_estimate_proposition}
For any $\varepsilon_0>0$, sufficiently small $\alpha>0$, and $\bar n \in \mathbb{N}$, we have
\[
\|\nabla^n(R_k(t)- \bar R_k(t) )\|_{\infty} \leq_n \varepsilon_0 N_{1,k}^{-\alpha} (t^{-\frac{n}{2}+\alpha}+1), \qquad n=1,2,\dots, \bar n,
\]
\[
\|R_k(t)- \bar R_k(t)\|_{\infty} \lesssim_\varepsilon \varepsilon_0N_{1,k}^{-\alpha}+\mathbbm{1}_{t\leq t_k}N_{J_d, k}^{\varepsilon}, \qquad \text{if} \qquad d=2,
\]
\[
\|R_k(t)- \bar R_k(t)\|_{\infty} \lesssim \varepsilon_0N_{1,k}^{-\alpha}+\mathbbm1_{t\leq t_k}, \qquad \text{if} \qquad d\geq 3
\]
provided the parameters $A$ and $b$ are sufficiently large.
\end{proposition}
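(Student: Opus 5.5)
By the triangle inequality,
\[
R_k-\bar R_k=(R_k-\mathcal I_{k,u}-\mathcal I_{k,c})+(\mathcal I_{k,u}+\mathcal I_{k,c}-\bar R_k).
\]
Proposition~\ref{barv-I_estimate_proposition} already bounds the second bracket by the claimed quantities. The task therefore reduces to showing that $R_k-\mathcal I_{k,u}-\mathcal I_{k,c}$ satisfies the same bounds, possibly with a smaller constant times $\varepsilon_0 N_{1,k}^{-\alpha}(t^{-n/2+\alpha}+1)$.

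\textbf{Expanding the nonlinearity.} Using \eqref{tensor-RH} and $\mathbb Q=\mathcal R\mathbb P\div$ from \eqref{eq:Q_identities}, I will write
\[
R_k=-\int_0^t e^{(t-s)\Delta}\mathbb Q(\bar v_{k+1}\otimes\bar v_{k+1}-\bar h_{k+1}\otimes\bar h_{k+1})\,ds .
\]
I then expand the tensor products over pairs $(j,j')$ with $\bar v_{j,k+1}=-N_{j,k+1}e^{-N_{j,k+1}^2s}\Delta\psi_{j,k+1}$. Up to the mollification, $-\Delta\psi_{j,k+1}\approx a_{j,k+1}\varphi_{j,k+1}\eta_1^j\sin(N_{j,k+1}\eta^j\cdot x)$ plus terms of relative size $M_{j,k+1}/N_{j,k+1}$. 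The diagonal terms $j=j'$ give
\[
N_{j,k+1}^2e^{-2N_{j,k+1}^2s}\,a_{j,k+1}^2\varphi_{j,k+1}^2\sin^2(N_{j,k+1}\eta^j\cdot x)\,\eta_1^j\otimes\eta_1^j .
\]
For $j\in\Lambda_B$ they give the analogous $\eta_1\otimes\eta_1-\eta_2\otimes\eta_2$ combination coming from the $\psi_c$ and $\psi_B$ pieces. There are also $\psi_c\otimes\psi_B$ cross terms in $\bar v\otimes\bar v$ from directions $j\in\Lambda_B$, with a $\psi_u\otimes\psi_c$ counterpart at different $j$. Writing $\sin^2=\frac12(1-\cos(2N\eta\cdot x))$, the mean part equals $\tfrac12 a^2\varphi^2$. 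Since $\varphi_{j,k+1}^2$ has unit normalized average and frequency $M_{j,k+1}\ll N_{j,k}$ scale, I will replace $\varphi^2$ by its average. The low frequencies of $a^2$ lie at scale $N_{J_d,k}\ll M_{j,k+1}$, so the oscillatory part of $\varphi^2$ only produces frequencies $\gtrsim M_{j,k+1}$. After applying $\mathbb Q$ and the heat flow on $[t_k,t]$ this part is negligible, and it is integrable on $[0,t_k]$ with gain $t_k N^2$. This produces exactly $\mathcal I_{k,u}+\mathcal I_{k,c}$ (the factor $\tfrac12$ is absorbed by the normalization of $\varphi$ and the definition of $\mathcal I$).

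\textbf{Error terms.} The remaining contributions are of four types.
\begin{enumerate}[(i),left=1em]
\item The $\cos(2N_{j,k+1}\eta^j\cdot x)$ high-frequency parts. These have frequency $\sim N_{j,k+1}$, so $e^{(t-s)\Delta}\mathbb Q$ gives a factor $N_{j,k+1}^{-1}e^{-(t-s)N_{j,k+1}^2}$. Integrating $N^2e^{-2N^2s}$ against this gives $\lesssim N_{j,k+1}^{-1}$ times the gradient loss, which is small by a power of $N_{1,k+1}$.
\item Off-diagonal interactions $j\neq j'$. In $d=2$ these are handled by the scale separation \eqref{scale-separation}, giving a factor $N_{j',k+1}/N_{j,k+1}\le A^{-c}$. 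In $d\ge3$ they vanish because the pipes $\mathcal C_{j,k+1}(4\delta_0)$ are disjoint.
\item Cross terms within the same direction $j\in\Lambda_B$ in $\bar v\otimes\bar v$, namely $\eta_1^j\otimes\eta_2^j$-type from $\psi_c$ and $\psi_B$. These do not appear in $R_k$ at all, since $\bar v$ contains only the $\psi_c$ part and $\bar h$ only the $\psi_B$ part. The skew combination appears only in $H_k$, so no error arises here.
\item Lower order corrections from $\mathbb P$, the mollifier $\phi_{k+1}$, and derivatives falling on $a\varphi$. These carry factors $M_{j,k+1}/N_{j,k+1}$ or $\ell_{k+1}$-type gains.
\end{enumerate}
Each error will be estimated in the two time regimes as in Proposition~\ref{barv-I_estimate_proposition}. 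For $t\le t_k$ I use crude $N_{J_d,k}^{n+\varepsilon}$ bounds, made sharp in $d\ge3$ by the local structure. For $t\ge t_k$ I use exponential decay from $e^{-2N_{j,k+1}^2 t_k}$.

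\textbf{Main obstacle.} I expect the main obstacle to be the $\varphi^2$ replacement together with the high-frequency term in (i) for derivatives $n\ge1$ at small $t$. There one needs a bound of the form $t^{-n/2+\alpha}$ rather than $N_{j,k+1}^n$. I plan to handle it by interpolating the heat smoothing $\|\nabla^n e^{\tau\Delta}P_{\gtrsim M}f\|_\infty\lesssim \tau^{-n/2}e^{-c\tau M^2}\|f\|_\infty$ against the exact time integral $\int_0^t N^2e^{-2N^2s}\,ds$. The parameters $A,b$ and $\gamma$ are then chosen so that every power loss is beaten by $A^{-c}$, yielding the factor $\varepsilon_0 N_{1,k}^{-\alpha}$. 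The $n=0$ bounds then follow by combining with the $n=0$ statements of Proposition~\ref{barv-I_estimate_proposition}.
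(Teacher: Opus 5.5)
Your decomposition matches the paper's: $R_k=\mathcal I_k+\mathcal J_k^1+\mathcal J_k^2+\mathcal E_k$, combined with Proposition~\ref{barv-I_estimate_proposition}. Your treatment of the off-diagonal terms and the $\Lambda_B$ cross terms is fine.

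\textbf{The gap.} It lies in the oscillatory remainder (the paper's $\mathcal J_k^1$), specifically its part at frequency $\sim M_{j,k+1}$ coming from $\varphi_{j,k+1}^2-\overline{\varphi_{j,k+1}^2}$.
Set $G_j:=\mathbb Q\big(a_{j,k+1}^2(\varphi_{j,k+1}^2-\overline{\varphi_{j,k+1}^2})\eta_1^j\otimes\eta_1^j\big)$. Since $\mathbb Q$ has order zero, without further structure the only bound you have is $\|G_j\|_\infty\lesssim 1$.
\begin{enumerate}[(a),left=1em]
\item The weight $\int_0^t N_{j,k+1}^2e^{-2N_{j,k+1}^2s}\,ds$ has already saturated once $t\gg N_{j,k+1}^{-2}$. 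So at $t\approx M_{j,k+1}^{-2}$ (still $\ll t_k$ for $b$ large), the contribution of $G_j$ to $\nabla^nR_k$ is of order $\|G_j\|_\infty M_{j,k+1}^n\approx \|G_j\|_\infty t^{-n/2}$.
\item The target there is $\varepsilon_0N_{1,k}^{-\alpha}t^{-n/2+\alpha}$. You would therefore need $\|G_j\|_\infty\lesssim \varepsilon_0N_{1,k}^{-\alpha}M_{j,k+1}^{-2\alpha}$.
\item Interpolating heat smoothing against the time integral, or tuning $A,b,\gamma$, cannot create that prefactor. The ``gain $t_kN^2$'' is not a gain, since $t_kN_{j,k+1}^2\gg1$.
\item Your two-regime scheme keyed to $t_k$ is misplaced for these errors. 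They live at $N_{j,k+1}^{-2}\lesssim t\lesssim M_{j,k+1}^{-2}$, where crude bounds are $M_{j,k+1}^n$ or $N_{j,k+1}^n$, not $N_{J_d,k}^{n+\varepsilon}$, and $e^{-2N_{j,k+1}^2t_k}$ plays no role.
\end{enumerate}
Only the $n=0$ bounds survive, because $\mathbbm 1_{t\le t_k}$ absorbs an $O(1)$ error.

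\textbf{The missing idea.} The paper writes $\mathbb Q=\mathcal R\mathbb P\div$ and takes the divergence first.
\begin{enumerate}[(a),left=1em]
\item Since $\eta_1^j\perp\eta^j$, the divergence never hits the phase. By \eqref{div-osc}, only $\nabla a_{j,k+1}^2=O(N_{J_d,k})$ survives.
\item Lemma~\ref{l:oscillation_estimate}, applied to the mean-zero, $2\pi/M_{j,k+1}$-periodic factor $\varphi_{j,k+1}^2\sin^2(N_{j,k+1}\eta^j\cdot x)-1$, then supplies $M_{j,k+1}^{-1}$.
\item The result is the prefactor $N_{J_d,k}^{1+\varepsilon}M_{j,k+1}^{-1}\le A^{-c}$, which beats $t^{-\alpha}$.
\end{enumerate}
Your item (i) silently needs the same structure: $e^{(t-s)\Delta}\mathbb Q$ alone does not produce $N_{j,k+1}^{-1}$.

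\textbf{A caution about \eqref{div-osc}.} As written it drops the term $a_{j,k+1}^2\sin^2(N_{j,k+1}\eta^j\cdot x)\,(\eta_1^j\cdot\nabla\varphi_{j,k+1}^2)\,\eta_1^j$, because only $\eta^j\cdot\nabla\varphi_{j,k+1}=0$ is assumed.
\begin{enumerate}[(a),left=1em]
\item In $d=2$ this term equals $a^2\sin^2\nabla(\varphi^2)$. After moving the gradient off $\varphi^2$, $\mathbb P$ leaves only $\nabla a^2$-terms, plus a frequency-$2N_{j,k+1}$ term of size $O(M_{j,k+1}/N_{j,k+1})$ after $\mathcal R\mathbb P$. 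So the gain survives.
\item For pipe cutoffs in $d\ge3$, $(\eta_1^j\cdot\nabla\varphi^2)\eta_1^j$ is not a gradient and $\mathbb P$ does not remove it; $G_j$ is then genuinely of order one. That case needs an additional argument, which neither your plan nor \eqref{div-osc} supplies.
\end{enumerate}
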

\begin{proof}
Recall
\[
\begin{split}
\psi_{j,k,u}(x)&= N_{j,k}^{-2}\phi_k*(a_{j,k,u}(x)\varphi_{j,k}(x)\eta_1^j\sin(N_{j,k}\eta^j\cdot x)),\\
\psi_{j,k,B}(x)&= N_{j,k}^{-2}\phi_k*(a_{j,k,B}(x)\varphi_{j,k}(x)\eta_2^j\sin(N_{j,k}\eta^j\cdot x)),\\
\psi_{j,k,c}(x)&=N_{j,k}^{-2}\phi_k*(a_{j,k,B}(x)\varphi_{j,k}(x)\eta_1^j\sin(N_{j,k}\eta^j\cdot x))
\end{split}
\]
for $k\geq 1$.
We compute
\begin{equation}\label{Delta-psi-u}
\begin{split}
\Delta \psi_{j,k,u}(x)
&=-a_{j,k,u}(x)\varphi_{j,k}(x)\sin(N_{j,k}\eta^j\cdot x)\eta_1^j\\
&\quad+2N_{j,k}^{-1}\nabla(a_{j,k,u}\varphi_{j,k})\cdot\eta^j\cos(N_{j,k}\eta^j\cdot x)\eta_1^j\\
&\quad +N_{j,k}^{-2}\Delta (a_{j,k,u}\varphi_{j,k})\sin(N_{j,k}\eta^j\cdot x)\eta_1^j\\
&\quad +\Delta(\psi_{j,k,u}-N_{j,k}^{-2}a_{j,k,u}\varphi_{j,k}\eta_1^j\sin(N_{j,k}\eta^j\cdot x)),
\end{split}
\end{equation}
where the first term is the major term and the other three are lower order errors (note the last term is the error coming from removing the mollification). We can compute $\Delta \psi_{j,k,B}$ and $\Delta \psi_{j,k,c}$ in a similar way.

We write 
\begin{equation}\label{bar-v-k1}
\bar v_{k+1}\otimes \bar v_{k+1}
=\sum_{j\in\Lambda_u\cup\Lambda_B}\bar v_{j,k+1}\otimes \bar v_{j,k+1}+\sum_{j,j'\in \Lambda_u\cup\Lambda_B, j\neq j'}\bar v_{j,k+1}\otimes \bar v_{j',k+1},
\end{equation}
\begin{equation}\label{bar-h-k1}
\bar h_{k+1}\otimes \bar h_{k+1}
=\sum_{j\in\Lambda_B}\bar h_{j,k+1}\otimes \bar h_{j,k+1}+\sum_{j,j'\in \Lambda_B, j\neq j'}\bar h_{j,k+1}\otimes \bar h_{j',k+1},
\end{equation}
and hence
\begin{equation}\label{bar-v-minus-k1}
\begin{split}
&\quad\bar v_{k+1}\otimes \bar v_{k+1}-\bar h_{k+1}\otimes \bar h_{k+1}\\
&=\sum_{j\in\Lambda_u}\bar v_{j,k+1}\otimes \bar v_{j,k+1}+\sum_{j\in\Lambda_B}(\bar v_{j,k+1}\otimes \bar v_{j,k+1}-\bar h_{j,k+1}\otimes \bar h_{j,k+1})\\
&\quad+\sum_{j,j'\in \Lambda_u\cup\Lambda_B, j\neq j'}\bar v_{j,k+1}\otimes \bar v_{j',k+1}
-\sum_{j,j'\in \Lambda_B, j\neq j'}\bar h_{j,k+1}\otimes \bar h_{j',k+1}.
\end{split}
\end{equation}
Applying \eqref{Delta-psi-u}, we further compute the first term in \eqref{bar-v-minus-k1}
\begin{equation}\label{unidirectional_terms}
\begin{split}
\sum_{j\in\Lambda_u}\bar v_{j,k+1}\otimes \bar v_{j,k+1}&=\sum_{j\in\Lambda_u} N_{j,k+1}^2 e^{-2N_{j,k+1}
^2t} \Delta \psi_{j,k+1,u}\otimes \Delta \psi_{j,k+1,u}\\
&=\sum_{j\in\Lambda_u} N_{j,k+1}^2 e^{-2N_{j,k+1}
^2t} a_{j,k+1,u}^2 \eta_1^j\otimes \eta_1^j\\
&\hspace{-8em}+ \sum_{j\in\Lambda_u} N_{j,k+1}^2 e^{-2N_{j,k+1}
^2t} a_{j,k+1,u}^2(\varphi^2_{j,k+1} \sin^2(N_{j,k+1}\eta^j\cdot x) -1)\eta_1^j\otimes \eta_1^j + \mathfrak{E}_{k,u}^1,
\end{split}
\end{equation}
where $\mathfrak{E}_{k,u}^1$ collects error terms with derivatives on $a_{j,k+1}\varphi_{j,k+1}$ and the terms removing the mollifier (the last term in \eqref{Delta-psi-u}).

Similarly, using the corresponding expansions of $\Delta\psi_{j,k+1,c}$ and $\Delta\psi_{j,k+1,B}$, we decompose the second term of \eqref{bar-v-minus-k1}
\begin{equation}\label{unidirectional_terms_c}
\begin{split}
&\sum_{j\in\Lambda_B}\left(\bar v_{j,k+1}\otimes \bar v_{j,k+1}-\bar h_{j,k+1}\otimes \bar h_{j,k+1}\right)\\
&=\sum_{j\in\Lambda_B} N_{j,k+1}^2 e^{-2N_{j,k+1}^2t}
\left(\Delta\psi_{j,k+1,c}\otimes\Delta\psi_{j,k+1,c}-\Delta\psi_{j,k+1,B}\otimes\Delta\psi_{j,k+1,B}\right)\\
&=\sum_{j\in\Lambda_B} N_{j,k+1}^2 e^{-2N_{j,k+1}^2t}
a_{j,k+1,B}^2(\eta_1^j\otimes\eta_1^j-\eta_2^j\otimes\eta_2^j)\\
&\quad+\sum_{j\in\Lambda_B} N_{j,k+1}^2 e^{-2N_{j,k+1}^2t}
a_{j,k+1,B}^2\big(\varphi^2_{j,k+1}\sin^2(N_{j,k+1}\eta^j\cdot x)-1\big)
(\eta_1^j\otimes\eta_1^j-\eta_2^j\otimes\eta_2^j)\\
&\quad+\mathfrak E_{k,c}^1-\mathfrak E_{k,B}^1,
\end{split}
\end{equation}
where $\mathfrak E_{k,c}^1$ and $\mathfrak E_{k,B}^1$ contain lower order terms with derivatives on $a_{j,k+1,B}\varphi_{j,k+1}$ and mollifier-removal errors from the $\psi_{j,k+1,c}$ and $\psi_{j,k+1,B}$ components respectively. We set
\[
\mathfrak E_k^1:=\mathfrak E_{k,u}^1+\mathfrak E_{k,c}^1-\mathfrak E_{k,B}^1.
\]

The third term in \eqref{bar-v-minus-k1} is written as
\begin{equation} \label{different_directions_terms}
\begin{split}
&\quad\sum_{j,j'\in \Lambda_u\cup\Lambda_B, j\neq j'}\bar v_{j,k+1}\otimes \bar v_{j',k+1}\\
&=\sum_{j,j'\in \Lambda_u, j\neq j'} N_{j,k+1}N_{j',k+1} e^{-N_{j,k+1}
^2t-N_{j',k+1}^2t} \Delta \psi_{j,k+1,u}\otimes \Delta \psi_{j',k+1,u}\\
&\quad +\sum_{j,j'\in \Lambda_B, j\neq j'} N_{j,k+1}N_{j',k+1} e^{-N_{j,k+1}
^2t-N_{j',k+1}^2t} \Delta \psi_{j,k+1,c}\otimes \Delta \psi_{j',k+1,c}\\
&\quad+\sum_{j\in \Lambda_u, j'\in\Lambda_B} N_{j,k+1}N_{j',k+1} e^{-N_{j,k+1}
^2t-N_{j',k+1}^2t} \Delta \psi_{j,k+1,u}\otimes \Delta \psi_{j',k+1,c}.
\end{split}
\end{equation}
We continue to expand the first term of \eqref{different_directions_terms} as
\begin{equation} \label{different_directions_terms_1}
\begin{split}
&\quad\sum_{j,j'\in \Lambda_u, j\neq j'} N_{j,k+1}N_{j',k+1} e^{-N_{j,k+1}
^2t-N_{j',k+1}^2t} \Delta \psi_{j,k+1,u}\otimes \Delta \psi_{j',k+1,u}\\
&=\sum_{j,j'\in \Lambda_u, j\neq j'} N_{j,k+1}N_{j',k+1} e^{-N_{j,k+1}
^2t-N_{j',k+1}^2t}a_{j,k+1,u}a_{j',k+1,u}\varphi_{j,k+1}\varphi_{j',k+1}  \\
&\quad\cdot \sin(N_{j,k+1}\eta^j\cdot x)\sin(N_{j',k+1}\eta_{j'}\cdot x)\eta_1^j\otimes \eta_1^{j'}
+\mathfrak{E}_{k,u}^2,
\end{split}
\end{equation}
where $\mathfrak{E}_{k,u}^2$ contains error terms with derivatives of $a_{j,k+1,u}a_{j',k+1,u}\varphi_{j,k+1}\varphi_{j',k+1}$ and mollifier-removal terms. Similarly we expand the second and third terms of \eqref{different_directions_terms} as
\begin{equation} \label{different_directions_terms_2}
\begin{split}
&\quad\sum_{j,j'\in \Lambda_B, j\neq j'} N_{j,k+1}N_{j',k+1} e^{-N_{j,k+1}
^2t-N_{j',k+1}^2t} \Delta \psi_{j,k+1,c}\otimes \Delta \psi_{j',k+1,c}\\
&=\sum_{j,j'\in \Lambda_B, j\neq j'} N_{j,k+1}N_{j',k+1} e^{-N_{j,k+1}
^2t-N_{j',k+1}^2t}a_{j,k+1,B}a_{j',k+1,B}\varphi_{j,k+1}\varphi_{j',k+1}  \\
&\quad\cdot \sin(N_{j,k+1}\eta^j\cdot x)\sin(N_{j',k+1}\eta_{j'}\cdot x)\eta_1^j\otimes \eta_1^{j'}
+\mathfrak{E}_{k,B}^2,
\end{split}
\end{equation}
where $\mathfrak{E}_{k,B}^2$ collects error terms with derivatives of $a_{j,k+1,B}a_{j',k+1,B}\varphi_{j,k+1}\varphi_{j',k+1}$ and mollifier-removal terms, and
\begin{equation} \label{different_directions_terms_3}
\begin{split}
&\quad\sum_{j\in \Lambda_u, j'\in\Lambda_B} N_{j,k+1}N_{j',k+1} e^{-N_{j,k+1}
^2t-N_{j',k+1}^2t} \Delta \psi_{j,k+1,u}\otimes \Delta \psi_{j',k+1,c}\\
&=\sum_{j\in \Lambda_u, j'\in\Lambda_B}  N_{j,k+1}N_{j',k+1} e^{-N_{j,k+1}
^2t-N_{j',k+1}^2t}a_{j,k+1,u}a_{j',k+1,B}\varphi_{j,k+1}\varphi_{j',k+1}  \\
&\quad\cdot \sin(N_{j,k+1}\eta^j\cdot x)\sin(N_{j',k+1}\eta_{j'}\cdot x)\eta_1^j\otimes \eta_1^{j'}
+\mathfrak{E}_{k,c}^2,
\end{split}
\end{equation}
where $\mathfrak{E}_{k,c}^2$ contains error terms with derivatives on $a_{j,k+1,u}a_{j',k+1,B}\varphi_{j,k+1}\varphi_{j',k+1}$ and mollifier-removal terms.

Denote
\[\mathfrak E_k^2:=\mathfrak E_{k,u}^2+\mathfrak E_{k,B}^2+\mathfrak E_{k,c}^2.\]

The last term in \eqref{bar-v-minus-k1} can be written as
\begin{equation} \label{different_directions_terms_bb}
\begin{split}
&\quad\sum_{j,j'\in \Lambda_B, j\neq j'}\bar h_{j,k+1}\otimes \bar h_{j',k+1}\\
&=\sum_{j,j'\in \Lambda_B, j\neq j'} N_{j,k+1}N_{j',k+1} e^{-N_{j,k+1}
^2t-N_{j',k+1}^2t} \Delta \psi_{j,k+1,B}\otimes \Delta \psi_{j',k+1,B}\\
&=\sum_{j,j'\in \Lambda_B, j\neq j'} N_{j,k+1}N_{j',k+1} e^{-N_{j,k+1}
^2t-N_{j',k+1}^2t}a_{j,k+1,B}a_{j',k+1,B}\varphi_{j,k+1}\varphi_{j',k+1}  \\
&\quad\cdot \sin(N_{j,k+1}\eta^j\cdot x)\sin(N_{j',k+1}\eta_{j'}\cdot x)\eta_2^j\otimes \eta_2^{j'}
+\mathfrak{E}_{k,B}^3,
\end{split}
\end{equation}
where $\mathfrak{E}_{k,B}^3$ contains error terms with derivatives on $a_{j,k+1,B}a_{j',k+1,B}\varphi_{j,k+1}\varphi_{j',k+1}$ and mollifier-removal terms. 

We summarize the error terms
\begin{equation}\label{E1+E2terms}
\mathfrak{E}_k^1 +\mathfrak{E}_k^2 -\mathfrak{E}_{k,B}^3=\mathfrak E_{k,u}^1+\mathfrak E_{k,c}^1-\mathfrak E_{k,B}^1+\mathfrak E_{k,u}^2+\mathfrak E_{k,B}^2+\mathfrak E_{k,c}^2-\mathfrak{E}_{k,B}^3.
\end{equation}
Denote
\[\mathfrak E_{k,u}^1= \sum_{j\in\Lambda_u} N_{j,k+1}^2 e^{-2N_{j,k+1}
^2t} E_{j,k,u}+\mathfrak E_{k,u,m}^1\]
with $E_{j,k,u}$ containing the terms with derivatives of $a_{j,k+1,u}\varphi_{j,k+1}$ and $\mathfrak E_{k,u,m}^1$ collecting the mollification error; and similarly
\[
\begin{split}
\mathfrak E_{k,c}^1&= \sum_{j\in\Lambda_B} N_{j,k+1}^2 e^{-2N_{j,k+1}
^2t} E_{j,k,c}+\mathfrak E_{k,c,m}^1, \\
\mathfrak E_{k,B}^1&= \sum_{j\in\Lambda_B} N_{j,k+1}^2 e^{-2N_{j,k+1}
^2t} E_{j,k,B}+\mathfrak E_{k,B,m}^1
\end{split}
\]
where $E_{j,k,c}$ and $E_{j,k,B}$ contain the terms with derivatives of $a_{j,k+1,B}\varphi_{j,k+1}$, and $\mathfrak E_{k,c,m}^1$ and $\mathfrak E_{k,B,m}^1$ contain the corresponding mollification errors.

Analogously, we denote
\[\mathfrak E_{k,u}^2=\sum_{j,j'\in\Lambda_u, j\neq j'} N_{j,k+1}N_{j',k+1} e^{-N_{j,k+1}
^2t-N_{j',k+1}^2t}E_{j,j',k,u}+\mathfrak E_{k,u,m}^2,\]
\[\mathfrak E_{k,B}^2=\sum_{j,j'\in\Lambda_B, j\neq j'} N_{j,k+1}N_{j',k+1} e^{-N_{j,k+1}
^2t-N_{j',k+1}^2t}E_{j,j',k,B}+\mathfrak E_{k,B,m}^2,\]
\[\mathfrak E_{k,c}^2=\sum_{j\in\Lambda_u, j'\in \Lambda_B} N_{j,k+1}N_{j',k+1} e^{-N_{j,k+1}
^2t-N_{j',k+1}^2t}E_{j,j',k,c}+\mathfrak E_{k,c,m}^2,\]
\[\mathfrak E_{k,B}^3=\sum_{j,j'\in\Lambda_B, j\neq j'} N_{j,k+1}N_{j',k+1} e^{-N_{j,k+1}
^2t-N_{j',k+1}^2t}E_{j,j',k,B}+\mathfrak E_{k,B,m}^3.\]

We put all the mollification errors together as
\[
\begin{split}
&\quad\mathfrak E_{k,u,m}^1+\mathfrak E_{k,c,m}^1-\mathfrak E_{k,B,m}^1+\mathfrak E_{k,u,m}^2+\mathfrak E_{k,c,m}^2+\mathfrak E_{k,B,m}^2-\mathfrak E_{k,B,m}^3\\
&=\sum_{j,j'\in \Lambda_u\cup\Lambda_B}N_{j,k+1}N_{j',k+1}e^{-N_{j,k+1}^2t-N_{j',k+1}^2t}F_{j,j',k}
\end{split}
\]
for some $F_{j,j',k}$.

We use $E_{j,k,u}$ as a prototype. The terms $E_{j,k,c}$ and $E_{j,k,B}$ have the same structure, with substitutions
\[
(a_{j,k+1,u},\eta_1^j)\rightsquigarrow (a_{j,k+1,B},\eta_1^j),\qquad
(a_{j,k+1,u},\eta_1^j)\rightsquigarrow (a_{j,k+1,B},\eta_2^j).
\]
Likewise, $E_{j,j',k,u}$, $E_{j,j',k,c}$, $E_{j,j',k,B}$ and the terms coming from $\mathfrak E_{k,B}^3$ all have the same lower order structure.
In particular, the term $E_{j,k,u}$ takes the form
\begin{equation}\notag
\begin{split}
E_{j,k,u}&=-2N_{j,k+1}^{-1}a_{j,k+1,u}\varphi_{j,k+1}\nabla(a_{j,k+1,u}\varphi_{j,k+1})\cdot\eta^j\sin(2N_{j,k+1}\eta^{j}\cdot x)\eta_1^j\otimes \eta_1^j\\
&\quad+4N_{j,k+1}^{-2}(\nabla(a_{j,k+1,u}\varphi_{j,k+1})\cdot\eta^j)^2\cos^2(N_{j,k+1}\eta^{j}\cdot x)\eta_1^j\otimes \eta_1^j\\
&\quad-2N_{j,k+1}^{-2}a_{j,k+1,u}\varphi_{j,k+1}\Delta(a_{j,k+1,u}\varphi_{j,k+1})\sin^2(N_{j,k+1}\eta^{j}\cdot x)\eta_1^j\otimes \eta_1^j\\
&\quad+2N_{j,k+1}^{-3}\Delta(a_{j,k+1,u}\varphi_{j,k+1})\nabla(a_{j,k+1,u}\varphi_{j,k+1})\cdot\eta^j\sin(2N_{j,k+1}\eta^{j}\cdot x)\eta_1^j\otimes \eta_1^j\\
&\quad+N_{j,k+1}^{-4}(\Delta(a_{j,k+1,u}\varphi_{j,k+1}))^2\sin^2(N_{j,k+1}\eta^{j}\cdot x)\eta_1^j\otimes \eta_1^j\\
&=:E^1_{j,k}+E^2_{j,k}+E^3_{j,k}+E^4_{j,k}+E^5_{j,k},
\end{split}
\end{equation}
 and the mollifier remainder is represented by one of them as
\begin{align*}
    F_{j,j',k,u}&=\Delta(\psi_{j,k+1,u}-N_{j,k+1}^{-2}a_{j,k+1,u}\varphi_{j,k+1}\eta_1^j\sin(N_{j,k+1}\eta^j\cdot x))\otimes\Delta\psi_{j',k+1}\\
    &\quad+\Delta(N_{j,k+1}^{-2}a_{j,k+1,u}\varphi_{j,k+1}\eta_1^j\sin(N_{j,k+1}\eta^j\cdot x))\\
    &\qquad\otimes \Delta(\psi_{j',k+1,u}-N_{j',k+1}^{-2}a_{j',k+1,u}\varphi_{j',k+1}\eta_1^{j'}\sin(N_{j',k+1}\eta^{j'}\cdot x)).
\end{align*}

\noindent
{\bf Summarizing the form of $R_k$.}

By the definition of $R_k$ in \eqref{tensor-RH}, employing \eqref{bar-v-minus-k1}, \eqref{unidirectional_terms}, \eqref{unidirectional_terms_c}, \eqref{different_directions_terms}, and \eqref{different_directions_terms_bb}, we can write
\[
R_k=\mathcal I_k+\mathcal J_k^1+\mathcal J_k^2+\mathcal E_k,
\]
where
\[
\begin{split}
\mathcal I_k
&:= -\int_0^{t} e^{(t-s)\Delta }\sum_{j\in\Lambda_u} N_{j,k+1}^2 e^{-2N_{j,k+1}^2s}\,
\mathbb Q\!\left(a_{j,k+1,u}^2 \eta_1^j\otimes \eta_1^j\right) \, ds\\
&\quad -\int_0^{t} e^{(t-s)\Delta }\sum_{j\in\Lambda_B} N_{j,k+1}^2 e^{-2N_{j,k+1}^2s}\,
\mathbb Q\!\left(a_{j,k+1,B}^2 (\eta_1^j\otimes \eta_1^j-\eta_2^j\otimes \eta_2^j)\right) \, ds,
\end{split}
\]
\[
\begin{split}
\mathcal J_k^1
&:= -\int_0^{t} e^{(t-s)\Delta }\sum_{j\in\Lambda_u} N_{j,k+1}^2 e^{-2N_{j,k+1}^2s}\\
&\qquad\quad\cdot\mathbb Q\!\left(a_{j,k+1,u}^2(\varphi^2_{j,k+1}\sin^2(N_{j,k+1}\eta^j\!\cdot x)-1)\eta_1^j\otimes\eta_1^j\right)\,ds\\
&\quad -\int_0^{t} e^{(t-s)\Delta }\sum_{j\in\Lambda_B} N_{j,k+1}^2 e^{-2N_{j,k+1}^2s}\\
&\qquad\quad\cdot\mathbb Q\!\left(a_{j,k+1,B}^2(\varphi^2_{j,k+1}\sin^2(N_{j,k+1}\eta^j\!\cdot x)-1)(\eta_1^j\otimes\eta_1^j-\eta_2^j\otimes\eta_2^j)\right)\,ds,
\end{split}
\]
\[
\begin{split}
\mathcal J_k^2
&:= -\int_0^t e^{(t-s)\Delta}\mathbb Q\Bigg[
\sum_{j,j'\in\Lambda_u\cup\Lambda_B,\ j\neq j'} N_{j,k+1}N_{j',k+1}e^{-N_{j,k+1}^2s-N_{j',k+1}^2s}\\
&\qquad\cdot a_{j,k+1}a_{j',k+1}\varphi_{j,k+1}\varphi_{j',k+1}\sin(N_{j,k+1}\eta^j\!\cdot x)\sin(N_{j',k+1}\eta^{j'}\!\cdot x)\,\eta^j\otimes\eta^{j'}\\
&\qquad\quad-\sum_{j,j'\in\Lambda_B,\ j\neq j'} N_{j,k+1}N_{j',k+1}e^{-N_{j,k+1}^2s-N_{j',k+1}^2s}\\
&\qquad\cdot a_{j,k+1,B}a_{j',k+1,B}\varphi_{j,k+1}\varphi_{j',k+1}\sin(N_{j,k+1}\eta^j\!\cdot x)\sin(N_{j',k+1}\eta^{j'}\!\cdot x)\,\eta_2^j\otimes\eta_2^{j'}
\Bigg]ds,
\end{split}
\]
and
\[
\mathcal E_k:=-\int_0^{t} e^{(t-s)\Delta }\mathbb Q(\mathfrak E_k^1+\mathfrak E_k^2-\mathfrak E_{k,B}^3)\,ds.
\]
We remark that in $\mathcal J_k^2$, $a_{j,k+1}=a_{j,k+1,u}$ for $j\in \Lambda_u$ and $a_{j,k+1}=a_{j,k+1,B}$ for $j\in \Lambda_B$; and the same convention applies to $a_{j',k+1}$.

Since the main term $\mathcal I_k$ is already analyzed by Proposition~\ref{barv-I_estimate_proposition}, it remains to estimate $\mathcal J_k^1,\mathcal J_k^2,\mathcal E_k$.



To estimate $\mathcal{J}_k^1$, we start by writing
\[
\mathcal J_k^1=\mathcal J_{k,u}^1+\mathcal J_{k,B}^1
\]
according to the $\Lambda_u$ and $\Lambda_B$ sums in \eqref{different_directions_terms}. The estimate of $\mathcal J_{k,u}^1$ can be done similarly as in \cite{CDP}. We only highlight the main steps as follows.
Applying $\eta^j\cdot \eta_1^j=0$ gives
\begin{equation}\label{div-osc}
\begin{split}
&\quad\div (a_{j,k+1,u}^2(x)(\varphi^2_{j,k+1} \sin^2(N_{j,k+1}\eta^j\cdot x) -1) \eta_1^j\otimes \eta_1^j)\\
&= \nabla a_{j,k+1,u}^2 \cdot \eta_1^j (\varphi^2_{j,k+1} \sin^2(N_{j,k+1}\eta^j\cdot x) -1)\eta_1^j.
\end{split}
\end{equation}
 Recall that $\varphi^2_{j,k+1}(x) \sin^2(N_{j,k+1}\eta^j\cdot x)-1$ is zero mean and $2\pi/M_{j,k+1}$-periodic. It then follows from
Lemma~\ref{l:oscillation_estimate} that
\begin{equation}\label{est-anti-div-osc}
\begin{split}
&\quad\big\|e^{(t-s)\Delta }\mathcal R\mathbb P\big(\nabla a_{j,k+1,u}^2 \cdot \eta_1^j (\varphi^2_{j,k+1} \sin^2(N_{j,k+1}\eta^j\cdot x) -1)\big)\big\|_{C^\varepsilon}\\
&\lesssim_\varepsilon M_{j,k+1}^{-1}\|\nabla(a_{j,k+1,u}^2)\|_{C^\varepsilon}e^{-M_{j,k+1}^2(t-s)/4}+M_{j,k+1}^{-m}\|\nabla(a_{j,k+1,u}^2)\|_{C^{m,\varepsilon}},
\end{split}
\end{equation}
\begin{equation}\label{est-derivative-anti-div-osc}
\begin{split}
&\quad\big\|\nabla^n e^{(t-s)\Delta }\mathcal R\mathbb P\big(\nabla a_{j,k+1,u}^2 \cdot \eta_1^j (\varphi^2_{j,k+1} \sin^2(N_{j,k+1}\eta^j\cdot x) -1)\big)\big\|_{C^\varepsilon}\\
&\lesssim N_{j,k+1}^{n-1}N_{J_d,k}^{1+\varepsilon}e^{-M_{j,k+1}^2(t-s)/4}+M_{j,k+1}^{-m+n}N_{J_d,k}^{m+1+\varepsilon}
\end{split}
\end{equation}
for $n\geq1$.

Since $\mathbb Q=\mathcal R\mathbb P\div$, applying \eqref{div-osc} and \eqref{est-anti-div-osc}, we deduce
\[
\begin{split}
\| \mathcal{J}_{k,u}^1 \|_{L^\infty} 
&\lesssim \sum_{j}  N_{J_d,k}^{1+\varepsilon}\big(  M_{j,k+1}^{-1}e^{-M_{j,k+1}^2t/4}+M_{j,k+1}^{-m}N_{J_d,k}^{m} \big),
\end{split}
\]
\[
\begin{split}
\| \nabla^n \mathcal{J}_{k,u}^1 \|_{L^\infty} 
&\lesssim \sum_{j}N_{J_d,k}^{1+\varepsilon}\big( N_{j,k+1}^{n-1} e^{-M_{j,k+1}^2t/4}+M_{j,k+1}^{-m+n}N_{J_d,k}^{m} \big), \quad n\geq 1.
\end{split}
\]

Regarding $\mathcal{J}_{k,B}^1 $, using $\eta_1^j\cdot\eta^j=\eta_2^j\cdot\eta^j=0$, we have
\begin{multline}\label{div-osc-B}
\div\Big(a_{j,k+1,B}^2(x)(\varphi^2_{j,k+1}\sin^2(N_{j,k+1}\eta^j\cdot x)-1)(\eta_1^j\otimes\eta_1^j-\eta_2^j\otimes\eta_2^j)\Big)\\
=\nabla a_{j,k+1,B}^2\cdot\eta_1^j\,(\varphi^2_{j,k+1}\sin^2(N_{j,k+1}\eta^j\cdot x)-1)\eta_1^j\\
\quad-\nabla a_{j,k+1,B}^2\cdot\eta_2^j\,(\varphi^2_{j,k+1}\sin^2(N_{j,k+1}\eta^j\cdot x)-1)\eta_2^j.
\end{multline}
Applying Lemma~\ref{l:oscillation_estimate} to both components in \eqref{div-osc-B}, and using again that
$\varphi^2_{j,k+1}(x)\sin^2(N_{j,k+1}\eta^j\cdot x)-1$ is zero mean and $2\pi/M_{j,k+1}$-periodic, we obtain
\[
\begin{split}
\| \mathcal{J}_{k,B}^1 \|_{L^\infty}
&=\Big\| \int_0^{t} e^{(t-s)\Delta }\sum_{j\in\Lambda_B}N_{j,k+1}^2e^{-2N_{j,k+1}^2s}\\
&\qquad\qquad\qquad\times\mathcal R\mathbb P\div\!\Big(a_{j,k+1,B}^2(\varphi^2_{j,k+1}\sin^2(N_{j,k+1}\eta^j\cdot x)-1)\\
&\qquad\qquad\qquad\qquad\qquad\qquad\cdot(\eta_1^j\otimes\eta_1^j-\eta_2^j\otimes\eta_2^j)\Big)\,ds\Big\|_\infty\\
&\lesssim_\varepsilon \int_0^t \sum_{j} N_{j,k+1}^2e^{-2N_{j,k+1}^2s}
\Big(M_{j,k+1}^{-1}\|\nabla(a_{j,k+1,B}^2)\|_{C^\varepsilon}e^{-M_{j,k+1}^2(t-s)/4}\\
&\qquad\qquad\qquad\qquad\qquad\qquad\qquad\qquad\qquad
+M_{j,k+1}^{-m}\|\nabla(a_{j,k+1,B}^2)\|_{C^{m,\varepsilon}}\Big)\,ds\\
&\lesssim_\varepsilon \int_0^t \sum_j N_{j,k+1}^2e^{-2N_{j,k+1}^2s}
N_{J_d,k}^{1+\varepsilon}\Big(M_{j,k+1}^{-1}e^{-M_{j,k+1}^2(t-s)/4}+M_{j,k+1}^{-m}N_{J_d,k}^m\Big)\,ds\\
&\lesssim \sum_j N_{J_d,k}^{1+\varepsilon}
\Big(M_{j,k+1}^{-1}e^{-M_{j,k+1}^2t/4}+M_{j,k+1}^{-m}N_{J_d,k}^{m}\Big).
\end{split}
\]
For $n\ge1$, similarly,
\[
\begin{split}
\| \nabla^n \mathcal{J}_{k,B}^1 \|_{L^\infty}
&\lesssim_\varepsilon \int_0^t \sum_j N_{j,k+1}^2e^{-2N_{j,k+1}^2s}
N_{J_d,k}^{1+\varepsilon}
\Big(N_{j,k+1}^{n-1}e^{-M_{j,k+1}^2(t-s)/4}+M_{j,k+1}^{-m+n}N_{J_d,k}^{m}\Big)\,ds\\
&\lesssim \sum_j N_{J_d,k}^{1+\varepsilon}
\Big(N_{j,k+1}^{n-1}e^{-M_{j,k+1}^2t/4}+M_{j,k+1}^{-m+n}N_{J_d,k}^{m}\Big).
\end{split}
\]
Combining the $\Lambda_u$ and $\Lambda_B$ bounds gives,
\[
\|\nabla^n \mathcal J_k^1\|_\infty
\lesssim \sum_{j}N_{J_d,k}^{1+\varepsilon}\big( N_{j,k+1}^{n-1} e^{-M_{j,k+1}^2t/4}+M_{j,k+1}^{-m+n}N_{J_d,k}^{m} \big), \quad n\geq 1.
\]

We note the same analysis above applies to:
\begin{itemize}
    \item the $\Lambda_B$--$\Lambda_B$ contribution in \eqref{different_directions_terms_2},
    \item the mixed $\Lambda_u$--$\Lambda_B$ contribution in \eqref{different_directions_terms_3},
    \item the subtracted $\bar h\otimes\bar h$ contribution in \eqref{different_directions_terms_bb}
\end{itemize}
(for the last one, replace $\eta_1^{j}\otimes\eta_1^{j'}$ by $\eta_2^{j}\otimes\eta_2^{j'}$). 
We thus proceed to estimate $\mathcal{J}_k^2$,
\begin{equation}
\begin{split}
&\|\nabla^n  \mathcal{J}_k^2(t) \|_{L^\infty}\\
&\quad=
\Big\| \int_0^{t} \nabla^n e^{(t-s)\Delta } \sum_{j\neq j'} N_{j,k+1}N_{j',k+1} e^{-N_{j,k+1}
^2s-N_{j',k+1}^2s}\\
&\qquad\times\mathbb Q(a_{j,k+1}a_{j',k+1}\varphi_{j,k+1}\varphi_{j',k+1} \sin(N_{j,k+1}\eta^j\cdot x)\sin(N_{j',k+1}\eta_{j'}\cdot x)\eta^j\otimes \eta_{j'}) \, ds \Big\|_\infty\\
&\quad\lesssim \int_0^{t} \sum_{j' < j} N_{j,k+1}N_{j',k+1} e^{-N_{j,k+1}
^2s-N_{j',k+1}^2s} N_{j,k+1}^{n+\varepsilon}e^{-\frac14 N_{j,k+1}
^2(t-s)}  \, ds\\
&\qquad +\int_0^{t} \sum_{j' < j} N_{j,k+1}N_{j',k+1} e^{-N_{j,k+1}
^2s-N_{j',k+1}^2s} N_{j,k+1}^{n-m}M_{j,k+1}^{m+\varepsilon}  \, ds\\
&\quad\eqcolon\mathcal{J}_k^{2,1} + \mathcal{J}_k^{2,2}
\end{split}
\end{equation}
where we bound $L^\infty$ norm by $C^\varepsilon$ norm, and apply the fact that $\mathbb Q$ is a Calder\'on--Zygmund operator and estimate \eqref{heat-decay-estimate}.

The major term $\mathcal{J}_k^{2,1}$ is estimated as
\begin{equation} \label{eq:NonlocalInteractions}
\begin{split}
\mathcal{J}_k^{2,1}&= \sum_{j'<j} e^{-\frac14N_{j,k+1}^2t}N^{n+1+\varepsilon}_{j,k+1}N_{j',k+1}\frac{1-e^{-(\frac34N_{j,k+1}^2+N_{j',k+1}^2)t}}{\frac34N_{j,k+1}^2+N_{j',k+1}^2}\\
& \lesssim \sum_{j'<j} e^{-\frac14N_{j,k+1}^2t}N^{n-1+\varepsilon}_{j,k+1}N_{j',k+1},
\end{split}
\end{equation}
while the term $\mathcal{J}_k^{2,2}$ is small since
\[
\mathcal{J}_k^{2,2} \lesssim \sum_{j'<j} N_{j,k+1}^{-1}N_{j',k+1} N_{j,k+1}^{n-m}M_{j,k+1}^{m+\varepsilon}.
\]
Therefore,
\[
\|\nabla^n\mathcal J_k^2\|_\infty
\lesssim \sum_{j'<j} e^{-\frac14N_{j,k+1}^2t}N^{n-1+\varepsilon}_{j,k+1}N_{j',k+1}
\,+\sum_{j'<j} N_{j,k+1}^{-1}N_{j',k+1} N_{j,k+1}^{n-m}M_{j,k+1}^{m+\varepsilon}.
\]

We move on to estimate $\mathcal{E}_k$.
It follows from \eqref{E1+E2terms} that
\[
\begin{split}
\mathcal{E}_k&=-\int_0^{t} e^{(t-s)\Delta } \mathbb Q(\mathfrak{E}_k^1 +\mathfrak{E}_k^2-\mathfrak E_{k,B}^3) \, ds\\
&= -\int_0^{t} e^{(t-s)\Delta }\sum_j N_{j,k+1}^2 e^{-2N_{j,k+1}
^2s} \mathbb Q E_{j,k}\, ds\\
&\quad -\int_0^{t} e^{(t-s)\Delta }\sum_{j\ne j'}N_{j,k+1}N_{j',k+1} e^{-N_{j,k+1}
^2s-N_{j',k+1}^2s} \mathbb Q E_{j,j',k}\, ds\\
&\quad-\int_0^{t} e^{(t-s)\Delta }\sum_{j,j'}N_{j,k+1}N_{j',k+1} e^{-N_{j,k+1}
^2s-N_{j',k+1}^2s} \mathbb Q F_{j,j',k}\, ds\\
&=:  \mathcal{E}_k^1+ \mathcal{E}_k^2 + \mathcal{E}_k^3.
\end{split}
\]
The terms $\mathcal E^1_k$ and $\mathcal E^3_k$ can be estimated as in \cite{CDP},
\[
\begin{split}
\|\nabla^n \mathcal E^1_k\|_\infty&\lesssim \sum_{j} \big(N_{j,k+1}^{n-1+\varepsilon}e^{-\frac14 N_{j,k+1}^2t} +N_{j,k+1}^{-2}M_{j,k+1}^{n+2+\varepsilon}\big),\\
 \|\nabla^n\mathcal E_k^3\|_\infty&\lesssim \sum_jN_{j,k+1}^{n+1+\varepsilon}\ell_{k+1}.
\end{split}
\]
In view of the structure of $\mathcal E^2_k$, it satisfies the same estimate of $\mathcal J^2_k$. Combining these estimates yields
\[ 
\begin{split}
&\quad\|\nabla^n(\mathcal{J}_k^1+\mathcal{J}_k^2 + \mathcal{E}_k)\|_\infty\\
&\leq \|\nabla^n\mathcal{J}_k^1\|_\infty+\|\nabla^n\mathcal{J}_k^2\|_\infty+\|\nabla^n\mathcal{E}_k^1\|_\infty+\|\nabla^n\mathcal{E}_k^2\|_{L^\infty}+\|\nabla^n\mathcal{E}_k^3\|_\infty\\
&\lesssim_{\varepsilon,m} \sum_{j} N_{j,k+1}^{n -1+ \varepsilon} N_{J_d,k} \big(e^{-\frac14 M_{j,k+1}^2t} + M_{j,k+1}^{-m}N_{J_d,k}^{m} \big)\\
&\quad \, \,  +\sum_{j'<j} N^{n-1+\varepsilon}_{j,k+1}N_{j',k+1}e^{-\frac14N_{j,k+1}^2t}\\
&\quad \, \, +\sum_{j'<j} N_{j,k+1}^{-1}N_{j',k+1} N_{j,k+1}^{n-m}M_{j,k+1}^{m+\varepsilon}\\
&\quad \, \, +\sum_{j}\big( N_{j,k+1}^{n-1+\varepsilon}e^{-\frac14 N_{j,k+1}^2t}+N_{j,k+1}^{-2}M_{j,k+1}^{n+ 2+\varepsilon}\big)
+\sum_jN_{j,k+1}^{n+1+\varepsilon}\ell_{k+1}.
\end{split}
\]
Again, for sufficiently large $A$, $b$, and $m$, there exists $\alpha> 0$, such that for any $\varepsilon_0 >0$ and $\bar n \in \mathbb{N}$,
\begin{equation}\label{est-error}
\|\nabla^n(\mathcal{J}_k^1+\mathcal{J}_k^2 + \mathcal{E}_k)\|_\infty \leq \varepsilon_0 N^{-\alpha}_{1,k+1}(t^{-\frac{n}{2} +\alpha}+1), \qquad n=1,2,\dots, \bar n.
\end{equation}

Finally, recalling 
\[\bar R_k=\bar R_{k,u}+\bar R_{k,B}, \quad \mathcal I_k=\mathcal I_{k,u}+\mathcal I_{k,c}, \quad R_k=\mathcal{I}_k+\mathcal{J}_k^1+\mathcal{J}_k^2 + \mathcal{E}_k,\]
it follows from the triangle inequality, \eqref{est-error} and Proposition~\ref{barv-I_estimate_proposition}
\[
\begin{split}
\|\nabla^n(R_k-\bar R_k)\|_\infty&=\|\nabla^n(R_k-I_k)+\nabla^n(I_k-\bar R_k)\|_\infty\\
& \leq \|\nabla^n(R_k-\mathcal{I}_k)\|_\infty + \|\nabla^n(\bar R_k-\mathcal{I}_k)\|_\infty\\
&\leq \|\nabla^n(\mathcal{J}_k^1+\mathcal{J}_k^2 + \mathcal{E}_k)\|_\infty + \|\nabla^n(\bar R_k-\mathcal{I}_k)\|_\infty\\
&\lesssim \varepsilon_0 N^{-\alpha}_{1,k+1}(t^{-\frac{n}{2} +\alpha}+1), \qquad n=1,2,\dots,\bar n,
\end{split}
\]
for any $\varepsilon_0 >0$, $\alpha \in(0, \frac{1}{10})$, $\bar n \in \mathbb{N}$, and sufficiently large $A$ and $b$.

The estimates for $n=0$ stated in the proposition follow in a similar way as in Proposition~\ref{barv-I_estimate_proposition}.

\end{proof}

\begin{proposition}\label{difference_estimate_H}
Taking $A$ and $b$ sufficiently large, we have for any $\varepsilon_0>0$, sufficiently small $\alpha>0$ and $\bar n \in \mathbb{N}$
\[
\|\nabla^n(H_k(t)- \bar H_k(t) )\|_{L^\infty} \leq_n \varepsilon_0 N_{1,k}^{-\alpha} (t^{-\frac{n}{2}+\alpha}+1), \qquad n=1,2,\dots, \bar n,
\]
\[
\|H_k(t)- \bar H_k(t)\|_{L^\infty} \lesssim_\varepsilon \varepsilon_0N_{1,k}^{-\alpha}+\mathbbm{1}_{t\leq t_k}N_{J_d, k}^{\varepsilon}, \qquad \text{if} \qquad d=2,
\]
\[
\|H_k(t)- \bar H_k(t)\|_{L^\infty} \lesssim \varepsilon_0N_{1,k}^{-\alpha}+\mathbbm1_{t\leq t_k}, \qquad \text{if} \qquad d\geq 3.
\]
\end{proposition}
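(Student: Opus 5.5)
The plan is to mirror the proof of Proposition \ref{difference_estimate_proposition}, with $\mathcal R\mathbb P\div=\mathbb Q$ replaced by $\mathcal R_s\mathbb P\div=\mathbb Q_s$ and with the tensor $\bar v_{k+1}\otimes\bar h_{k+1}-\bar h_{k+1}\otimes\bar v_{k+1}$ in place of $\bar v_{k+1}\otimes\bar v_{k+1}-\bar h_{k+1}\otimes\bar h_{k+1}$. Starting from \eqref{tensor-RH}, I will split
\[
\bar v_{k+1}\otimes\bar h_{k+1}-\bar h_{k+1}\otimes\bar v_{k+1}
=\sum_{j\in\Lambda_B}\big(\bar v_{j,k+1}\otimes\bar h_{j,k+1}-\bar h_{j,k+1}\otimes\bar v_{j,k+1}\big)+\sum_{j\neq j'}\big(\bar v_{j,k+1}\otimes\bar h_{j',k+1}-\bar h_{j',k+1}\otimes\bar v_{j,k+1}\big).
\]
Here the diagonal sum only contains the $\psi_{j,k+1,c}$ part of $\bar v_{j,k+1}$ paired with $\psi_{j,k+1,B}$ from the same $j\in\Lambda_B$. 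The off-diagonal sum includes the pairs with $j\in\Lambda_u$, since $\Lambda_u\cap\Lambda_B=\emptyset$.

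For the diagonal terms I will use the expansion \eqref{Delta-psi-u} for $\psi_{j,k+1,c}$ and $\psi_{j,k+1,B}$. The leading part is
\[
N_{j,k+1}^2e^{-2N_{j,k+1}^2s}a_{j,k+1,B}^2\varphi_{j,k+1}^2\sin^2(N_{j,k+1}\eta^j\cdot x)\,(\eta_1^j\otimes\eta_2^j-\eta_2^j\otimes\eta_1^j).
\]
Replacing $\varphi^2\sin^2$ by $1$ produces the main term $\mathcal I_{k,B}$. The remainder is an oscillatory term $\mathcal J^1_{k,B}$, plus derivative and mollifier errors $E_{j,k}$ and $F_{j,j',k}$ of exactly the same structure as in $\mathcal E_k$. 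This gives the decomposition $H_k=\mathcal I_{k,B}+\mathcal J^1_{k,B}+\mathcal J^2_{k,B}+\mathcal E_{k,B}$. Proposition \ref{barv-I_estimate_proposition} (Step III) already controls $\bar H_k-\mathcal I_{k,B}$, so it remains to bound the three error pieces by $\varepsilon_0N_{1,k+1}^{-\alpha}(t^{-n/2+\alpha}+1)$.

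For $\mathcal J^1_{k,B}$, the fact that $\eta^j\perp\eta_1^j,\eta_2^j$ gives
\[
\div\big(a^2(\varphi^2\sin^2-1)(\eta_1\otimes\eta_2-\eta_2\otimes\eta_1)\big)=(\varphi^2\sin^2-1)\big((\nabla a^2\cdot\eta_1)\eta_2-(\nabla a^2\cdot\eta_2)\eta_1\big).
\]
So a derivative falls on the slow amplitude, and the oscillation factor is mean-zero and $2\pi/M_{j,k+1}$-periodic. Lemma \ref{l:oscillation_estimate} applied to $e^{(t-s)\Delta}\mathcal R_s\mathbb P(\cdot)$ then gives the same bounds as \eqref{est-anti-div-osc}–\eqref{est-derivative-anti-div-osc}, because $\mathcal R_s$ is, like $\mathcal R$, a degree $-1$ Calderón–Zygmund-type multiplier.

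For the cross terms $\mathcal J^2_{k,B}$ and $\mathcal E_{k,B}$ there are two cases. In $d=2$, I will use the scale separation \eqref{eq:NonlocalInteractions}, which gains $N_{j',k+1}/N_{j,k+1}$ together with the heat decay $e^{-N_{j,k+1}^2t/4}$. In $d\ge3$, the supports of $\varphi_{j,k+1}$ and $\varphi_{j',k+1}$ are disjoint, so the cross terms vanish up to the mollifier errors, which are controlled by $N^{n+1+\varepsilon}_{j,k+1}\ell_{k+1}$. The step I expect to need the most care is checking that $\mathbb Q_s$ satisfies everything used for $\mathbb Q$: the $C^\varepsilon$ boundedness, the heat–frequency-localization estimate \eqref{heat-decay-estimate}, and the commutator Lemma \ref{l:commutator}. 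All of these depend only on $\mathbb Q_s$ being a zeroth-order homogeneous Fourier multiplier, so they should go through. Finally, the $n=0$ estimates follow as in Proposition \ref{barv-I_estimate_proposition}: on $[0,t_k]$ I will use the crude bounds \eqref{eq:bound_before_t_k_2D_B}–\eqref{eq:bound_before_t_k_3D_B}, and for $t\ge t_k$ the decaying bounds, choosing $A,b,m$ large.
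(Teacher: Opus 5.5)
Your plan follows the paper's proof of this proposition essentially step by step: the same split into same-$j$ and $j\neq j'$ interactions, the same main term $\mathcal I_{k,B}$ handled by Step III of Proposition~\ref{barv-I_estimate_proposition}, and the same treatment of lower-order and mollifier errors. Your use of disjoint supports for the cross terms when $d\ge3$ is in fact more careful than the paper's appeal to \eqref{eq:NonlocalInteractions}, which gains nothing when $N_{j,k+1}=N_{j',k+1}$. (The cross terms actually vanish identically there, since the mollified supports stay inside the disjoint cylinders.)

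\textbf{Where it fails.} The step that does not hold is the one you, and the paper, use for $\mathcal J^1_{k,B}$. Write $S_j=\eta_1^j\otimes\eta_2^j-\eta_2^j\otimes\eta_1^j$. The identity
\[
\div\big(a^2(\varphi^2\sin^2(N\eta\cdot x)-1)S\big)=(\varphi^2\sin^2(N\eta\cdot x)-1)\big((\nabla a^2\cdot\eta_1)\eta_2-(\nabla a^2\cdot\eta_2)\eta_1\big)
\]
lets $\eta_i\cdot\nabla$ act only on $a^2$ and on $\sin^2(N\eta\cdot x)$. But $\varphi_{j,k+1}$ is constant only along $\eta^j$. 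It localizes to a tube (a strip when $d=2$) of width $\sim\delta_0M_{j,k+1}^{-1}$ around $\mathbb R\eta^j$. So $\nabla\varphi_{j,k+1}$ lies in $\mathrm{span}(\eta_1^j,\eta_2^j)$, has size $M_{j,k+1}$, and is nonzero; for $d=2$, $\eta_1=\eta^\perp$ is exactly the direction in which $\varphi$ varies. The omitted term is, up to sign,
\[
a^2\sin^2(N_{j,k+1}\eta^j\cdot x)\big((\eta_1\cdot\nabla\varphi^2)\eta_2-(\eta_2\cdot\nabla\varphi^2)\eta_1\big).
\]
It has size $\|a_{j,k+1,B}\|_\infty^2M_{j,k+1}$, its fast average lives at frequency $M_{j,k+1}$, and no derivative falls on the slow amplitude. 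Lemma~\ref{l:oscillation_estimate} therefore only returns $M_{j,k+1}^{-1}\cdot M_{j,k+1}=O(1)$.

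\textbf{Why this is a real obstruction.} For any $f$ constant along $\eta$, a direct computation gives $\mathbb Q_s(fS)=\pm(f-\bar f)S$, with the sign fixed by the conventions for $\nabla$ and $\div$; so $\mathbb Q_s$ is essentially the identity on such skew tensors. The normalization gives $\overline{\varphi^2}=2$, because $\varphi^2\cos(2N\eta\cdot x)$ has zero mean. Hence $\varphi^2\sin^2(N\eta\cdot x)-1=\tfrac12(\varphi^2-2)-\tfrac12\varphi^2\cos(2N\eta\cdot x)$. The $\cos$ part is harmless, since $S$ is transverse to $\eta$. Up to commutators with $a^2$ of relative size $N_{J_d,k}/M_{j,k+1}$, $\mathcal J^1_{k,B}$ then contains, for $t\gg N_{j,k+1}^{-2}$, the piece $\pm\tfrac14e^{t\Delta}\big(a_{j,k+1,B}^2(\varphi_{j,k+1}^2-2)\big)S_j$. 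Its $n$-th derivatives are comparable to $M_{j,k+1}^n$ at $t\sim M_{j,k+1}^{-2}$, whereas the claimed bound there is $\varepsilon_0N_{1,k}^{-\alpha}M_{j,k+1}^{n-2\alpha}$.

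For the symmetric tensors in $d=2$ the analogous omission is harmless, because $\mathbb Q(f\eta^\perp\otimes\eta^\perp)=\mathcal R\mathbb P\nabla f=0$ when $f=f(\eta^\perp\cdot x)$. No such cancellation exists for the skew tensor in any dimension. So ``replace $\mathbb Q$ by $\mathbb Q_s$'' does not carry the argument over, and the Calder\'on--Zygmund properties of $\mathbb Q_s$ that you flagged are not where the difficulty lies. The paper's proof writes exactly the same identity, so this gap is inherited from it rather than a departure. Closing it needs an additional idea, for instance treating this $M_{j,k+1}$-scale magnetic piece as part of the principal profile and controlling its interactions, or a different transverse localization.
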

\begin{proof}
Recall $\mathcal I_{k,B}$ from Proposition~\ref{barv-I_estimate_proposition}:
\[
\mathcal{I}_{k,B}
= -\int_0^{t} e^{(t-s)\Delta }\sum_{j\in \Lambda_B} N_{j,k+1}^2 e^{-2N_{j,k+1}^2s}
\mathbb Q_s \!\left(a_{j,k+1,B}^2 (\eta_1^j\otimes \eta_2^j-\eta_2^j\otimes \eta_1^j)\right) \, ds.
\]
By the triangle inequality,
\begin{equation}\label{eq:Hk_barHk_triangle}
\|\nabla^n(H_k-\bar H_k)\|_\infty
\le \|\nabla^n(H_k-\mathcal I_{k,B})\|_\infty+\|\nabla^n(\bar H_k-\mathcal I_{k,B})\|_\infty.
\end{equation}

We first estimate $H_k-\mathcal I_{k,B}$. Expanding
\[
\bar v_{k+1}\otimes \bar h_{k+1}-\bar h_{k+1}\otimes \bar v_{k+1}
=\sum_{j\in\Lambda_B}\big(\bar v_{j,k+1}\otimes \bar h_{j,k+1}-\bar h_{j,k+1}\otimes \bar v_{j,k+1}\big)
+\sum_{(j,j')\in\mathcal A_k}\mathcal T_{j,j',k},
\]
where $\mathcal A_k=((\Lambda_u\cup\Lambda_B)\times\Lambda_B)\setminus\{(j,j):j\in\Lambda_B\}$ and
\[
\mathcal T_{j,j',k}
:=\bar v_{j,k+1}\otimes \bar h_{j',k+1}-\bar h_{j',k+1}\otimes \bar v_{j,k+1},
\]
and using the same $\Delta\psi$ expansions as in Proposition~\ref{difference_estimate_proposition}, we can write
\[
H_k-\mathcal I_{k,B}=\widetilde{\mathcal J}_k^1+\widetilde{\mathcal J}_k^2+\widetilde{\mathcal E}_k,
\]
where:
\[
\begin{split}
\widetilde{\mathcal J}_k^1
&=-\int_0^t e^{(t-s)\Delta}\sum_{j\in\Lambda_B}N_{j,k+1}^2e^{-2N_{j,k+1}^2s}\\
&\qquad\cdot\mathbb Q_s\!\left(a_{j,k+1,B}^2(\varphi_{j,k+1}^2\sin^2(N_{j,k+1}\eta^j\!\cdot x)-1)
(\eta_1^j\otimes\eta_2^j-\eta_2^j\otimes\eta_1^j)\right)\,ds,
\end{split}
\]
$\widetilde{\mathcal J}_k^2$ collects the different-direction interactions $\mathcal T_{j,j',k}$, and $\widetilde{\mathcal E}_k$ collects all lower-order derivative and mollifier-removal terms.

For $\widetilde{\mathcal J}_k^1$, set $g_j=a_{j,k+1,B}^2(\varphi_{j,k+1}^2\sin^2(N_{j,k+1}\eta^j\!\cdot x)-1)$. Since
$\eta_1^j\cdot\eta^j=\eta_2^j\cdot\eta^j=0$,
\[
\begin{split}
\div\!\big(g_j(\eta_1^j\otimes\eta_2^j-\eta_2^j\otimes\eta_1^j)\big)
&=(\nabla a_{j,k+1,B}^2\cdot\eta_1^j)(\varphi_{j,k+1}^2\sin^2(N_{j,k+1}\eta^j\!\cdot x)-1) \eta_2^j\\
&\quad-(\nabla a_{j,k+1,B}^2\cdot\eta_2^j)(\varphi_{j,k+1}^2\sin^2(N_{j,k+1}\eta^j\!\cdot x)-1)\eta_1^j.
\end{split}
\]
Hence the same oscillation estimate as for $\mathcal J_k^1$ in Proposition~\ref{difference_estimate_proposition} applies (with $\mathbb Q_s$ in place of $\mathbb Q$), giving for $n\ge1$:
\[
\|\nabla^n\widetilde{\mathcal J}_k^1\|_\infty
\lesssim \sum_j N_{J_d,k}^{1+\varepsilon}
\Big(N_{j,k+1}^{n-1}e^{-M_{j,k+1}^2t/4}+M_{j,k+1}^{-m+n}N_{J_d,k}^{m}\Big),
\]
and the corresponding $n=0$ bound as in the $\mathcal J_k^1$ estimate above.

For $\widetilde{\mathcal J}_k^2$, the proof is identical to the $\mathcal J_k^2$ estimate above: one uses heat decay and Lemma~\ref{l:commutator}; replacing
$\eta_1^j\otimes\eta_1^{j'}$ by $\eta_1^j\otimes\eta_2^{j'}-\eta_2^{j'}\otimes\eta_1^j$ does not change the size bounds. Therefore $\widetilde{\mathcal J}_k^2$ satisfies the same estimate as $\mathcal J_k^2$ (up to an absolute constant).

For $\widetilde{\mathcal E}_k$, all lower-order terms have the same derivative/mollifier structure as in the $\mathcal E_k$ estimate above, and $\mathbb Q_s$ has the same Calder\'on--Zygmund mapping properties used there. Thus $\widetilde{\mathcal E}_k$ obeys the same bound as $\mathcal E_k$ (up to an absolute constant).

Combining these three bounds exactly as in the end of the proof of Proposition~\ref{difference_estimate_proposition}, we obtain: for any $\varepsilon_0>0$, $\bar n\in\mathbb N$, and $\alpha\in(0,\frac1{10})$, after choosing $A,b,m$ large enough,
\begin{equation}\label{eq:H_minus_I_bound}
\|\nabla^n(H_k-\mathcal I_{k,B})\|_\infty
\le \varepsilon_0 N_{1,k+1}^{-\alpha}\big(t^{-n/2+\alpha}+1\big),\qquad n=1,\dots,\bar n,
\end{equation}
and for $n=0$:
\[
\|H_k-\mathcal I_{k,B}\|_\infty
\lesssim_\varepsilon \varepsilon_0N_{1,k+1}^{-\alpha}+\mathbbm 1_{t\le t_k}N_{J_d,k}^{\varepsilon}, \quad d=2,
\]
\[
\|H_k-\mathcal I_{k,B}\|_\infty
\lesssim \varepsilon_0N_{1,k+1}^{-\alpha}+\mathbbm 1_{t\le t_k}, \quad d\ge3.
\]

Next, Proposition~\ref{barv-I_estimate_proposition} gives
\[
\|\nabla^n(\bar H_k-\mathcal I_{k,B})\|_\infty
\le \varepsilon_0 N_{1,k}^{-\alpha}\big(t^{-n/2+\alpha}+1\big),\qquad n=1,\dots,\bar n,
\]
and the corresponding two $n=0$ bounds in $d=2$ and $d\ge3$.
Insert these together with \eqref{eq:H_minus_I_bound} into \eqref{eq:Hk_barHk_triangle}; since
$N_{1,k+1}^{-\alpha}\le N_{1,k}^{-\alpha}$, after adjusting constants we conclude
\[
\|\nabla^n(H_k-\bar H_k)\|_\infty
\le \epsilon_0 N_{1,k}^{-\alpha}(t^{-n/2+\alpha}+1),\qquad n=1,\dots,\bar n,
\]
\[
\|H_k-\bar H_k\|_\infty
\lesssim_\varepsilon \varepsilon_0N_{1,k}^{-\alpha}+\mathbbm 1_{t\le t_k}N_{J_d,k}^{\varepsilon}, \quad d=2,
\]
\[
\|H_k-\bar H_k\|_\infty
\lesssim \varepsilon_0N_{1,k}^{-\alpha}+\mathbbm 1_{t\le t_k}, \quad d\ge3.
\]
This concludes the proof.
\end{proof}

\subsection{Estimates of the principal solution}

We show that the principal part $(v,h)$ satisfies a forced MHD system. At the same time we establish appropriate estimates for $(v,h)$ and the forcing terms. 

\begin{proposition}\label{prop_f_estimate}
The principal part $(v,h)$ satisfies
    \begin{equation}\label{v_equation}
    \begin{split}
 \partial_tv-\Delta v+\mathbb P\div (v\otimes v-h\otimes h)&=\mathbb P\div f_u,\\
 \partial_th-\Delta h+\mathbb P\div (v\otimes h-h\otimes v)&=\mathbb P\div f_B,
 \end{split}
    \end{equation}
    with errors $f_u$ and $f_B$ satisfying
\begin{align}\label{f_bound}
    \|\nabla^n f_u\|_{C^\kappa}+  \|\nabla^n f_B\|_{C^\kappa}\lesssim_n \varepsilon_0 (t^{-1 - \frac{n}{2} +\alpha}+1)
\end{align}
for some $0<\kappa<\alpha<1$.
In addition, we have
\begin{align}
    \|\nabla^nv_k(t)\|_{L^p}+ \|\nabla^nh_k(t)\|_{L^p}&\lesssim ((t/N_{1,k+1})^{\alpha}+2^{-k/p})t^{-\frac12(1+n)}+N_{1,k+1}^{-\alpha},\label{vk-pointwise-bounds}
    \end{align}
    \begin{align}
    \|\nabla^nv(t)\|_{L^\infty}+\|\nabla^n\bar v(t)\|_{L^\infty}+ \|\nabla^nh(t)\|_{L^\infty}+\|\nabla^n\bar h(t)\|_{L^\infty}&\lesssim t^{-\frac12(1+n)}\label{v-pointwise-bounds}
\end{align}
for $n=0,1,2,\ldots,\overline n$, and
\begin{align}\label{v-critical-bounds}
    \|v\|_{L^1([t',t], t^{-\frac12}dt; L^\infty)}+\|v\|_{L^2([t',t]; L^\infty)}^2\lesssim 1+(\log A)^{-1}\log(t/{t'}),
\end{align}
\begin{align}\label{h-critical-bounds}
    \|h\|_{L^1([t',t], t^{-\frac12}dt; L^\infty)}+\|h\|_{L^2([t',t]; L^\infty)}^2\lesssim 1+(\log A)^{-1}\log(t/{t'}).
\end{align}
\end{proposition}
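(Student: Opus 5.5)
The plan is to derive the forced system \eqref{v_equation} directly from the definitions \eqref{prin}, and to read off $f_u,f_B$ explicitly as differences of quadratic terms. Each $v_k$ solves $\partial_t v_k-\Delta v_k=-\mathbb P\div(\bar v_{k+1}\otimes\bar v_{k+1}-\bar h_{k+1}\otimes\bar h_{k+1})$, and similarly for $h_k$. Summing over $k$, and noting that the $k=0$ level has no source from below (the $v_0,h_0$ equations are fed by level $1$), we get
\[
f_u=v\otimes v-h\otimes h-\sum_{k\ge0}(\bar v_{k+1}\otimes\bar v_{k+1}-\bar h_{k+1}\otimes\bar h_{k+1}),
\]
\[
f_B=v\otimes h-h\otimes v-\sum_{k\ge0}(\bar v_{k+1}\otimes\bar h_{k+1}-\bar h_{k+1}\otimes\bar v_{k+1}).
\]
We split $v=\sum_k v_k$ and write $v_k=\bar v_k+\div(R_k-\bar R_k)$, and likewise $h_k=\bar h_k+\div(H_k-\bar H_k)$. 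Then $f_u$ decomposes into three groups:
\begin{enumerate}[(i)]
\item the diagonal replacements $v_k\otimes v_k-\bar v_k\otimes\bar v_k$, which are bilinear in $\div(R_k-\bar R_k)$ and $v_k,\bar v_k$;
\item the cross-scale interactions $v_k\otimes v_{k'}$ with $k\ne k'$;
\item the $k=0$ term $\bar v_0\otimes\bar v_0$, which is smooth and harmless.
\end{enumerate}
The same decomposition applies to $f_B$, with $H_k$ in place of $R_k$.

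Before treating these groups, we establish the pointwise bounds \eqref{vk-pointwise-bounds} and \eqref{v-pointwise-bounds}. For $\bar v_k,\bar h_k$, use \eqref{eq:Dpsi_bounds}, which gives
\[
\|\nabla^n\bar v_k\|_\infty\lesssim\sum_j N_{j,k}^{1+n}e^{-N_{j,k}^2t}.
\]
Summing over the lacunary scales then yields $t^{-(1+n)/2}$. The support property $\supp\psi_{j,k}\subset\Omega_k$ together with \eqref{Omega_volume_estimate} supplies the factor $2^{-k/p}$ in $L^p$. For $v_k-\bar v_k=\div(R_k-\bar R_k)$, apply Propositions \ref{difference_estimate_proposition} and \ref{difference_estimate_H} with $n+1$ derivatives. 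This gives $\varepsilon_0N_{1,k}^{-\alpha}(t^{-(1+n)/2+\alpha}+1)$. We rewrite it as $(t/N_{1,k+1})^{\alpha}t^{-(1+n)/2}+N_{1,k+1}^{-\alpha}$ after adjusting $\alpha$ using $N_{1,k+1}\sim N_{1,k}^b$. Summing over $k$ gives \eqref{v-pointwise-bounds}, since $\sum_k N_{1,k}^{-\alpha}$ converges.

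For the forcing bound \eqref{f_bound}, we estimate each group in $C^{n,\kappa}$.
\begin{enumerate}[(i)]
\item The diagonal replacements are products of an $O(t^{-1/2})$ field with an error of size $\varepsilon_0N_{1,k}^{-\alpha}(t^{-1/2+\alpha}+1)$. Summing in $k$ gives $\varepsilon_0 t^{-1+\alpha}$.
\item For the cross-scale interactions with $k<k'$, the high field $v_{k'}$ contributes $\sum_{k'}N_{1,k'}e^{-cN_{1,k'}^2t}$, plus a small error. The low field $v_k$ is bounded by $\min(t^{-1/2},N_{J_d,k})$ from its frequency localization. The product is therefore only significant when $N_{1,k'}^{-2}\lesssim t$, and the sum is dominated by terms carrying a geometric gain $N_{J_d,k}/N_{1,k'}\ll1$. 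Choosing $A$ large makes this $\le\varepsilon_0 t^{-1+\alpha}$.
\end{enumerate}
Hölder regularity $C^\kappa$ with $\kappa<\alpha$ follows by interpolating between the $n$ and $n+1$ derivative bounds.

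\textbf{The main obstacle} is the early-time regime $t\le t_k$ in the $n=0$ bounds, where $R_k-\bar R_k$ is only $O(1)$ (or $O(N_{J_d,k}^\varepsilon)$ in $d=2$) and not small. Here I would not rely on the difference estimates. Instead I would bound $v_k$ and $\bar v_k$ separately by $N_{J_d,k}^{1+\varepsilon}\le t^{-1/4-}$, using $t\le t_k=N_{J_d,k}^{-4}$. Against the dominant level-$(k+1)$ fields, of size $N_{1,k+1}e^{-N^2_{1,k+1}t}$, this leaves a product bounded by $t^{-3/4-\varepsilon}$. That is $\lesssim\varepsilon_0t^{-1+\alpha}$ for small $t$, with the constant absorbed by taking $A$ large.

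Finally, \eqref{v-critical-bounds} and \eqref{h-critical-bounds} follow by integrating the $L^\infty$ bound $\|v(t)\|_\infty\lesssim\sum_k N_{J_d,k}e^{-N_{1,k}^2t}+\text{small}$. Each scale contributes $O(1)$ to $\int\|v\|_\infty^2dt$ and to $\int\|v\|_\infty t^{-1/2}dt$. Moreover, the scales active on $[t',t]$ number about $\log(t/t')/\log A$, because $\log N_{1,k}$ grows by a factor $b$ per step, giving at least $\log A$ in dyadic spacing.
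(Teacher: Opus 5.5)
Your decomposition of $f_u,f_B$ (diagonal replacements, cross-scale products, $k=0$ remainder) matches the paper's. So does your derivation of \eqref{vk-pointwise-bounds}--\eqref{v-pointwise-bounds} from \eqref{eq:Dpsi_bounds}, $|\Omega_k|^{1/p}$, and Propositions~\ref{difference_estimate_proposition}--\ref{difference_estimate_H}.

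The concrete gap is group (iii): $\bar v_0\otimes\bar v_0$ is not harmless.
\begin{itemize}
\item Since $a_{1,0,u}=1$ and $N_{1,0}=1$, you have $\bar v_0=e^{-t}\eta^1_1\sin(\eta^1\cdot x)$ with $\eta^1\in\Lambda_u$, so $\|\bar v_0\otimes\bar v_0\|_{L^\infty}=e^{-2t}$. This is of order one.
\item But \eqref{f_bound} demands $\lesssim\varepsilon_0(t^{-1+\alpha}+1)$. That $\varepsilon_0$-smallness is exactly what the fixed point in Proposition~\ref{w-exists-fixed-point-proposition} consumes, through $\|f_u\|_Y\lesssim\varepsilon_0$.
\item With $f_u$ defined as you define it, the bound is false for $t\sim 1$.
\end{itemize}
The missing observation is that this term can be removed from $f_u$ altogether. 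Indeed $\div\big(\sin^2(\eta^1\cdot x)\,\eta^1_1\otimes\eta^1_1\big)=0$ because $\eta^1_1\perp\eta^1$, and $\bar h_0=0$ because $a_{j,0,B}\equiv0$. This is how the paper obtains $\mathbb P\div(v_0\otimes v_0-h_0\otimes h_0)=\mathbb P\div f_{0,u}$.

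Second, the ``main obstacle'' you identify does not exist, and your workaround for it is not sound.
\begin{itemize}
\item The forcing only involves $\nabla^i(v_k-\bar v_k)=\nabla^i\div(R_k-\bar R_k)$, that is, at least one derivative of $R_k-\bar R_k$. The $n\ge1$ bounds of Propositions~\ref{difference_estimate_proposition}--\ref{difference_estimate_H} hold for every $t\ge0$; the weight $t^{-n/2+\alpha}$ already covers $t\le t_k$. The $n=0$ bounds with $\mathbbm 1_{t\le t_k}$ are used only later, for $\dot W^{-1,\infty}$.
\item Your substitute bound $\|v_k(t)\|_\infty\lesssim N_{J_d,k}^{1+\varepsilon}$ on $[0,t_k]$ holds for $\bar v_k$ but not for $v_k$. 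The same applies to your claim in (ii) that $v_k$ is frequency localized with $\|v_k\|_\infty\lesssim\min(t^{-1/2},N_{J_d,k})$.
\item For $t\lesssim N_{1,k+1}^{-2}$, which lies inside $[0,t_k]$, $v_k$ is the Duhamel transient of the level-$(k+1)$ interactions. In $d=2$ its cross-direction part is the term $\mathcal J_k^2$ in the proof of Proposition~\ref{difference_estimate_proposition}, controlled only by $\sum_{j'<j}N_{j',k+1}e^{-N_{j,k+1}^2t/4}$. Its size is about $N_{J_d-1,k+1}\gg N_{J_d,k}$.
\item The fix is to split $v_k=\bar v_k+\div(R_k-\bar R_k)$ in the cross terms as well, and treat the second piece exactly as in (i).
\end{itemize}

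Finally, for \eqref{v-critical-bounds} in $d=2$, your per-level bound $N_{J_d,k}e^{-N_{1,k}^2t}$ is too crude: its square integrates to about $N_{J_d,k}^2/N_{1,k}^2\gg1$. Use $\sum_jN_{j,k}e^{-N_{j,k}^2t}$ from \eqref{vk_bar_estimate} instead. Then count each $N_{j,k}$ as a separate scale; consecutive scales are separated by a factor at least $A$.
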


\begin{proof}
We first prove \eqref{vk-pointwise-bounds} and \eqref{v-pointwise-bounds}.
It follows from the definitions in \eqref{app-prin} and the estimate \eqref{eq:Dpsi_bounds} that
\begin{equation}\label{vk_bar_estimate}
\begin{split}
  &\quad  \|\nabla^n\overline v_k(t)\|_{p}+\|\nabla^n\overline h_k(t)\|_{p}\\
    &\lesssim |\Omega_k|^\frac1p\left(\|\nabla^n\overline v_k(t)\|_{\infty}+\|\nabla^n\overline h_k(t)\|_{\infty}\right)\\
    &\lesssim 2^{-k/p}\sum_jN_{j,k}^{1+n}e^{-N_{j,k}^2t}.
    \end{split}
\end{equation}
Thus the estimate for $(\bar v, \bar h)$ in \eqref{v-pointwise-bounds} follows immediately from \eqref{vk_bar_estimate} by taking the sum in $k\geq0$. Now applying the estimate of $(\bar v_k, \bar h_k)$ and Propositions~\ref{difference_estimate_proposition}-~\ref{difference_estimate_H} gives
\begin{equation}\label{alt_vk_bound}
\begin{split}
    \|\nabla^nv_k(t)\|_p&\leq \|\nabla^n(v_k(t)-\bar{v}_k(t))\|_p+\|\nabla^n\bar{v}_k(t)\|_p\\
    &\lesssim \|\nabla^{n+1}(R_k(t)-\bar{R}_k(t))\|_p+\|\nabla^n\bar{v}_k(t)\|_p\\
    &\lesssim N_{1,k+1}^{-\alpha}(t^{-\frac12-\frac n2+\alpha}+1)+2^{-k/p}\sum_jN_{j,k}^{1+n}e^{-N_{j,k}^2t},
\end{split}
\end{equation}
\begin{equation}\label{alt_hk_bound}
\begin{split}
    \|\nabla^nh_k(t)\|_p&\leq \|\nabla^n(h_k(t)-\bar{h}_k(t))\|_p+\|\nabla^n\bar{h}_k(t)\|_p\\
    &\lesssim \|\nabla^{n+1}(H_k(t)-\bar{H}_k(t))\|_p+\|\nabla^n\bar{h}_k(t)\|_p\\
    &\lesssim N_{1,k+1}^{-\alpha}(t^{-\frac12-\frac n2+\alpha}+1)+2^{-k/p}\sum_jN_{j,k}^{1+n}e^{-N_{j,k}^2t}.
\end{split}
\end{equation}
We thus conclude the estimates of \eqref{vk-pointwise-bounds}--\eqref{v-pointwise-bounds}.

We move on to define $f_u$ and $f_B$ and prove \eqref{f_bound}. For $k\geq0$, we can write
\[
\begin{split}
v_k(t) &= -\int_0^{t} e^{(t-s)\Delta }\mathbb P\div (\bar v_{k+1}\otimes \bar v_{k+1}-\bar h_{k+1}\otimes \bar h_{k+1})(s) \, ds\\
&=\int_0^{t} e^{(t-s)\Delta }\mathbb P\div (-v_{k+1}\otimes  v_{k+1}+h_{k+1}\otimes  h_{k+1}+f_{k+1,u})(s) \, ds
\end{split}
\]
with
\[
\begin{split}
f_{k,u}&=\left(v_{k} \otimes  v_{k}-\bar v_{k}\otimes \bar v_{k}\right)-\left(h_{k} \otimes  h_{k}-\bar h_{k}\otimes \bar h_{k}\right)\\
&=(v_{k}-\bar v_{k}) \otimes v_{k} + \bar v_{k} \otimes (v_{k}-\bar v_{k})-(h_{k}-\bar h_{k}) \otimes h_{k} - \bar h_{k} \otimes (h_{k}-\bar h_{k});
\end{split}
\]
and
\[
\begin{split}
h_k(t) &= -\int_0^{t} e^{(t-s)\Delta }\mathbb P\div (\bar v_{k+1}\otimes \bar h_{k+1}-\bar h_{k+1}\otimes \bar v_{k+1})(s) \, ds\\
&=\int_0^{t} e^{(t-s)\Delta }\mathbb P\div (-v_{k+1}\otimes  h_{k+1}+h_{k+1}\otimes  v_{k+1}+f_{k+1,B})(s) \, ds
\end{split}
\]
with
\[
\begin{split}
f_{k,B}&=\left(v_{k} \otimes  h_{k}-\bar v_{k}\otimes \bar h_{k}\right)-\left(h_{k} \otimes  v_{k}-\bar h_{k}\otimes \bar v_{k}\right)\\
&=(v_{k}-\bar v_{k}) \otimes h_{k} + \bar v_{k} \otimes (h_{k}-\bar h_{k})-(h_{k}-\bar h_{k}) \otimes v_{k} - \bar h_{k} \otimes (v_{k}-\bar v_{k}).
\end{split}
\]
Taking the sums $\sum _{k\geq 0}v_k(t)$ and $\sum _{k\geq 0}h_k(t)$ using the equations above, we obtain
\begin{equation}\label{v-h-mild}
\begin{split}
    v(t)&=\int_0^te^{(t-s)\Delta}\mathbb P\div (-v(s)\otimes v(s)+h(s)\otimes h(s)+f_u(s))ds,\\
    h(t)&=\int_0^te^{(t-s)\Delta}\mathbb P\div (-v(s)\otimes h(s)+h(s)\otimes v(s)+f_B(s))ds,
\end{split}    
\end{equation}
where $f_u$ and $f_B$ satisfy 
\begin{align*}
    \mathbb P\div f_u&=\mathbb P\div\left(v\otimes v-h\otimes h+\sum_{k\geq0}(-v_{k+1}\otimes v_{k+1}+h_{k+1}\otimes h_{k+1}+f_{k+1,u})\right)\\
    &=\mathbb P\div\left(v_0\otimes v_0-h_0\otimes h_0+\sum_{k\geq1} f_{k,u}   +\sum_{k_1\neq k_2}v_{k_1}\otimes v_{k_2}-\sum_{k_1\neq k_2}h_{k_1}\otimes h_{k_2}\right)
\end{align*}
and 
\begin{align*}
    \mathbb P\div f_B&=\mathbb P\div\left(v\otimes h-h\otimes v+\sum_{k\geq0}(-v_{k+1}\otimes h_{k+1}+h_{k+1}\otimes v_{k+1}+f_{k+1,B})\right)\\
    &=\mathbb P\div\left(v_0\otimes h_0-h_0\otimes v_0+\sum_{k\geq1} f_{k,B}   +\sum_{k_1\neq k_2}v_{k_1}\otimes h_{k_2}-\sum_{k_1\neq k_2}h_{k_1}\otimes v_{k_2}\right).
\end{align*}
Note the mild form \eqref{v-h-mild} is equivalent to \eqref{v_equation}.
Therefore we define
\begin{equation}\label{def-fu-fb}
\begin{split}
f_u&=\sum_{k\geq0} f_{k,u}   +\sum_{k_1\neq k_2}v_{k_1}\otimes v_{k_2}-\sum_{k_1\neq k_2}h_{k_1}\otimes h_{k_2},\\
f_B&=\sum_{k\geq0} f_{k,B}   +\sum_{k_1\neq k_2}v_{k_1}\otimes h_{k_2}-\sum_{k_1\neq k_2}h_{k_1}\otimes v_{k_2}.
\end{split}
\end{equation}
Regarding the zero modes, in view of \eqref{def_psi_0} and the fact that $a_{j,0,u}$ are constant, we have
\[\mathbb P\div (\bar v_0\otimes\bar v_0)=0.\] 
Since $\bar h_0=0$, we also have
\begin{align*}
    \mathbb P\div (v_0\otimes v_0-h_0\otimes h_0)=\mathbb P\div (\bar v_0\otimes\bar v_0-\bar h_0\otimes\bar h_0+f_{0,u})=\mathbb P\div f_{0,u},\\
    \mathbb P\div (v_0\otimes h_0-h_0\otimes v_0)=\mathbb P\div (\bar v_0\otimes\bar h_0-\bar h_0\otimes\bar v_0+f_{0,B})=\mathbb P\div f_{0,B}.
\end{align*}

Applying Proposition~\ref{difference_estimate_proposition}, Proposition~\ref{difference_estimate_H} and \eqref{v-pointwise-bounds}, 
we deduce for sufficiently small $\alpha>0$
\begin{align*}
\|\nabla^nf_{k,u}(t)\|_{C^\kappa} &\lesssim \sum_{i=0}^n\|\nabla^i(v_{k}-\bar v_{k})\|_{C^\kappa}(\|\nabla^{n-i}v_{k}\|_{C^\kappa}+\|\nabla^{n-i}\bar v_{k}\|_{C^\kappa})\\
&\lesssim\sum_{i=0}^n\varepsilon_0N_{1,k+1}^{-2\alpha}(t^{-\frac12(1+i+\kappa)+2\alpha}+1)(t^{-\frac12(1+n-i+\kappa)}+1)\\
&\lesssim \varepsilon_0N_{1,k+1}^{-2\alpha}(t^{-1-\frac n2+\alpha+(\alpha-\kappa)}+1),
\end{align*}
\begin{align*}
\|\nabla^nf_{k,B}(t)\|_{C^\kappa} &\lesssim \sum_{i=0}^n\|\nabla^i(h_{k}-\bar h_{k})\|_{C^\kappa}(\|\nabla^{n-i}h_{k}\|_{C^\kappa}+\|\nabla^{n-i}\bar h_{k}\|_{C^\kappa})\\
&\lesssim\sum_{i=0}^n\varepsilon_0N_{1,k+1}^{-2\alpha}(t^{-\frac12(1+i+\kappa)+2\alpha}+1)(t^{-\frac12(1+n-i+\kappa)}+1)\\
&\lesssim \varepsilon_0N_{1,k+1}^{-2\alpha}(t^{-1-\frac n2+\alpha+(\alpha-\kappa)}+1).
\end{align*}
It follows that $\sum_{k\geq0} f_{k,u} $ and $\sum_{k\geq0} f_{k,B}$ satisfy the estimate of \eqref{f_bound}.

The quadratic terms in \eqref{def-fu-fb} can be estimated similarly as in \cite{CDP}, 
\begin{equation}\notag
\begin{split}
   &\quad \Big\|\nabla^n\sum_{k_1\neq k_2}v_{k_1}\otimes v_{k_2}\Big\|_{C^\kappa}+\Big\|\nabla^n\sum_{k_1\neq k_2}h_{k_1}\otimes h_{k_2}\Big\|_{C^\kappa}\\
   &+\Big\|\nabla^n\sum_{k_1\neq k_2}v_{k_1}\otimes h_{k_2}\Big\|_{C^\kappa}+\Big\|\nabla^n\sum_{k_1\neq k_2}h_{k_1}\otimes v_{k_2}\Big\|_{C^\kappa}\\
    &\lesssim  \varepsilon_0 t^{-1-\frac n2+2\alpha-\kappa}+t^{-1-\frac n2+\frac12(1-b^{-1/J_d}-4\kappa)}\\
    &\lesssim \varepsilon_0 (t^{-1 - \frac{n}{2} +\alpha}+1)
\end{split}    
\end{equation}
upon choosing $\alpha,\kappa>0$ small.  We complete the proof of \eqref{f_bound}.

Since $v_k$ and $h_k$ satisfy the same estimates,  \eqref{v-critical-bounds} and \eqref{h-critical-bounds} can be obtained analogously as in \cite{CDP}, and hence we omit the details.

\end{proof}

\section{Construction of perturbation}\label{sec:corrector}

In this section we introduce a perturbation that removes the forcing terms in \eqref{v_equation} and hence obtain a solution to system \eqref{mhd}. To do so, we apply semigroup theory and a fixed point argument. 

Denote $(U,H)$ by any classical solution to the MHD system \eqref{mhd} on the time interval $[0,T]$. For notation convenience, we translate $[0,T]$ to $[-T_*, T-T_*]$ so the blowup occurs at $t=0$ after translation. Define
\begin{align*}
    \sup_{0\leq n\leq 10}\left(\|\nabla^nU\|_{L_{t,x}^\infty(\mathbb T^d\times[0,T-T_*])}+\|\nabla^n H\|_{L_{t,x}^\infty(\mathbb T^d\times[0,T-T_*])}\right)\eqcolon C_{U,H}<\infty.
\end{align*}

We extend $(v,h)$ on $\mathbb T^d\times[0,\infty)$ constructed in Section \ref{sec:est} by $0$ to the full time interval $\mathbb R$. For $N_0>0$, we rescale the pair to $(v^{N_0},h^{N_0})(x,t)=(N_0v(N_0x,N_0^2t), N_0 h(N_0x,N_0^2t))$, which is viewed as $2\pi/N_0$-periodic on $\mathbb R^d\times \mathbb R$. 

The goal is to construct a small perturbation $(w,\zeta)$ such that the pair $(u,B)$ given by
\[
u = U + v^{N_0} + w^{N_0},\qquad B = H + h^{N_0} + \zeta^{N_0},
\]
is a solution to the MHD system \eqref{mhd}.
In the rest of the section we work with the rescaled solution
\[
u^{1/N_0}=U^{1/N_0}+v+w,\qquad B^{1/N_0}=H^{1/N_0}+h+\zeta.
\]
Since $(v,h)$ satisfies \eqref{v_equation} and $(U^{1/N_0}, H^{1/N_0})$ is a classical solution to \eqref{mhd}, we derive that 
 $(w,\zeta)$ satisfies
\[
\begin{split}
&\quad\partial_tw-\Delta w+\mathbb P\div\!\Big(w\otimes w-\zeta\otimes\zeta+2(U^{1/N_0}+v)\odot w-2(H^{1/N_0}+h)\odot\zeta\Big)\\
&=-\mathbb P\div\!\Big(f_u+2U^{1/N_0}\odot v-2H^{1/N_0}\odot h\Big),\\
&\quad\partial_t\zeta-\Delta \zeta+\mathbb P\div\!\Big(w\otimes\zeta-\zeta\otimes w+(U^{1/N_0}+v)\otimes\zeta-\zeta\otimes(U^{1/N_0}+v)\\
&\qquad\qquad\qquad\qquad\qquad\qquad +w\otimes(H^{1/N_0}+h)-(H^{1/N_0}+h)\otimes w\Big)\\
&=-\mathbb P\div\!\Big(f_B+U^{1/N_0}\otimes h+v\otimes H^{1/N_0}-H^{1/N_0}\otimes v-h\otimes U^{1/N_0}\Big)
\end{split}
\]
with initial data $(w,\zeta)(x,0)=(0,0)$.


Recall the lifetime of $(U^{1/N_0},H^{1/N_0})$ is $[-N_0^2T_*,N_0^2(T-T_*)]$ after translation and rescaling.
Take $\bar T=\min\{N_0^2(T-T_*),\bar C\}$ for some large constant $\bar C>1$. Define the spaces
\begin{equation}\label{def-X}
\begin{split}
X&=\Big\{V\in C^0((0,\bar T]; C^{1,\kappa}(\mathbb T^d; \mathbb R^d)): \\
& \qquad\qquad \|V\|_{X}:= \sup_{t\in(0,\bar T]}(t^{\frac{1-\alpha}2} \|V\|_{L^\infty}+t^{\frac{2-\alpha}2} \|\nabla V\|_{C^{\kappa}})<\infty\Big\}
\end{split}
\end{equation}
\begin{equation}\label{def-Y}
\begin{split}
Y&=\Big\{\phi\in C^0((0,\bar T]; C^{1,\kappa}(\mathbb T^d; \mathbb R^{d\times d})): \\
& \qquad\qquad \|\phi\|_{Y}:= \sup_{t\in(0,\bar T]}(t^{1-\alpha} \|\phi\|_{L^\infty}+t^{\frac32-\alpha} \|\nabla \phi\|_{C^{\kappa}})<\infty\Big\},
\end{split}
\end{equation}
and the product spaces
\[
\mathcal X\coloneqq X\times X,\qquad \mathcal Y\coloneqq Y\times Y\]
with
\[
\|(W,Z)\|_{\mathcal X}\coloneqq \|W\|_X+\|Z\|_X,\quad \|(\phi_u,\phi_B)\|_{\mathcal Y}\coloneqq \|\phi_u\|_Y+\|\phi_B\|_Y.
\]
\begin{lemma}\label{product_X_Y_lemma}
    For $g,h\in X$, we have $g\otimes h,g\odot h\in Y$ and
    \begin{align*}
        \|g\otimes h\|_Y+\|g\odot h\|_Y\lesssim \|g\|_X\|h\|_X.
    \end{align*}
\end{lemma}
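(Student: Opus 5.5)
The estimate follows from pointwise product bounds together with a check that the time weights multiply correctly; I would argue at each fixed time $t\in(0,\bar T]$ and then take the supremum. Since $g\odot h=\frac12(g\otimes h+h\otimes g)$, it suffices to treat $g\otimes h$; the symmetric product then follows from the triangle inequality and the symmetry of the right-hand side in $g,h$. Continuity in time with values in $C^{1,\kappa}$ is inherited from $g,h\in C^0((0,\bar T];C^{1,\kappa})$, because $C^{1,\kappa}$ is an algebra on $\mathbb T^d$.

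\textbf{The $L^\infty$ part.} From the definition \eqref{def-X},
\[
t^{1-\alpha}\|g\otimes h\|_{L^\infty}\le \big(t^{\frac{1-\alpha}2}\|g\|_{L^\infty}\big)\big(t^{\frac{1-\alpha}2}\|h\|_{L^\infty}\big)\le \|g\|_X\|h\|_X .
\]

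\textbf{The gradient part.} Write $\nabla(g\otimes h)=\nabla g\otimes h+g\otimes\nabla h$ and use the H\"older product rule
\[
\|FG\|_{C^\kappa}\le \|F\|_{C^\kappa}\|G\|_{L^\infty}+\|F\|_{L^\infty}\|G\|_{C^\kappa}.
\]
This gives
\[
\|\nabla g\otimes h\|_{C^\kappa}\lesssim \|\nabla g\|_{C^\kappa}\|h\|_{L^\infty}+\|\nabla g\|_{L^\infty}\|h\|_{C^\kappa},
\]
where $\|\cdot\|_{C^\kappa}$ is the H\"older seminorm, as in Section \ref{sec:notation}.

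The first term, after multiplying by $t^{\frac32-\alpha}=t^{\frac{2-\alpha}2}\,t^{\frac{1-\alpha}2}$, is bounded by $\|g\|_X\|h\|_X$.

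The second term is the only real point, since the $X$-norm does not directly control $\|\nabla g\|_{L^\infty}$ or $\|h\|_{C^\kappa}$.

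\begin{itemize}
\item For $\|h\|_{C^\kappa}$, interpolate $\|h\|_{C^\kappa}\lesssim \|h\|_{L^\infty}^{1-\kappa}\|\nabla h\|_{L^\infty}^{\kappa}$, which gives $\|h\|_{C^\kappa}\lesssim t^{-\frac{1-\alpha}2-\frac\kappa2}\|h\|_X$.
\item For $\|\nabla g\|_{L^\infty}$, the seminorm on $\nabla g$ does not control its size by itself. However, $\nabla g$ is a gradient of a periodic function and so has mean zero on $\mathbb T^d$, which gives $\|\nabla g\|_{L^\infty}\lesssim \|\nabla g\|_{C^\kappa}\lesssim t^{-\frac{2-\alpha}2}\|g\|_X$.
\end{itemize}

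The second term is then bounded by $t^{-\frac32+\alpha-\frac\kappa2}\|g\|_X\|h\|_X$, which carries an extra factor $t^{-\kappa/2}$.

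I would handle this extra factor in the following order.

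\begin{itemize}
\item First I would avoid interpolation altogether: bound $\|h\|_{C^\kappa}\lesssim\|h\|_{L^\infty}$ using the periodic scale, since the $\kappa$-seminorm on the torus is at most twice the sup norm divided by distance$^\kappa$ for large separations.
\item On the torus this is only bounded for separations of order one. For separations $r<t^{1/2}$ I would use $\|\nabla h\|_{L^\infty} r^{1-\kappa}$, and for $r\ge t^{1/2}$ I would use $2\|h\|_{L^\infty}r^{-\kappa}$. Both give $\|\nabla g\|_{L^\infty}$ times a quantity $\lesssim t^{-\frac{2-\alpha}2}\,t^{-\frac{1-\alpha}2-\frac\kappa2}$, which again loses $t^{-\kappa/2}$.
\item The resolution is to move the H\"older increment onto the other factor. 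Estimate the difference
\[
[\nabla g\otimes h](x)-[\nabla g\otimes h](y)=(\nabla g(x)-\nabla g(y))\otimes h(x)+\nabla g(y)\otimes(h(x)-h(y)).
\]
In the second piece, bound $|h(x)-h(y)|\le \min(2\|h\|_\infty,\|\nabla h\|_\infty|x-y|)$ and $|\nabla g(y)|\le \|\nabla g\|_{C^\kappa}$ (by the mean-zero argument, up to the diameter of $\mathbb T^d$). The ratio $|h(x)-h(y)|/|x-y|^\kappa$ is then at most $\|\nabla h\|_\infty^\kappa(2\|h\|_\infty)^{1-\kappa}$.
\end{itemize}

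So the key issue is whether the time weights of $Y$ are exactly compatible. The weights in $X$ have been chosen so that $\|\nabla V\|_{C^\kappa}$ already carries the $\kappa$-loss built into $t^{\frac{2-\alpha}2}$: the natural scaling would be $t^{\frac{2+\kappa-\alpha}2}$. Consequently, any product estimate either recovers the exponent $\frac32-\alpha$ up to $t^{\pm\kappa/2}$, or uses the restriction $t\le\bar T\le\bar C$ to absorb a \emph{positive} power of $t$.

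I expect this bookkeeping to be the main obstacle. I would settle it by arranging every factor so that the H\"older seminorm falls on a gradient term, which is weighted by $t^{\frac{2-\alpha}2}$, while the other factor is measured in $L^\infty$, which is weighted by $t^{\frac{1-\alpha}2}$. The weights then multiply to exactly $t^{\frac32-\alpha}$. The leftover cross terms, in which $\nabla g$ is measured in $L^\infty$, I would bound by $\|\nabla g\|_{C^\kappa}$ via the mean-zero property, with constants depending on $\bar C$. This yields $\|g\otimes h\|_Y+\|g\odot h\|_Y\lesssim\|g\|_X\|h\|_X$.
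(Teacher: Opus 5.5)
Your $L^\infty$ bound and the term $\|\nabla g\|_{C^\kappa}\|h\|_{L^\infty}$ are fine. The cross term $\|\nabla g\|_{L^\infty}\|h\|_{C^\kappa}$, however, is never closed, so the proposal has a genuine gap. Your own computation shows the problem. Once you bound $\|\nabla g\|_{L^\infty}$ and $\|\nabla h\|_{L^\infty}$ by the H\"older seminorms of the gradients via the mean-zero property, this term is only $\lesssim t^{-\frac32+\alpha-\frac\kappa2}\|g\|_X\|h\|_X$.

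None of your later steps removes the factor $t^{-\kappa/2}$. The ``move the increment'' decomposition is the same splitting as before, and its second piece is again $\|\nabla g\|_{L^\infty}\|h\|_{C^\kappa}$. Your final paragraph re-invokes the mean-zero bound, which still leaves $\|h\|_{C^\kappa}$; the $X$-norm does not control that with weight $t^{\frac{1-\alpha}2}$. Because $t^{-\kappa/2}$ is a \emph{negative} power, dependence on $\bar C$ cannot absorb it as $t\to0^+$. So $\sup_t t^{\frac32-\alpha}\|\nabla(g\otimes h)\|_{C^\kappa}<\infty$ is not established. The culprit is the bound $\|\nabla f\|_{L^\infty}\lesssim\|\nabla f\|_{C^\kappa}$: it is obtained at the unit (torus) scale and is therefore not scale-invariant.

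The missing idea is scale-invariant interpolation, which holds for bounded functions on $\mathbb R^d$ and hence for periodic ones. For any unit vector $e$ and any $r>0$, Taylor's formula gives $|\partial_e f(x)|\le 2\|f\|_{L^\infty}r^{-1}+\|\nabla f\|_{C^\kappa}r^{\kappa}$. Optimize in $r$, and combine with your bound $\|f\|_{C^\kappa}\le(2\|f\|_{L^\infty})^{1-\kappa}\|\nabla f\|_{L^\infty}^{\kappa}$. This yields
\[
\|\nabla f\|_{L^\infty}\lesssim\|f\|_{L^\infty}^{\frac{\kappa}{1+\kappa}}\|\nabla f\|_{C^\kappa}^{\frac{1}{1+\kappa}},\qquad
\|f\|_{C^\kappa}\lesssim\|f\|_{L^\infty}^{\frac{1}{1+\kappa}}\|\nabla f\|_{C^\kappa}^{\frac{\kappa}{1+\kappa}}.
\]
Hence, by Young's inequality,
\[
\|\nabla g\|_{L^\infty}\|h\|_{C^\kappa}\lesssim\big(\|\nabla g\|_{C^\kappa}\|h\|_{L^\infty}\big)^{\frac{1}{1+\kappa}}\big(\|g\|_{L^\infty}\|\nabla h\|_{C^\kappa}\big)^{\frac{\kappa}{1+\kappa}}\le\|\nabla g\|_{C^\kappa}\|h\|_{L^\infty}+\|g\|_{L^\infty}\|\nabla h\|_{C^\kappa}.
\]
This is exactly the tame product estimate the paper's one-line proof invokes. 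Each product on the right is bounded by $t^{-\frac32+\alpha}\|g\|_X\|h\|_X$, so the $Y$-bound follows with no loss and no $\bar C$-dependence.

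Your remark that the $X$-weight on $\|\nabla V\|_{C^\kappa}$ is off natural scaling by $t^{\kappa/2}$ is correct but harmless. The $Y$-weight is shifted by the same amount. The interpolation automatically works at the optimal scale $r\sim(\|g\|_{L^\infty}/\|\nabla g\|_{C^\kappa})^{1/(1+\kappa)}$ rather than at the diameter of $\mathbb T^d$.
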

The lemma follows from the estimate
\[
\|gh\|_{C^{1,\kappa}}\lesssim \|g\|_{C^{1,\kappa}}\|h\|_{L^\infty}+\|g\|_{L^\infty}\|h\|_{C^{1,\kappa}}.
\]

Set
\[
\tilde v\coloneqq U^{1/N_0}+v,\qquad \tilde h\coloneqq H^{1/N_0}+h.
\]
For $\Phi=(\phi_u,\phi_B)\in\mathcal Y$ and $0<t'\leq t\leq \bar T$, denote
\[
\mathbb S(t,t')\Phi=(W,Z)
\]
which solves the system
\begin{equation}\label{semi-group}
\begin{split}
\partial_tW-\Delta W+\mathbb P\div\!\Big(2\tilde v\odot W-2\tilde h\odot Z\Big)&=0,\\
\partial_tZ-\Delta Z+\mathbb P\div\!\Big(\tilde v\otimes Z-Z\otimes \tilde v+W\otimes \tilde h-\tilde h\otimes W\Big)&=0,\\
(W,Z)\vert_{t=t'}=\big(\mathbb P\div \phi_u(t'),\,  \mathbb P\div \phi_B(t')\big)&.
\end{split}
\end{equation}

\begin{proposition}\label{prop-semi}
For any $\Phi=(\phi_u,\phi_B)\in\mathcal Y$, we have
\begin{equation}\notag
\begin{aligned}
\|W(t)\|_{L^\infty}+\|Z(t)\|_{L^\infty}
&+(t-t')^{\frac12}\Big(\|\nabla W(t)\|_{C^\kappa}+\|\nabla Z(t)\|_{C^\kappa}\Big)\\
&\lesssim_{\bar C} t^{-\frac12}(t')^{-1+\alpha}(t/t')^{\epsilon}\|\Phi\|_{\mathcal Y}, \quad 0<t'\leq t\leq \bar T.
\end{aligned}
\end{equation}
\end{proposition}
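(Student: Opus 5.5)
The plan is to write the solution $(W,Z)=\mathbb S(t,t')\Phi$ of \eqref{semi-group} in mild form and close a Grönwall-type estimate in logarithmic time. The averaged critical bounds \eqref{v-critical-bounds}--\eqref{h-critical-bounds} for $(v,h)$ will produce the loss $(t/t')^{\epsilon}$.

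\textbf{Mild formulation.} By Duhamel,
\[
W(t)=e^{(t-t')\Delta}\mathbb P\div\phi_u(t')-\int_{t'}^t e^{(t-s)\Delta}\mathbb P\div\big(2\tilde v\odot W-2\tilde h\odot Z\big)(s)\,ds,
\]
and $Z$ satisfies the analogous formula with the antisymmetric bilinear terms. The forcing in both equations is linear in $(W,Z)$ with coefficients $\tilde v,\tilde h$, so $W$ and $Z$ are treated simultaneously through $F(t)=\|W(t)\|_{L^\infty}+\|Z(t)\|_{L^\infty}$.

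\textbf{Free part.} The heat kernel bounds $\|e^{\tau\Delta}\mathbb P\div g\|_{L^\infty}\lesssim\tau^{-1/2}\|g\|_{L^\infty}$ (up to a $C^\kappa$ loss for $\mathbb P$) and $\|e^{\tau\Delta}\mathbb P\div g\|_{L^\infty}\lesssim\|\nabla g\|_{C^\kappa}$ give two estimates for the free term:
\[
\min\big((t-t')^{-1/2}(t')^{-1+\alpha},\,(t')^{-3/2+\alpha}\big)\|\Phi\|_{\mathcal Y}.
\]
Separating the cases $t\le 2t'$ and $t\ge 2t'$, this minimum is $\lesssim t^{-1/2}(t')^{-1+\alpha}\|\Phi\|_{\mathcal Y}$, which is the claimed bound with no $\epsilon$-loss.

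\textbf{Drift term and Grönwall.} The drift term is bounded by
\[
\int_{t'}^t (t-s)^{-1/2}\big(\|\tilde v(s)\|_{L^\infty}+\|\tilde h(s)\|_{L^\infty}\big)F(s)\,ds .
\]
The contribution of $U^{1/N_0},H^{1/N_0}$ is at most $C_{U,H}$ on $[0,\bar T]$, so it is harmless with constants depending on $\bar C$. The principal part $(v,h)$ is critical: by \eqref{v-pointwise-bounds} it is $O(s^{-1/2})$ with an $O(1)$ constant. This is the main obstacle, since it is not small pointwise and a naive Grönwall in $s$ loses a full power.

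To handle it, I will work on dyadic intervals $[2^i t',2^{i+1}t']$ with the weighted quantity $G(t)=\sup_{t'\le s\le t} s^{1/2}(t/s)^{-\epsilon}\ldots$. More concretely, I will set
\[
\mathcal M(t)=\sup_{s\in[t',t]}s^{1/2}(t')^{1-\alpha}(s/t')^{-\epsilon}F(s),
\]
and estimate the integral by splitting at $s=t/2$.
\begin{itemize}
\item On $[t/2,t]$, use $(t-s)^{-1/2}$ together with the local $L^2_tL^\infty$ bound.
\item On $[t',t/2]$, use $(t-s)^{-1/2}\lesssim t^{-1/2}$ and the weighted bound $\|v\|_{L^1([t',t],s^{-1/2}ds;L^\infty)}\lesssim 1+(\log A)^{-1}\log(t/t')$.
\end{itemize}
The resulting inequality has the schematic form $\mathcal M\le C\|\Phi\|+C(\log A)^{-1}\log(t/t')\cdot(\ldots)$. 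A discrete Grönwall over dyadic scales then yields the growth factor $\prod_i(1+C/\log A)\approx(t/t')^{C/\log A}$, and I choose $A$ large so that $C/\log A\le\epsilon$. The $O(1)$ part of the critical norm, which is concentrated on at most boundedly many scales since the $v_k$ are localized near $t\sim N_{j,k}^{-2}$, only contributes a constant factor per scale. This is the delicate step, and I will need \eqref{vk-pointwise-bounds} to ensure that each dyadic window sees only $O(1)$ of critical mass.

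\textbf{Gradient bound.} For $(t-t')^{1/2}\|\nabla W\|_{C^\kappa}$, I will repeat the Duhamel argument on $[t'',t]$ with $t''=(t+t')/2$. The data at $t''$ is then in $L^\infty$ with the bound already proved, and I use parabolic smoothing $\|\nabla e^{\tau\Delta}\mathbb P\div g\|_{C^\kappa}\lesssim\tau^{-1-\kappa/2}\|g\|_{L^\infty}$ interpolated against $C^{\kappa}$ norms of the drift. The required bounds $\|\nabla\tilde v\|_{L^\infty}\lesssim s^{-1}$ come from \eqref{v-pointwise-bounds} with $n=1$. A standard bootstrap in the weighted norm $(t-t')^{1/2}\|\nabla\cdot\|_{C^\kappa}$ closes this, using Lemma~\ref{product_X_Y_lemma}-type product estimates for $\tilde v\odot W$ in $C^{1,\kappa}$.
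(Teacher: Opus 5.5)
Your proposal follows essentially the same route as the paper. Both write Duhamel for $(W,Z)$ jointly with the weight $t^{1/2}$, take the free-part bound $t^{-1/2}(t')^{-1+\alpha}\|\Phi\|_{\mathcal Y}$, split the drift integral (the paper's kernel $s^{-1/2}+(t-s)^{-1/2}$ is exactly your split at $s=t/2$), run a Gr\"onwall argument on the logarithmic critical bounds to get the $(t/t')^{\epsilon}$ loss, and restart Duhamel at $(t+t')/2$ for the gradient. One point to fix: on $[t/2,t]$ the $L^2_tL^\infty$ bound alone cannot control $\int_{t/2}^t(t-s)^{-1/2}\|v(s)\|_{L^\infty}\,ds$, since $(t-s)^{-1/2}\notin L^2$, so near $s=t$ you must also use the pointwise bound $\|v(s)\|_{L^\infty}\lesssim s^{-1/2}$, which the paper's fractional Gr\"onwall step implicitly relies on as well.
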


\begin{proof}
In light of \eqref{semi-group}, it follows from Duhamel's formula
\begin{align*}
W(t)&=e^{(t-t')\Delta}\mathbb P\div\phi_u(t')-\int_{t'}^te^{(t-s)\Delta}\mathbb P\div\Big(2\tilde v\odot W-2\tilde h\odot Z\Big)(s)\,ds,\\
Z(t)&=e^{(t-t')\Delta}\mathbb P\div\phi_B(t')\\
&\quad-\int_{t'}^te^{(t-s)\Delta}\mathbb P\div\Big(\tilde v\otimes Z-Z\otimes \tilde v+W\otimes \tilde h-\tilde h\otimes W\Big)(s)\,ds.
\end{align*}
Define
\[
\mathcal H(t)\coloneqq t^{1/2}\big(\|W(t)\|_{L^\infty}+\|Z(t)\|_{L^\infty}\big).
\]
Using the same splitting of $[t',t]$ with $t<\bar C$ as in the argument for the Navier-Stokes equations in \cite{CDP}, the heat kernel bound, and
\[
\|\tilde v(s)\|_{L^\infty}+\|\tilde h(s)\|_{L^\infty}\lesssim C_{U,H}+\|v(s)\|_{L^\infty}+\|h(s)\|_{L^\infty},
\]
we obtain
\begin{equation}\notag
\begin{split}
\mathcal H(t)&\lesssim (t')^{-1+\alpha}\|\Phi\|_{\mathcal Y}
+\int_{t'}^t\Big(s^{-1/2}+(t-s)^{-1/2}\Big)\\
&\quad\cdot\Big(C_{U,H}+\|v(s)\|_{L^\infty}+\|h(s)\|_{L^\infty}\Big)\mathcal H(s)\,ds.
\end{split}
\end{equation}
Now \eqref{v-critical-bounds} and \eqref{h-critical-bounds} yield the same logarithmic control used earlier, hence fractional Gr\"onwall implies
\[
\|W(t)\|_{L^\infty}+\|Z(t)\|_{L^\infty}\lesssim t^{-1/2}(t')^{-1+\alpha}(t/t')^{\epsilon/2}\|\Phi\|_{\mathcal Y}.
\]
Next, differentiating the system and using
\begin{equation}\label{heat}
\|e^{t\Delta}\nabla^m \mathbb P g\|_{C^{r}}\lesssim t^{-\frac{m+r-s}2}\| g\|_{C^{s}},
\end{equation}
we estimate $\nabla W,\nabla Z$ exactly in the same way (split at $(t+t')/2$, use the previous $L^\infty$ bound for $(W,Z)$, and the same coefficient bound for $(\tilde v,\tilde h)$). This gives
\[
(t-t')^{1/2}\big(\|\nabla W(t)\|_{C^\kappa}+\|\nabla Z(t)\|_{C^\kappa}\big)\lesssim t^{-1/2}(t')^{-1+\alpha}(t/t')^{\epsilon}\|\Phi\|_{\mathcal Y},
\]
after enlarging the implicit constant and replacing $\epsilon/2$ by $\epsilon$.
\end{proof}



With the preparations above, we are ready to apply the fixed point argument now.

\begin{proposition}\label{w-exists-fixed-point-proposition}
There exists $\varepsilon>0$ such that for any $\delta\in(0,\varepsilon)$ and sufficiently large $A>0$, we can find $w,\zeta\in B_X(0,\delta)$ with
\[
u^{1/N_0}=U^{1/N_0}+v+w,\qquad B^{1/N_0}=H^{1/N_0}+h+\zeta
\]
satisfying:
\begin{itemize}
\item [(i)]
 $(u^{1/N_0}, B^{1/N_0})$ solves \eqref{mhd} on $\mathbb R^d\times [0,\bar T]$;
\item [(ii)] 
$(u^{1/N_0},B^{1/N_0})$ can be extended to a classical solution on $[0,N_0^2(T-T_*)]$, provided $\bar C$ and $N_0$ are large depending on $(U,H)$, $T$ and $T_*$.
\end{itemize}
\end{proposition}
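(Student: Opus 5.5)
The plan is to recast the equations for $(w,\zeta)$ as a fixed point problem of Duhamel type, using the linearized propagator $\mathbb S(t,t')$ of \eqref{semi-group}. Write $\Phi_0=(\phi_{u,0},\phi_{B,0})$ with
\[
\phi_{u,0}=-\big(f_u+2U^{1/N_0}\odot v-2H^{1/N_0}\odot h\big),
\]
and let $\phi_{B,0}$ be the analogous combination $-(f_B+U^{1/N_0}\otimes h+v\otimes H^{1/N_0}-H^{1/N_0}\otimes v-h\otimes U^{1/N_0})$. For $(w,\zeta)\in\mathcal X$ define the quadratic tensor $\mathcal N(w,\zeta)=(-w\otimes w+\zeta\otimes\zeta,\,-w\otimes\zeta+\zeta\otimes w)$, and set
\[
\mathcal T(w,\zeta)(t)=\int_0^t \mathbb S(t,t')\big(\Phi_0+\mathcal N(w,\zeta)\big)\,dt'.
\]
Because $\mathbb S$ already contains all terms that are linear in $(w,\zeta)$ with coefficients $(\tilde v,\tilde h)$, a fixed point of $\mathcal T$ gives a mild solution of the perturbation system with zero initial data. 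That is item (i).

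The first key step is a linear bound $\|\int_0^t\mathbb S(t,t')\Phi\,dt'\|_{\mathcal X}\lesssim_{\bar C}\|\Phi\|_{\mathcal Y}$, obtained from Proposition~\ref{prop-semi}. For the $L^\infty$ part, integrate $t^{-1/2}(t')^{-1+\alpha}(t/t')^{\epsilon}$ over $t'\in(0,t)$; with $\epsilon<\alpha$ this gives $t^{-\frac{1-\alpha}{2}}\cdot t^{\frac{\alpha}{2}}$ up to constants, and since $t\le\bar C$ the extra power of $t$ is absorbed. For the gradient part, the factor $(t-t')^{-1/2}$ is integrable, so the same computation gives the $t^{-\frac{2-\alpha}{2}}$ weight. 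One subtlety: the gradient bound of Proposition~\ref{prop-semi} is singular at $t'=t$. I handle this by splitting the $t'$-integral at $t/2$ and using \eqref{heat} directly near $t'=t$, as in \cite{CDP}.

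Next, Lemma~\ref{product_X_Y_lemma} gives $\|\mathcal N(w,\zeta)-\mathcal N(w',\zeta')\|_{\mathcal Y}\lesssim(\|(w,\zeta)\|_{\mathcal X}+\|(w',\zeta')\|_{\mathcal X})\|(w-w',\zeta-\zeta')\|_{\mathcal X}$. So on a ball of radius $\delta<\varepsilon$, $\mathcal T$ is a contraction mapping the ball into itself, provided $\|\Phi_0\|_{\mathcal Y}\ll\delta$.

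The main obstacle is showing $\|\Phi_0\|_{\mathcal Y}$ is small. The $f_u,f_B$ contributions are fine: by \eqref{f_bound} with $n=0,1$ they are $O(\varepsilon_0)$ in $Y$ once $\bar C$ is fixed, and $\varepsilon_0$ can be chosen small relative to $\delta$. The cross terms such as $U^{1/N_0}\odot v$ are harder. Here $\|U^{1/N_0}\|_{L^\infty}\le C_{U,H}N_0^{-1}$ and $\|\nabla U^{1/N_0}\|\lesssim N_0^{-2}$, while \eqref{v-pointwise-bounds} gives $\|v\|_\infty\lesssim t^{-1/2}$ and $\|\nabla v\|_{C^\kappa}\lesssim t^{-1-\kappa/2}$. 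Hence $t^{1-\alpha}\|U^{1/N_0}\odot v\|_\infty\lesssim N_0^{-1}t^{1/2-\alpha}\lesssim N_0^{-1}\bar C$, and similarly for the gradient. Taking $N_0$ large depending on $\bar C$ and $C_{U,H}$ then makes these terms small. The $h$ and $H$ cross terms are treated identically. I would also track the dependence on $A$ through \eqref{v-critical-bounds}, since the constant in Proposition~\ref{prop-semi} must stay uniform for large $A$; this is where the $(\log A)^{-1}$ gain is used.

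For (ii), at $t=\bar T=\bar C$ (assuming $N_0^2(T-T_*)>\bar C$) we have
\[
\|v(\bar T)\|_{L^\infty}+\|h(\bar T)\|_{L^\infty}+\|w(\bar T)\|_{L^\infty}+\|\zeta(\bar T)\|_{L^\infty}\lesssim \bar C^{-1/2}+\delta\bar C^{-\frac{1-\alpha}{2}},
\]
with analogous decay for the $C^{1,\kappa}$ norms. Thus $(u^{1/N_0},B^{1/N_0})(\bar T)$ is a small smooth perturbation of the classical solution $(U^{1/N_0},H^{1/N_0})(\bar T)$. Rescaling back, the perturbation $N_0(v+w)(N_0\cdot,\bar T)$ has $L^\infty$ size $\lesssim N_0\bar C^{-1/2}$ at a time of order $N_0^{-2}\bar C$. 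I would show it is small in a critical norm such as $BMO^{-1}$ or $L^d$, where it is $O(\bar C^{-1/2})$. Standard stability of smooth MHD solutions under small critical perturbations (small-data well-posedness, as in \cite{MiaoYuanZhang2007}, applied to the difference equation with smooth coefficients $U,H$) then extends the solution classically up to $N_0^2(T-T_*)$. For this, $\bar C$ is chosen large depending on $(U,H),T,T_*$, and $N_0$ large afterwards. Smoothness for $t>0$ follows from parabolic bootstrapping, since $v,h$ are smooth for $t>0$.
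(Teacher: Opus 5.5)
Your part (i) is the paper's argument: the same map $-\int_0^t\mathbb S(t,t')\Psi\,dt'$, the same linear bound $\|\cdot\|_{\mathcal X}\lesssim_{\bar C}\|\cdot\|_{\mathcal Y}$ obtained by integrating Proposition~\ref{prop-semi}, Lemma~\ref{product_X_Y_lemma} for the quadratic terms, and smallness of the source through $\varepsilon_0$ (that is, $A$) and $N_0^{-1}$.

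For (ii) your route differs from the paper's. The paper arranges $\bar T=N_0^2(T-T_*)$, so the contraction itself covers the whole rescaled lifetime. That avoids a stability theorem, but it needs $C_{\bar C,U,H}$ to stay bounded when $\bar C$ is of size $N_0^2(T-T_*)$, for instance by using $\|U^{1/N_0}\|_\infty\le N_0^{-1}C_{U,H}$ in the Gr\"onwall step. The paper does not track this. You instead fix $\bar C$, stop at $\bar T=\bar C$, and continue by perturbing around $(U,H)$. That structure is sensible, but its key input is not established.

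\textbf{The gap.} You claim the perturbation at $\bar T$ is $O(\bar C^{-1/2})$ in a critical norm, and this fails as stated.
\begin{enumerate}
\item The $L^d$ option does not work. For $2\pi$-periodic $g$ and integer $N_0$ one has $\|N_0g(N_0\cdot)\|_{L^d(\mathbb T^d)}=N_0\|g\|_{L^d(\mathbb T^d)}$. So the periodic rescaling costs a factor $N_0$, and $N_0$ is chosen after $\bar C$.
\item In $BMO^{-1}$ the claim holds for $(v,h)(\bar C)$. These are mean-zero and $2\pi$-periodic, so $\|e^{s\Delta}v(\bar C)\|_\infty\lesssim e^{-cs}\|v(\bar C)\|_\infty$ controls Carleson boxes of every size.
\item It does not follow for $(w,\zeta)$ from the $X$-bound. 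They are only $2\pi N_0$-periodic: their equations contain $U^{1/N_0},H^{1/N_0}$, and $w\otimes w$ and $\tilde v\odot w$ feed low frequencies. The remaining interval has rescaled length about $N_0^2(T-T_*)$, so boxes of radius up to $N_0(T-T_*)^{1/2}$ matter. On such boxes, $\|w(\bar C)\|_\infty\le\delta\bar C^{-(1-\alpha)/2}$ only gives a bound of order $N_0\delta\bar C^{-(1-\alpha)/2}$.
\end{enumerate}

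\textbf{How to close it.} You need a scale-invariant estimate for $(w,\zeta)(\bar C)$, for example in $\dot W^{-1,\infty}\subset BMO^{-1}$. This norm is invariant under $g\mapsto N_0g(N_0\cdot)$. Write $w$ by Duhamel with the plain heat semigroup and use $\mathbb P\div F=\div(F-\Id\,\Delta^{-1}\div\div F)$. Combine $\|(w,\zeta)(s)\|_{C^\epsilon}\lesssim\delta s^{-\frac{1-\alpha}{2}-\frac{\epsilon}{2}}$, \eqref{v-pointwise-bounds}, \eqref{f_bound} and $\|U^{1/N_0}\|_\infty+\|H^{1/N_0}\|_\infty\lesssim N_0^{-1}$. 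This gives
\[
\|w(\bar C)\|_{\dot W^{-1,\infty}}+\|\zeta(\bar C)\|_{\dot W^{-1,\infty}}\lesssim_{\alpha,U,H}(\delta+\varepsilon_0+N_0^{-1})\,\bar C .
\]
This is small once $\delta$, $A$ and $N_0$ are chosen after $\bar C$. That ordering is consistent with your quantifiers, since $\varepsilon$ already depends on $\bar C$.

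Second, \cite{MiaoYuanZhang2007} is small-data theory around zero, and $(U,H)$ is not small on $[0,T-T_*]$. You need a perturbative version. One option is the Koch--Tataru fixed point for the difference equation on subintervals of length $\tau\lesssim C_{U,H}^{-2}$, where the linear terms with coefficients $U,H$ cost $\tau^{1/2}C_{U,H}$. Iterate finitely many times, propagating smallness. With these two additions your continuation argument closes.
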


\begin{proof}
For $(W,Z)\in\mathcal X$, define
\begin{align*}
\Phi_u(W,Z)&\coloneqq W\otimes W-Z\otimes Z+f_u+2U^{1/N_0}\odot v-2H^{1/N_0}\odot h,\\
\Phi_B(W,Z)&\coloneqq W\otimes Z-Z\otimes W+f_B+U^{1/N_0}\otimes h+v\otimes H^{1/N_0}\\
&\quad-H^{1/N_0}\otimes v-h\otimes U^{1/N_0},
\end{align*}
and $\Psi(W,Z)\coloneqq (\Phi_u(W,Z),\Phi_B(W,Z))\in\mathcal Y$. Set
\[
\mathcal F(W,Z)(t)\coloneqq-\int_0^t\mathbb S(t,t')\Psi(W,Z)\,dt'.
\]

Fix $\epsilon\in(0,\alpha)$. Applying Proposition~\ref{prop-semi} gives
\begin{align*}
\|\mathbb S(t,t')\Psi\|_{L^\infty}
&\lesssim_{\bar C} t^{-1/2}(t')^{-1+\alpha}(t/t')^\epsilon\|\Psi\|_{\mathcal Y},\\
\|\nabla\mathbb S(t,t')\Psi\|_{C^\kappa}
&\lesssim_{\bar C} (t-t')^{-1/2}t^{-1/2}(t')^{-1+\alpha}(t/t')^\epsilon\|\Psi\|_{\mathcal Y}
\end{align*}
for $0<t'\le t\le \bar T$.
It then follows
\begin{align*}
\|\mathcal F(W,Z)(t)\|_{L^\infty}
&\lesssim_{\bar C} t^{-1/2+\epsilon}\int_0^t (t')^{-1+\alpha-\epsilon}\,dt'\,\|\Psi(W,Z)\|_{\mathcal Y}\\
&\lesssim_{\bar C} t^{-1/2+\alpha}\|\Psi(W,Z)\|_{\mathcal Y},
\end{align*}
\begin{align*}
\|\nabla\mathcal F(W,Z)(t)\|_{C^\kappa}
&\lesssim_{\bar C} t^{-1/2+\epsilon}\int_0^t (t-t')^{-1/2}(t')^{-1+\alpha-\epsilon}\,dt'\,\|\Psi(W,Z)\|_{\mathcal Y}\\
&\lesssim_{\bar C} t^{-1+\alpha}\|\Psi(W,Z)\|_{\mathcal Y}.
\end{align*}
We thus conclude
\[
\|\mathcal F(W,Z)\|_{\mathcal X}\lesssim_{\bar C}\|\Psi(W,Z)\|_{\mathcal Y}.
\]
In view of Lemma~\ref{product_X_Y_lemma}, we have
\[
\|W\otimes W-Z\otimes Z\|_Y+\|W\otimes Z-Z\otimes W\|_Y\lesssim \|(W,Z)\|_{\mathcal X}^2.
\]
It follows from \eqref{f_bound} and the definition of $Y$ \eqref{def-Y} that
\[
\|f_u\|_Y+\|f_B\|_Y\lesssim \varepsilon_0.
\]
On the other hand, since $U,H$ are smooth, we have
\[
\|U^{1/N_0}\|_{L_t^\infty C_x^{1,\kappa}}+\|H^{1/N_0}\|_{L_t^\infty C_x^{1,\kappa}}\lesssim N_0^{-1}C_{U,H},
\]
and by \eqref{v-pointwise-bounds},
\begin{align*}
\|U^{1/N_0}\odot v\|_Y+\|H^{1/N_0}\odot h\|_Y+\|U^{1/N_0}\otimes h\|_Y+\|v\otimes H^{1/N_0}\|_Y\\
+\|H^{1/N_0}\otimes v\|_Y+\|h\otimes U^{1/N_0}\|_Y
\lesssim_{\bar C,U,H} N_0^{-1}.
\end{align*}
Hence we infer
\[
\|\Psi(W,Z)\|_{\mathcal Y}\le C_{\bar C,U,H}\Big(\|(W,Z)\|_{\mathcal X}^2+\varepsilon_0+N_0^{-1}\Big),
\]
which immediately gives
\[
\|\mathcal F(W,Z)\|_{\mathcal X}\le C_{\bar C,U,H}\Big(\|(W,Z)\|_{\mathcal X}^2+\varepsilon_0+N_0^{-1}\Big).
\]

Now take $(W_i,Z_i)\in B_{\mathcal X}(0,\delta)$, $i=1,2$. Expanding quadratic differences and using Lemma~\ref{product_X_Y_lemma} yields
\[
\|\Psi(W_1,Z_1)-\Psi(W_2,Z_2)\|_{\mathcal Y}
\lesssim \delta\,\|(W_1-W_2,Z_1-Z_2)\|_{\mathcal X}.
\]
Applying the linear estimate again gives
\[
\|\mathcal F(W_1,Z_1)-\mathcal F(W_2,Z_2)\|_{\mathcal X}
\le C_{\bar C,U,H}\delta\,\|(W_1-W_2,Z_1-Z_2)\|_{\mathcal X}.
\]

Choose $\delta>0$ such that $C_{\bar C,U,H}\delta\le \frac12$. Then choose $A\gg1$ (hence $\varepsilon_0$ small through \eqref{f_bound}) and $N_0$ large so that
\[
C_{\bar C,U,H}\big(\delta^2+\varepsilon_0+N_0^{-1}\big)\le \delta.
\]
Therefore $\mathcal F$ is a contraction on $B_{\mathcal X}(0,\delta)$ and has a unique fixed point $(w,\zeta)\in B_{\mathcal X}(0,\delta)$.

By construction, $(w,\zeta)$ solves the coupled corrector equations, hence $(u^{1/N_0},B^{1/N_0})$ solves \eqref{mhd} on $[0,\bar T]$. Finally choose $\bar C$ and then $N_0$ large so that
\[
\bar T=\min\{N_0^2(T-T_*),\bar C\}=N_0^2(T-T_*).
\]
Rescaling and translation back yields a classical solution on $[0,T]$.
\end{proof}

\section{Proof of main results}\label{sec:proof}

We prove Theorem \ref{main-thm} and Theorem \ref{thm-non-unique} in this section. We first claim that the constructed pair $(u,B)$ in Section \ref{sec:corrector} is a weak solution of \eqref{mhd}. 

\begin{lemma}\label{le-weak}
Assume a pair $(u,B)$ satisfies
\begin{itemize}
\item [(i)]
$u,B\in L^2(\mathbb{T}^d \times [0,T])\cap L^\infty([0,T];H^s(\mathbb{T}^d))$ for some $s\in \mathbb{R}$; 
\item [(ii)]
$(u,B)|_{[0,T_*)}$ and $(u,B)|_{(T_*,T]}$ are classical solutions of \eqref{mhd} respectively on $[0,T_*)$ and $(T_*,T]$;
\item [(iii)]
the two side limits coincide, i.e.
\[
\lim_{t\to T_*^-} (u(t),B(t)) = \lim_{t\to T_*^+} (u(t), B(t)), \qquad \text{in} \quad \mathcal{D}'(\mathbb{T}^d),
\]
\end{itemize}
Then $(u,B)$ is a weak solution of \eqref{mhd} on $[0,T]$.
\end{lemma}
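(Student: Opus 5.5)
The plan is to verify the weak formulation of Definition \ref{def:weak_solutions} directly, splitting the time integral at $T_*$ and using the continuity in $\mathcal D'$ at $T_*$ to cancel the boundary terms. Fix a test function $\varphi\in\mathcal D_T$. By (i), every integrand in the weak formulation is in $L^1(\mathbb T^d\times[0,T])$: the terms $u\cdot\partial_t\varphi$ and $u\cdot\Delta\varphi$ are controlled by $\|u\|_{L^2}$, and the quadratic terms $u\cdot(u\cdot\nabla\varphi)$, $B\cdot(B\cdot\nabla\varphi)$, $u\cdot(B\cdot\nabla\varphi)$ by $\|u\|_{L^2}^2+\|B\|_{L^2}^2$. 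Hence
\[
\int_0^T=\lim_{\tau\to0^+}\Big(\int_0^{T_*-\tau}+\int_{T_*+\tau}^T\Big)
\]
by dominated convergence. (If $T_*=0$ the first piece is absent, and its role is played by the initial data.)

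On each closed subinterval $[0,T_*-\tau]$ and $[T_*+\tau,T]$, hypothesis (ii) says $(u,B)$ is smooth and satisfies \eqref{mhd} pointwise with some pressure. We therefore multiply the $u$-equation by $\varphi$ and integrate by parts in $x$. Since $\div\varphi=0$, the pressure term drops out, and $\div u=0$ lets us write $\div(u\otimes u)\cdot\varphi\to -u\cdot(u\cdot\nabla\varphi)$. For the $B$-equation, $\div(u\otimes B-B\otimes u)\cdot\varphi$ integrates to $-B\cdot(u\cdot\nabla\varphi)+u\cdot(B\cdot\nabla\varphi)$ once $\div B=0$ is used. Here we note that the divergence-free property of $B$ is propagated by the classical equation on each interval, or else holds in $\mathcal D'$ after the matching argument below. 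Integrating by parts in $t$ on each subinterval then produces boundary terms
\[
\int u(T_*-\tau)\cdot\varphi(T_*-\tau)\,dx-\int u_0\cdot\varphi(0)\,dx
\]
and
\[
\int u(T)\cdot\varphi(T)\,dx-\int u(T_*+\tau)\cdot\varphi(T_*+\tau)\,dx,
\]
where $\varphi(T)=0$. The analogous boundary terms arise for $B$.

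The main obstacle is passing $\tau\to0$ in the interface terms $\int u(T_*\pm\tau)\cdot\varphi(T_*\pm\tau)\,dx$, since $u$ blows up in $L^\infty$ at $T_*^+$ and converges only in $\mathcal D'$. We handle this by writing
\[
u(T_*\pm\tau)\cdot\varphi(T_*\pm\tau)=u(T_*\pm\tau)\cdot\varphi(T_*)+u(T_*\pm\tau)\cdot\big(\varphi(T_*\pm\tau)-\varphi(T_*)\big).
\]
The first term converges to the common limit $\langle u(T_*),\varphi(T_*)\rangle$ by (iii). For the second term we use the uniform bound $u\in L^\infty_tH^s$ from (i): it is bounded by $\|u(T_*\pm\tau)\|_{H^s}\|\varphi(T_*\pm\tau)-\varphi(T_*)\|_{H^{-s}}\lesssim\tau\to0$, because $\varphi$ is smooth in time with values in every Sobolev space. (Pointwise-in-time values make sense since $u$ is continuous on each open piece.) The two interface contributions therefore cancel in the limit, leaving exactly the identities of Definition \ref{def:weak_solutions}. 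The weak divergence-free condition for a.e.\ $t$ follows from (ii) on each piece.
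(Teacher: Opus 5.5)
Your argument is correct and follows essentially the route the paper intends (it defers to Lemma 7.1 of \cite{CDP}): split the time integral at $T_*$, integrate by parts on each classical piece, and let the two interface terms cancel. The cancellation uses (iii) against the fixed test function $\varphi(T_*)$, together with the uniform $H^s$ bound to absorb the time variation $\varphi(T_*\pm\tau)-\varphi(T_*)$. One small remark: the spatial integrations by parts of $\div(u\otimes u-B\otimes B)$ and $\div(u\otimes B-B\otimes u)$ against $\varphi$ need neither $\div u=0$ nor $\div B=0$, and Definition~\ref{def:weak_solutions} does not require $\div B=0$, so your aside about propagating $\div B=0$ can simply be dropped.
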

The proof follows the lines of that of Lemma 7.1 from \cite{CDP}. It immediately implies $(u,B)$ constructed above is a weak solution of \eqref{mhd}. In addition, we show that $(u,B)$ is in some borderline spaces. 

\begin{proposition}\label{prop:borderline}
    The pair $(u,B)$ defined in Section \ref{sec:corrector} satisfies
\begin{itemize}
\item [(i)]
  $u,B\in L_t^{2,\infty}L_x^\infty\cap L_{t}^2L_x^p, \quad p<\infty$;
    \item [(ii)]
both $u$ and $B$ belong to the Koch--Tataru space $X_{T-T_*}$ and are weak-* continuous in time with values in $BMO^{-1}$. 
    \item [(iii)]
  if $d\geq3$, $u$ and $B$ are in the improved space
    \[
   u,B\in L_t^\infty \dot W^{-1,\infty}_x.
    \]
\end{itemize}
\end{proposition}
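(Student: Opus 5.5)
We work with the decomposition $u=U+v^{N_0}+w^{N_0}$, $B=H+h^{N_0}+\zeta^{N_0}$ (after translation so that $T_*=0$) and estimate each of the three pieces separately in each space. The plan is to reduce everything to the rescaled principal pair $(v,h)$, because $U,H$ are smooth and $(w,\zeta)$ is subcritical. The $L^p$ norms and the Koch--Tataru norm behave well under $f\mapsto f^{N_0}$ up to constants depending on $N_0$. For $(w,\zeta)\in B_{\mathcal X}(0,\delta)$ we have $\|w(t)\|_{L^\infty}\lesssim \delta t^{-\frac{1-\alpha}{2}}$ and $\|\nabla w(t)\|_{L^\infty}\lesssim\delta t^{-\frac{2-\alpha}2}$, and these are strictly better than critical. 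Hence $w,\zeta\in L^2_tL^\infty_x$, they lie in $X_{T}$ (using $t^{1/2}\|w\|_\infty\lesssim t^{\alpha/2}\to0$ and the Carleson part of the norm), and they are weak-$*$ continuous into $BMO^{-1}$ at $t=0$ with limit $0$.

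\textbf{Item (i).} For the principal part, \eqref{v-pointwise-bounds} with $n=0$ gives $\|v(t)\|_{L^\infty}+\|h(t)\|_{L^\infty}\lesssim t^{-1/2}$, which is exactly $L^{2,\infty}_tL^\infty_x$. For $L^2_tL^p_x$ with $p<\infty$ we use \eqref{vk-pointwise-bounds} and sum over $k$:
\[
\|v(t)\|_{L^p}\lesssim \sum_k\Big((t/N_{1,k+1})^\alpha t^{-1/2}+2^{-k/p}t^{-1/2}\mathbbm 1_{\{t\lesssim N_{J_d,k}^{-2}\}}+2^{-k/p}\sum_j N_{j,k}e^{-N_{j,k}^2t}\Big)+1.
\]
Here we return to \eqref{vk_bar_estimate} to keep the exponential factor. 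Squaring and integrating in $t$, the $k$-th term contributes $\int \big(2^{-k/p}\sum_j N_{j,k}e^{-N_{j,k}^2t}\big)^2dt\lesssim 2^{-2k/p}$. The cross terms between different $k$ are controlled by the super-geometric separation of the scales $N_{1,k}$, and the series $\sum_k 2^{-2k/p}$ converges. The $\alpha$-error terms are integrable because the geometric decay in $N_{1,k+1}^{-\alpha}$ beats $t^{-1+2\alpha}$. The same argument applies to $h$ through \eqref{alt_hk_bound}.

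\textbf{Item (ii).} The Koch--Tataru space requires $\sup_t t^{1/2}\|u\|_\infty<\infty$, which follows from the previous step, together with the Carleson condition $\sup_{x,R}R^{-d}\int_0^{R^2}\!\int_{B_R(x)}|u|^2\lesssim 1$. For the Carleson condition we use the volume bounds \eqref{Omega_volume_estimate} and \eqref{cube_intersection_volume_bound}: on a cube $Q$ of side $R\sim M_{1,k_0}^{-1}$, the level-$k$ piece is supported in $\Omega_k$ with $|\Omega_k\cap Q|\le 2^{-(k-k_0)}|Q|$ and has $\int_0^\infty\|\bar v_k\|_\infty^2dt\lesssim1$. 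Summing the resulting geometric series gives the bound. The scales $k<k_0$ are handled by the $L^\infty$ bound and the smallness of $R$, and the errors $v_k-\bar v_k$ are handled by Propositions \ref{difference_estimate_proposition}--\ref{difference_estimate_H}. Weak-$*$ continuity in $BMO^{-1}$ for $t>0$ is clear from smoothness. At $t=0$ we write $v=\div R$ and show that $R(t)$ is bounded in $BMO$ uniformly in time; this is the bound needed for item (iii) and is sharper in $d\ge3$. We then show that $R(t)\to R(0)$ in $\mathcal D'$, where by construction $R(0)$ is the tensor field generating the initial profile, so that convergence against $H^1$-atoms follows by density.

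\textbf{Item (iii).} For $d\ge3$ we aim for $\sup_t\|R(t)\|_{L^\infty}\lesssim1$ and likewise for $H$. We write $R=\sum_k R_k$ and split each term as $\bar R_k+(R_k-\bar R_k)$. Since $\|\bar R_k(t)\|_\infty\lesssim e^{-N_{1,k}^2t}$ and the cylinders at a fixed level are disjoint, the naive sum is $\sum_k e^{-N_{1,k}^2t}$, which diverges as $t\to0$. This is the main obstacle. We expect to overcome it by using the disjoint-support/measure argument from \cite{CDP}: each $\bar R_k$ is supported in $\Omega_k$ and these sets nest, so at each point only boundedly many levels contribute at size $O(1)$ simultaneously. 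Meanwhile the $n=0$ bounds in Propositions \ref{difference_estimate_proposition} and \ref{difference_estimate_H} give $\sum_k(\varepsilon_0N_{1,k}^{-\alpha}+\mathbbm1_{t\le t_k})$. The tail $\sum_k\mathbbm1_{t\le t_k}$ is again unbounded, so we must sharpen it by using the cutoff structure. If this sharpening fails, we will fall back on the $d=2$ type weaker statement, which does not require item (iii). $U$, $H$ and the corrections $w,\zeta$ are harmless here, since $\|\div^{-1}w\|_\infty\lesssim\|w\|_{L^\infty}^{1-\theta}\cdots$ is bounded by the subcritical decay.
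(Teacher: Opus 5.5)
Items (i) and (ii) are essentially in order. Going back to \eqref{vk_bar_estimate} to keep the factor $e^{-N_{j,k}^2t}$ is what makes the $L^2_tL^p_x$ bound work, since the bare $2^{-k/p}t^{-1/2}$ in \eqref{vk-pointwise-bounds} is not square integrable at $t=0$. For the same reason, drop the term $2^{-k/p}t^{-1/2}\mathbbm 1_{\{t\lesssim N_{J_d,k}^{-2}\}}$ from your display; it does not occur in \eqref{alt_vk_bound}. Your direct Carleson check on $\bar v_k$ via \eqref{cube_intersection_volume_bound} is a reasonable substitute for the paper's splitting into a free heat flow of $BMO^{-1}$ data plus subcritical remainders.

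The genuine gap is item (iii). You do not prove it, and the repair you propose cannot work. The sets $\Omega_k$ are decreasing, so a point of $\Omega_K$ lies in every $\Omega_k$ with $k\le K$. Nesting therefore gives no pointwise bound on the number of active levels, only the $2^{-k/p}$ gains for $p<\infty$.

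The divergence you found comes from splitting $R_k=\bar R_k+(R_k-\bar R_k)$ at small times. For $t\ll N_{1,k+1}^{-2}$ both pieces have size one and cancel, because $R_k$ in \eqref{tensor-RH} is a Duhamel integral starting at $s=0$. Hence $R_k(0)=0$, so $R(0)=0$ and $v(0)=0$; there is no nontrivial initial profile, contrary to what your item (ii) assumes.

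The missing idea is to use this directly. Since $\mathcal R\mathbb P\div$ is Calder\'on--Zygmund, $\|R_k(t)\|_\infty\lesssim_\varepsilon tN_{J_d,k+1}^{2+\varepsilon}$. At a given $t$ the levels then fall into three groups:
\begin{enumerate}
\item Levels not yet created by the inverse cascade ($tN_{J_d,k+1}^{2+2\varepsilon}\le1$) contribute at most $\sum_kN_{J_d,k+1}^{-\varepsilon}\lesssim1$.
\item Levels already decayed ($N_{1,k}^2t>1$, so $t>t_k$) satisfy $\|R_k(t)\|_\infty\lesssim e^{-N_{1,k}^2t}+N_{1,k}^{-\alpha}$ by Proposition~\ref{difference_estimate_proposition}, which is summable in $k$.
\item In the window $N_{J_d,k+1}^{-2-2\varepsilon}<t\le N_{1,k}^{-2}$ one has $\|R_k(t)\|_\infty\lesssim1$ for $d\ge3$, and \eqref{N_definition} allows only $O(1)$ such $k$.
\end{enumerate}
The same argument handles $H_k$. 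The mechanism is localization in time of the cascade, not spatial support.

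Two further points. First, for the corrector, membership in $B_X(0,\delta)$ does not give $L^\infty_t\dot W^{-1,\infty}_x$: $t^{-(1-\alpha)/2}\sin(x_1)$ times a fixed unit vector orthogonal to the first axis obeys the $X$ bounds but not that one. No interpolation between $\|w\|_\infty$ and $\|\nabla w\|_{C^\kappa}$ controls a lower-order norm. You must use that $w,\zeta$ are Duhamel integrals of $\mathbb P\div$ of a tensor whose $C^\varepsilon$ norm is integrable in time; this bounds $\|w(t)\|_{\dot W^{-1,\infty}}$ by the time integral of that $C^\varepsilon$ norm, as the paper does.

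Second, your argument for weak-$*$ continuity at $t=0$ rests on a uniform BMO bound for $R(t)$, which is exactly the missing estimate. In $d=2$ nothing available supplies it, since the $n=0$ difference bounds lose $N_{J_d,k}^{\varepsilon}$. The paper instead derives continuity from membership in $X_{T-T_*}$.
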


\begin{proof}
Recall from Section~\ref{sec:corrector},
\[
u=U+v^{N_0}+w^{N_0},\qquad B=H+h^{N_0}+\zeta^{N_0}.
\]
All norms in the proposition are critical (or subcritical) under the MHD scaling, so it is enough to estimate the rescaled pair
\[
u^{1/N_0}=U^{1/N_0}+v+w,\qquad B^{1/N_0}=H^{1/N_0}+h+\zeta.
\]

\textbf{Step 1:} $\mathbf{L_t^\infty\dot W^{-1,\infty}_x}$ for both $u,B$ when $d\ge3$.
By Proposition~\ref{w-exists-fixed-point-proposition}, $(w,\zeta)\in B_X(0,\delta)\times B_X(0,\delta)$. Using the coupled mild corrector equations and the heat estimate in negative order,
\begin{align*}
\|w(t)\|_{\dot W^{-1,\infty}}+\|\zeta(t)\|_{\dot W^{-1,\infty}}
\lesssim_\varepsilon \int_0^t \big(\|\mathcal N_u(s)\|_{C^\varepsilon}+\|\mathcal N_B(s)\|_{C^\varepsilon}\big)\,ds.
\end{align*}
By Lemma~\ref{product_X_Y_lemma}, \eqref{v-pointwise-bounds}, \eqref{f_bound}, and smoothness of $(U,H)$,
\[
\|\mathcal N_u(s)\|_{C^\varepsilon}+\|\mathcal N_B(s)\|_{C^\varepsilon}
\lesssim s^{-1+\alpha-\varepsilon}+s^{-\frac12(1+\varepsilon)}+1,
\]
hence
\[
\|w(t)\|_{\dot W^{-1,\infty}}+\|\zeta(t)\|_{\dot W^{-1,\infty}}
\lesssim t^{\alpha-\varepsilon}+t,
\]
which implies $w,\zeta\in L_t^\infty\dot W^{-1,\infty}_x$.

For the principal pair, write
\[
v_k=\div R_k,\quad h_k=\div H_k,\quad \bar v_k=\div \bar R_k,\quad \bar h_k=\div \bar H_k.
\]
It follows from \eqref{eq:Dpsi_bounds},
\[
\|\bar R_k(t)\|_\infty+\|\bar H_k(t)\|_\infty\lesssim \sum_j e^{-N_{j,k}^2t}.
\]
Employing Proposition~\ref{difference_estimate_proposition} and Proposition~\ref{difference_estimate_H} yields
\[
\|R_k(t)\|_\infty+\|H_k(t)\|_\infty
\lesssim N_{1,k+1}^{-\alpha}+\mathbbm1_{t\le N_{1,k}^{-2}},
\]
and the mild solution formulas also give
\[
\|R_k(t)\|_\infty+\|H_k(t)\|_\infty\lesssim_\varepsilon tN_{J_d,k+1}^{2+\varepsilon}.
\]
Therefore
\begin{align*}
\|R(t)\|_\infty+\|H(t)\|_\infty
&\lesssim \sum_{k:tN_{J_d,k+1}^{2+2\varepsilon}\le1}tN_{J_d,k+1}^{2+\varepsilon}
+\sum_{\substack{k:tN_{J_d,k+1}^{2+2\varepsilon}>1\\ \wedge\, N_{1,k}^2t\le1}}1
+\sum_{k:N_{1,k}^2t>1}N_{1,k+1}^{-\alpha}\\
&\lesssim 1+\#\{k:N_{J_d,k+1}^{-2-2\varepsilon}<t\le N_{1,k}^{-2}\}+t^{b\alpha/2}.
\end{align*}
Using \eqref{N_definition}, the right-hand side is uniformly bounded in $t$, so
\[
v,h\in L_t^\infty\dot W^{-1,\infty}_x.
\]
Adding $(U^{1/N_0},H^{1/N_0})$ yields
\[
u^{1/N_0},B^{1/N_0}\in L_t^\infty\dot W^{-1,\infty}_x,\qquad d\ge3.
\]

\textbf{Step 2:} $\mathbf{X_{T-T_*}}$ (hence weak-* continuity into $\mathbf{BMO^{-1}}$) for both $u,B$.
Decompose
\begin{align*}
u^{1/N_0}&=U^{1/N_0}+w+u_1+u_2+u_3,\\
B^{1/N_0}&=H^{1/N_0}+\zeta+b_1+b_2+b_3,
\end{align*}
with
\begin{align*}
u_1&=\sum_{j;k\in \Lambda_u\cup\Lambda_B}\big(e^{t\Delta}N_{j,k}\Delta\psi_{j,k}+\bar v_{j,k}\big),&
u_2&=v-\bar v,&
u_3&=-e^{t\Delta}\sum_{j;k\in \Lambda_u\cup\Lambda_B}N_{j,k}\Delta\psi_{j,k},\\
b_1&=\sum_{j; k\in \Lambda_B}\big(e^{t\Delta}N_{j,k}\Delta\psi_{j,k,B}+\bar h_{j,k}\big),&
b_2&=h-\bar h,&
b_3&=-e^{t\Delta}\sum_{j; k\in \Lambda_B}N_{j,k}\Delta\psi_{j,k,B}.
\end{align*}
By smoothness, $U,H\in X_{T-T_*}$; by Proposition~\ref{w-exists-fixed-point-proposition}, $w,\zeta\in X_{T-T_*}$. For $(u_1,b_1)$, the commutator estimate of Lemma~\ref{l:commutator} gives
\[
\|u_1(t)\|_\infty+\|b_1(t)\|_\infty\lesssim t^{-1/2+\epsilon}+1.
\]
For $(u_2,b_2)$, Proposition~\ref{difference_estimate_proposition} and Proposition~\ref{difference_estimate_H} imply the same $t^{-1/2+\epsilon}$ control. Applying the fact
\begin{align}\label{X_T_criterion}
\sup_{t\in(0,T-T_*]}t^{\frac12-\epsilon}\|f(t)\|_{L^\infty}<\infty\Longrightarrow f\in X_{T-T_*},
\end{align}
we conclude $(u_1,b_1)$ and $(u_2,b_2)$ are in $X_{T-T_*}$.
For $(u_3,b_3)$, the standard BMO argument based on \eqref{eq:Dpsi_bounds}, \eqref{Omega_volume_estimate}, and \eqref{N_and_M_ordering} gives $(u_3,b_3)\in X_{T-T_*}$. Hence
\[
u^{1/N_0},B^{1/N_0}\in X_{T-T_*}.
\]
Therefore each component is weak-* continuous in time into $BMO^{-1}$.

\textbf{Step 3:} $\mathbf{L_t^{2,\infty}L_x^\infty\cap L_t^2L_x^p}$ for both $u,B$.
From the $X$-bounds,
\[
\|U^{1/N_0}(t)\|_\infty+\|H^{1/N_0}(t)\|_\infty+\|w(t)\|_\infty+\|\zeta(t)\|_\infty\lesssim t^{-1/2+\epsilon}.
\]
From \eqref{v-pointwise-bounds} (with $n=0$),
\[
v,h\in L_t^{2,\infty}L_x^\infty.
\]
Hence $u^{1/N_0},B^{1/N_0}\in L_t^{2,\infty}L_x^\infty$. Also, by \eqref{vk-pointwise-bounds}, for each $k$ and all $p<\infty$,
\[
\|v_k\|_{L_t^2L_x^p}+\|h_k\|_{L_t^2L_x^p}\lesssim N_{1,k}^{-\alpha}+2^{-k/p}.
\]
Summing in $k$ gives $v,h\in L_t^2L_x^p$, and therefore
\[
u,B\in L_t^{2,\infty}L_x^\infty\cap L_t^2L_x^p,\qquad \forall p<\infty.
\]
This completes the proof of the proposition.
\end{proof}

\textbf{Proof of Theorem \ref{main-thm}:}
From Section~\ref{sec:corrector}, the pair $(u,B)$ defined by
\[
u=U+v^{N_0}+w^{N_0},\qquad B=H+h^{N_0}+\zeta^{N_0},
\]
solves \eqref{mhd} in the classical sense away from the blowup time $T_*$. Lemma~\ref{le-weak} implies that $(u,B)$ is a weak solution of \eqref{mhd} with initial data
\[
(u_0,B_0)=\big(U(0,\cdot),H(0,\cdot)\big).
\]

The upper bound follows from smoothness of $(U,H)$ and \eqref{v-pointwise-bounds},
\[
\|U(t)\|_{L^\infty}+\|H(t)\|_{L^\infty}+\|v^{N_0}(t)\|_{L^\infty}+\|h^{N_0}(t)\|_{L^\infty}\lesssim t^{-1/2}.
\]
For the perturbation, we conclude by the fixed-point bound in $X$,
\[
\|w^{N_0}(t)\|_{L^\infty}+\|\zeta^{N_0}(t)\|_{L^\infty}\lesssim t^{-1/2+\alpha/2}\lesssim t^{-1/2},
\]
hence
\[
\|u(t)\|_{L^\infty}+\|B(t)\|_{L^\infty}\lesssim t^{-1/2},\qquad 0<t<T.
\]

Regarding the lower bound, there is a sequence $t_n\to0^+$ along which the principal part $(v,h)$ satisfies
\[
\|v(t_n)\|_{L^\infty}\sim e^{-1}t_n^{-1/2}, \quad \|h(t_n)\|_{L^\infty}\sim e^{-1}t_n^{-1/2}.
\]
Therefore
\[
\|u(t_n)\|_{L^\infty}
\ge \|v^{N_0}(t_n)\|_{L^\infty}-\|U(t_n)\|_{L^\infty}-\|w^{N_0}(t_n)\|_{L^\infty}
\gtrsim t_n^{-1/2},
\]
and similarly $\|B(t_n)\|_{L^\infty}\gtrsim t_n^{-1/2}$.
So both $u$ and $B$ blow up instantaneously with critical rate. The borderline-space conclusions for $(u,B)$ follow from Proposition~\ref{prop:borderline}.

\textbf{Proof of Theorem \ref{thm-non-unique}:}
In Section~\ref{sec:corrector}, we construct a solution $(u^{(\sigma)},B^{(\sigma)})$ for each scaling parameter $N_0>0$, with $\sigma=C/N_0\in(0,1]$ for some large constant $C$. The constant $C$ can be chosen large enough in order to have Proposition~\ref{w-exists-fixed-point-proposition} hold with $N_0=C$.

We then define
\begin{align*}
    (u^{(\sigma)},B^{(\sigma)})=\begin{cases}
        (u,B)|_{N_0=C/\sigma}=\left(U+v^{C/\sigma}+w^{C/\sigma}, H+h^{C/\sigma}+\zeta^{C/\sigma}\right),&\sigma\in(0,1],\\
        (U,H),&\sigma=0
    \end{cases}
\end{align*}
and obtain a family of blowup solutions.

The regularity properties of $u^{(\sigma)}$ and $B^{(\sigma)}$ claimed in the theorem follow from Proposition \ref{prop:borderline}.

\qed

\appendix

\section{}
We collect the following technical lemmas proved in \cite{CDP}.
\begin{lemma}[Commutator estimate for $e^{t\Delta}$]\label{l:commutator}
    Let $a\in C^\infty(\mathbb T^d)$ and $\xi\in\mathbb Z^d\setminus \{0\}$, and define
    \begin{align*}
        A_i\coloneqq |\xi|^{-i}\|\nabla^ia\|_{L^\infty}.
    \end{align*}
    Then
    \begin{equation}\begin{aligned}\label{commutator_inequality}
        &\|\nabla^n[e^{t\Delta},a(x)]\sin(\xi\cdot x)\|_{L^\infty}\\
        &\qquad\lesssim_{m,n} |\xi|^{n}\sum_{i=0}^n \left((A_i^{1-1/m}A_{m+i}^{1/m}+A_i^{1-2/m}A_{m+i}^{2/m})e^{-|\xi|^2t/4}+A_{m+i}\right)
        \end{aligned}\end{equation}
        for $m\geq3+n$. Furthermore,
        \begin{align}\label{heat-decay-estimate}
             \|\nabla^n e^{t\Delta}(a(x)\sin(\xi\cdot x))\|_\infty&\lesssim_{m,n} |\xi|^{n}\left(\sum_{i=0}^{n}A_i e^{-|\xi|^2t/4}+\sum_{i=m}^{n+m}A_{i}\right).
        \end{align}
\end{lemma}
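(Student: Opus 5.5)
The approach is to conjugate by the phase and reduce to a multiplier estimate for a low-frequency symbol. Write $\sin(\xi\cdot x)=\mathrm{Im}\,e^{i\xi\cdot x}$ and note the exact identity
\[
e^{t\Delta}\big(a\,e^{i\xi\cdot x}\big)=e^{i\xi\cdot x}e^{-t|\xi|^2}\,e^{t(\Delta+2i\xi\cdot\nabla)}a .
\]
This gives
\[
[e^{t\Delta},a]e^{i\xi\cdot x}=e^{i\xi\cdot x}\,m_t(D)a,\qquad m_t(\zeta)=e^{-t|\xi+\zeta|^2}-e^{-t|\xi|^2}.
\]
So everything reduces to an $L^\infty$ bound for the Fourier multiplier $m_t(D)$ acting on $a$. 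Derivatives $\nabla^n$ falling on $e^{i\xi\cdot x}$ produce the factor $|\xi|^n$, while those falling on $m_t(D)a$ are handled by replacing $a$ with $\nabla^i a$. Since $|\xi|^{n-i}\|\nabla^i\cdot\|$ matches $|\xi|^n A_i$, this explains the sum over $i\le n$. It therefore suffices to treat $n=0$, with $(A_0,A_m)$ replaced by $(A_i,A_{m+i})$.

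Next I split $a=P_{<\lambda}a+P_{\ge\lambda}a$ with a Littlewood--Paley cutoff at a scale $\lambda\le|\xi|/2$, to be optimized.

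\emph{High part.} Both $e^{t\Delta}$ and multiplication by $e^{-t|\xi|^2}$ are bounded on $L^\infty$, and Bernstein gives $\|P_{\ge\lambda}a\|_\infty\lesssim\lambda^{-m}\|\nabla^m a\|_\infty$. Hence this contribution is at most $(|\xi|/\lambda)^m A_m$, with no decay in $t$. This is the source of the non-decaying term $A_{m+i}$.

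\emph{Low part.} Here $|\zeta|\le|\xi|/2$, so $|\xi+\zeta|\ge|\xi|/2$. Taylor expanding $m_t$ in $\zeta$ to second order gives a first-order term $-2t(\xi\cdot\zeta)e^{-t|\xi|^2}$ and a remainder bounded by $(t|\zeta|^2+t^2|\xi|^2|\zeta|^2)e^{-t|\xi|^2/4}$. Using $t|\xi|^2e^{-t|\xi|^2/8}\lesssim1$, the first-order term is $e^{-t|\xi|^2}$ times the operator $2t\,\xi\cdot\nabla$. Its $L^\infty$ bound is $\lesssim t|\xi|\,\|\nabla P_{<\lambda}a\|\lesssim (\lambda/|\xi|)A_0\,e^{-t|\xi|^2/2}$. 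The second-order remainder gives $(\lambda/|\xi|)^2A_0\,e^{-t|\xi|^2/8}$. To justify these $L^\infty$ bounds I will view the remainder symbol, restricted to $|\zeta|\lesssim\lambda$, as a smooth symbol whose $\zeta$-derivatives scale like $\lambda^{-k}$. Its kernel is then an $L^1$-bounded rescaled Schwartz function (Mikhlin/Bernstein type), and by transference this works on $\mathbb T^d$ for $\xi\in\mathbb Z^d$.

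\emph{Optimization.} Choose $\lambda=|\xi|(A_m/A_0)^{1/m}$. Then $(\lambda/|\xi|)A_0=A_0^{1-1/m}A_m^{1/m}$ and $(\lambda/|\xi|)^2A_0=A_0^{1-2/m}A_m^{2/m}$, while the high part becomes $\lesssim A_0$. That is too large, so I will instead take $\lambda$ slightly smaller, or split the remainder more carefully: use a Taylor expansion of order $m-1$ in the low part, and absorb into $A_m$ the high part beyond $\lambda$. This yields exactly the form \eqref{commutator_inequality}. The hypothesis $m\ge 3+n$ is used to control the Taylor remainder after $n$ derivatives. If $\lambda>|\xi|/2$, then $A_m\gtrsim A_0$ and the estimate is trivial from the bound $\lesssim A_0$.

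Finally, \eqref{heat-decay-estimate} follows from the decomposition
\[
e^{t\Delta}(a\sin)=e^{-t|\xi|^2}a\sin+[e^{t\Delta},a]\sin,
\]
together with the bounds above and the inequality $A_0^{1-1/m}A_m^{1/m}\le A_0+A_m$.

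The main obstacle I expect is the low-part multiplier bound: showing that the Taylor remainder of $m_t(\zeta)$ on $|\zeta|\lesssim\lambda$ defines an operator with uniformly $L^1$ kernel that decays like $e^{-t|\xi|^2/4}$, uniformly in $t$. I would first try the physical-space representation $e^{t(\Delta+2i\xi\cdot\nabla)}a(x)=(e^{t\Delta}a)(x+2it\xi)$, formally as a complex translation. If this is awkward, I would fall back on explicit differentiation of the symbol combined with the bound $\sup_s s^k e^{-s}<\infty$.
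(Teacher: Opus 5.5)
Your set-up is sound. The following steps are all correct:
\begin{itemize}
\item the identity $[e^{t\Delta},a]e^{i\xi\cdot x}=e^{i\xi\cdot x}m_t(D)a$ with $m_t(\zeta)=e^{-t|\xi+\zeta|^2}-e^{-t|\xi|^2}$;
\item the reduction of $\nabla^n$ to the case $n=0$, with $(A_0,A_m)$ replaced by $(A_i,A_{m+i})$;
\item the high-frequency bound $\|m_t(D)P_{\ge\lambda}a\|_\infty\lesssim(|\xi|/\lambda)^mA_m$;
\item the deduction of \eqref{heat-decay-estimate} from \eqref{commutator_inequality}.
\end{itemize}
The gap is the optimization step, and it is structural, not cosmetic. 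You already had the low part bounded by $t|\xi|\,\|\nabla P_{<\lambda}a\|_\infty e^{-t|\xi|^2}$. The loss occurs when you replace $\|\nabla P_{<\lambda}a\|_\infty$ by $\lambda\|a\|_\infty$ via Bernstein. After that, your total bound has the form $rA_0e^{-ct|\xi|^2}+r^{-m}A_m$ with $r=\lambda/|\xi|\lesssim1$. The second term carries no decay in $t$, so it is admissible only if $r\gtrsim1$. But then the first term is just $A_0e^{-ct|\xi|^2}$, which is weaker than the claim when $A_m\ll A_0$. No choice of $\lambda$ repairs this. Taking $\lambda$ ``slightly smaller'' goes the wrong way, since it only inflates $(|\xi|/\lambda)^mA_m$. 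The fallback of a Taylor expansion of order $m-1$ is not an argument as written: as long as the low-frequency terms are bounded by powers of $\lambda$ times $\|a\|_\infty$, you face the same dichotomy.

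The missing idea has two parts: fix the cutoff at $\lambda=c|\xi|$, and bound the low part by derivatives of $a$ rather than by $\|a\|_\infty$. Then interpolate. Take $c=1/10$, with the cutoff supported on $|\zeta|\le|\xi|/5$, and write
\[
m_t(\zeta)=-2t\,(\xi\cdot\zeta)\,e^{-t|\xi|^2}+\sum_{|\beta|=2}\zeta^\beta r_\beta(\zeta),\qquad r_\beta(\zeta)=\frac{2}{\beta!}\int_0^1(1-s)\,(\partial^\beta m_t)(s\zeta)\,ds .
\]
Since $|\xi+s\zeta|\ge\frac45|\xi|$ there, $r_\beta$ times the cutoff is a symbol at scale $|\xi|$ of size $\lesssim|\xi|^{-2}e^{-t|\xi|^2/2}$, and each $\zeta$-derivative gains $|\xi|^{-1}$. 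Its kernel is therefore bounded in $L^1$, and periodizing passes this to $\mathbb T^d$. This gives $\|m_t(D)P_{<c|\xi|}a\|_\infty\lesssim(A_1+A_2)e^{-|\xi|^2t/4}$. Note that with your cutoff $|\xi|/2$, absorbing the factors $(t|\xi|^2)^k$ would only leave $e^{-t|\xi|^2/8}$, not the stated exponent. The high part is $\lesssim c^{-m}A_m$. Finally, apply the Landau--Kolmogorov inequality $\|\nabla^ja\|_\infty\lesssim\|a\|_\infty^{1-j/m}\|\nabla^ma\|_\infty^{j/m}$, which holds on $\mathbb T^d$ because periodic functions are bounded functions on $\mathbb R^d$. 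In the paper's notation this reads $A_j\lesssim A_0^{1-j/m}A_m^{j/m}$, and for $j=1,2$ it produces exactly the two interpolation terms in \eqref{commutator_inequality}; this is visibly where they come from. Alternatively, you can keep Bernstein but apply it dyadically. Each $P_Na$ with $N<c|\xi|$ contributes $\lesssim(N/|\xi|)\min\{A_0,(|\xi|/N)^mA_m\}e^{-|\xi|^2t/4}$, and the dyadic sum is $\lesssim A_0^{1-1/m}A_m^{1/m}e^{-|\xi|^2t/4}$. Either way, some form of this interpolation is the ingredient your proposal lacks. The paper itself does not prove the lemma; it imports it from \cite{CDP}.
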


\begin{lemma}\label{l:heat_stationary_phase}
Let $\xi\in\mathbb Z^d\setminus \{0\}$, $a\in C^\infty(\mathbb T^d; \mathbb R^d)$, and denote $A_{i,\kappa}\coloneqq|\xi|^{-i-\kappa}\|\nabla^ia\|_{C^\kappa}$ for $\kappa\in(0,1)$. Then for $n\geq 0$ and $m\geq 1$
\begin{align*}
\|\nabla^ne^{t\Delta}\mathcal R\mathbb P(ae^{i\xi\cdot x})\|_{C^\kappa}&\lesssim |\xi|^{n+\kappa}\left(|\xi|^{-1}\sum_{i=0}^{m-1}A_{i,\kappa}e^{-|\xi|^2t/4}+\sum_{i=m}^{n+2m}
A_{i,\kappa}\right).
\end{align*}
\end{lemma}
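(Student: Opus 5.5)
The plan is to treat $\mathcal R\mathbb P$ as a Fourier multiplier $m(\eta)$ that is smooth on $\mathbb R^d\setminus\{0\}$ and homogeneous of degree $-1$. Then $e^{t\Delta}\mathcal R\mathbb P$ has symbol $m_t(\eta)=e^{-t|\eta|^2}m(\eta)$. I would split the amplitude with Littlewood--Paley projections,
\[
a=a_{\rm lo}+a_{\rm hi},\qquad a_{\rm lo}\coloneqq P_{<|\xi|/4}a,
\]
and estimate the two resulting terms separately.

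\textbf{High-frequency amplitude.} Consider $a_{\rm hi}e^{i\xi\cdot x}$. Its Fourier support lies in the region $|\eta|\gtrsim|\xi|/4$, away from the origin. There the operator $\nabla^n e^{t\Delta}\mathcal R\mathbb P$, restricted to each dyadic block of size $N\gtrsim|\xi|$, has an $L^1$-bounded kernel of size $N^{n-1}$, uniformly in $t\ge0$. Bernstein's inequality gives $\|P_N a\|_{C^\kappa}\lesssim N^{-i}\|\nabla^i a\|_{C^\kappa}$. Summing over the dyadic blocks, with $i$ chosen between $m$ and $n+2m$ so that the sum converges, produces the non-decaying terms $|\xi|^{n+\kappa}\sum_{i=m}^{n+2m}A_{i,\kappa}$. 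The factor $|\xi|^{-1}$ coming from the multiplier is simply discarded here.

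\textbf{Low-frequency amplitude (stationary phase).} Now $a_{\rm lo}e^{i\xi\cdot x}$ has frequencies $\eta=\xi+\zeta$ with $|\zeta|\le|\xi|/2$, so $|\eta|\sim|\xi|$. I would Taylor expand the symbol in $\zeta$ to order $m$:
\[
m_t(\xi+\zeta)=\sum_{|\beta|<m}\frac{\partial^\beta m_t(\xi)}{\beta!}\zeta^\beta+r_{t,m}(\xi,\zeta).
\]
Each Taylor term acts as $\partial^\beta m_t(\xi)\,(D^\beta a_{\rm lo})\,e^{i\xi\cdot x}$. Using homogeneity together with the estimate $(t|\xi|^2)^je^{-t|\xi|^2}\lesssim e^{-t|\xi|^2/2}$, we get
\[
|\partial^\beta m_t(\xi)|\lesssim|\xi|^{-1-|\beta|}e^{-t|\xi|^2/2}.
\]
Applying $\nabla^n$ and taking the $C^\kappa$ norm then costs at most $|\xi|^{n+\kappa}$, after converting derivatives of $a$ into the quantities $A_{i,\kappa}$. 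This yields exactly the decaying term $|\xi|^{n+\kappa}|\xi|^{-1}\sum_{i<m}A_{i,\kappa}e^{-|\xi|^2t/4}$.

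For the remainder, I write $r_{t,m}$ in integral form, $\sum_{|\beta|=m}\zeta^\beta\int_0^1(\cdots)\partial^\beta m_t(\xi+s\zeta)\,ds$. The operator with symbol $\int_0^1(\cdots)\partial^\beta m_t(\xi+s\zeta)\,ds$ acts on $D^\beta a_{\rm lo}$, which has frequencies $|\zeta|\le|\xi|/2$. I need this operator to have an $L^1$-bounded kernel of size $|\xi|^{-1-m}$, uniformly in $t$ and $s$. Given that, the remainder contributes $|\xi|^{n+\kappa}A_{m,\kappa}$, which is part of the second sum.

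\textbf{Main obstacle.} The hardest step is the uniform kernel bound for the remainder. Its symbol is only smooth in $\zeta$ at scale $|\xi|$, and it must be checked that $\zeta$-derivatives cost $|\xi|^{-1}$ each, uniformly in $t$. This again follows from $(t|\xi|^2)^je^{-t|\xi|^2}\lesssim1$ on the annulus, after which a standard integration-by-parts kernel estimate applies. The Hölder $C^\kappa$ bookkeeping is routine if one uses the characterization $\|f\|_{C^\kappa}\sim\sup_N N^\kappa\|P_Nf\|_\infty$, since every operator above is frequency-localized at scale $\gtrsim|\xi|$.
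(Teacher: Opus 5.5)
The paper gives no proof of this lemma; it is quoted from \cite{CDP}. So there is no route to compare against, and your argument has to stand on its own.

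\textbf{What works.} Your architecture is sound: split $a$ at frequency $\sim|\xi|$, Taylor-expand $e^{-t|\eta|^2}m(\eta)$ about $\xi$ on the low part, and control the integral remainder by a symbol supported in $|\zeta|\lesssim|\xi|$ whose $\zeta$-derivatives cost $|\xi|^{-1}$ uniformly in $t,s$. The Taylor terms and the remainder are handled correctly.

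\textbf{First issue: the high-frequency step rests on a false claim.} The spectrum of $a_{\mathrm{hi}}e^{i\xi\cdot x}$ is $\{\xi+\zeta:|\zeta|\gtrsim|\xi|\}$, and this set is \emph{not} bounded away from the origin. Modes of $a$ with $\zeta\approx-\xi$ are shifted to frequencies near $0$. Consequences:
\begin{itemize}
\item For amplitude blocks $P_Na$ with $N\sim|\xi|$ there is no factor $|\xi|^{-1}$ to discard.
\item The claimed $L^1$ kernel bound $N^{n-1}$ fails. For $n=0$ the kernel is $O(1)$. For $n=1$ you face a zeroth-order Calder\'on--Zygmund operator truncated to a ball, whose kernel has $L^1$ norm $\sim\log N$.
\item Your closing remark that every operator is frequency-localized at scale $\gtrsim|\xi|$ is false for the same reason.
\end{itemize}
This turns out to be affordable, because the non-decaying sum you are aiming at carries no $|\xi|^{-1}$ gain. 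The clean fix is to localize the \emph{output}. The operator $P_M\nabla^ne^{t\Delta}\mathcal R\mathbb P$ has an $L^1$ kernel $\lesssim M^{n-1}$ uniformly in $t$. Using $\|f\|_{C^\kappa}\lesssim\sup_M M^\kappa\|P_Mf\|_{L^\infty}$, each block costs
$\lesssim\max(1,N^{n-1+\kappa})\|P_Na\|_{L^\infty}\lesssim\max(1,N^{n-1+\kappa})N^{-i-\kappa}\|\nabla^ia\|_{C^\kappa}$.
For $i\ge\max(m,n)$ this sums over $N\gtrsim|\xi|$ to $\lesssim|\xi|^{n-i}\|\nabla^ia\|_{C^\kappa}=|\xi|^{n+\kappa}A_{i,\kappa}$.

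\textbf{Second issue: the conversion into $A_{i,\kappa}$ cannot give the printed bound.} You claim the $\beta=0$ term yields ``exactly'' $|\xi|^{n+\kappa-1}A_{0,\kappa}e^{-|\xi|^2t/4}$. With the printed definition $A_{i,\kappa}=|\xi|^{-i-\kappa}\|\nabla^ia\|_{C^\kappa}$ this is false. For slowly varying $a$, the term $m_t(\xi)a_{\mathrm{lo}}e^{i\xi\cdot x}$ has $C^\kappa$ seminorm of order $|\xi|^{\kappa-1}e^{-t|\xi|^2}\|a\|_{L^\infty}$, which is not controlled by $|\xi|^{\kappa-1}A_{0,\kappa}=|\xi|^{-1}\|a\|_{C^\kappa}$.

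In fact the printed inequality is false. Take $a$ constant with $a\perp\xi$. The left side equals $c\,e^{i\xi\cdot x}$ with $|c|\sim|\xi|^{n-1}e^{-t|\xi|^2}|a|$, so its $C^\kappa$ seminorm is $\sim|\xi|^{n-1+\kappa}e^{-t|\xi|^2}|a|$. The right side behaves as follows:
\begin{itemize}
\item With the convention of Section 2, where $\|\cdot\|_{C^\kappa}$ is the H\"older seminorm, every $A_{i,\kappa}$ vanishes and the right side is $0$.
\item With the full H\"older norm, the right side is still smaller than the left by a factor $|\xi|^\kappa$ at $t=0$.
\end{itemize}
What your argument actually proves, and what is evidently intended, is the estimate with $A_{i,\kappa}$ replaced by $|\xi|^{-i}\|\nabla^ia\|_{L^\infty}+|\xi|^{-i-\kappa}\|\nabla^ia\|_{C^\kappa}$. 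You should state that normalization explicitly rather than claim the printed form.
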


\begin{lemma}[Oscillation estimate]\label{l:oscillation_estimate}
Let $a\in C^\infty(\mathbb T^d;\mathbb R^d)$ and $b\in C^\infty(\mathbb T^d;\mathbb R)$. Let $\lambda\in\mathbb N$ and $\Xi\in \mathbb Z^d$ with $|\Xi|\in\frac12\lambda\mathbb N$. Then, for any $n\geq0$, $m\geq1$, and $\kappa\in(0,1)$, we obtain 
\begin{align*}
&\Big\|\nabla^ne^{t\Delta}\mathcal R\mathbb P\Big(a(x)\Big(b(\lambda x)\sin^2(\Xi\cdot x)-\fint_{\mathbb T^d}b(\lambda y)\sin^2(\Xi\cdot y)dy\Big)\Big)\Big\|_{C^\kappa}\\
&\qquad\lesssim_{m,n,\kappa} \Bigg(\sum_{i=0}^{m-1}(\lambda^{n-i-1}+|\Xi|^{n-i-1})\|a\|_{C^{i,\kappa}}e^{-\lambda^2t/4}\\
&\qquad\qquad\qquad\qquad+\sum_{i=m}^{n+2m}
(\lambda^{n-i}+|\Xi|^{n-i})\|a\|_{C^{i,\kappa}}\Bigg)\|\nabla^{n+2d}b\|_{L^\infty}
\end{align*}
\end{lemma}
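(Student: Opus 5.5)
The plan is to reduce the oscillation estimate to the single-frequency bound of Lemma~\ref{l:heat_stationary_phase} by Fourier-expanding the oscillating factor, and then to sum over frequencies using the smoothness of $b$. The main obstacle is the lower bound on the nonzero frequencies in the second step; if that bound fails, the stated estimate cannot follow from Lemma~\ref{l:heat_stationary_phase} alone.

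\textbf{Step 1: Fourier expansion.} Write $b(\lambda x)=\sum_{k\in\mathbb Z^d}\hat b(k)e^{i\lambda k\cdot x}$ and $\sin^2(\Xi\cdot x)=\tfrac12-\tfrac14 e^{2i\Xi\cdot x}-\tfrac14 e^{-2i\Xi\cdot x}$. Then
\[
g(x)\coloneqq b(\lambda x)\sin^2(\Xi\cdot x)-\fint_{\mathbb T^d} b(\lambda y)\sin^2(\Xi\cdot y)\,dy
=\sum_{\zeta\neq 0} c_\zeta e^{i\zeta\cdot x},
\]
where each $\zeta$ has the form $\lambda k+\sigma\,2\Xi$ with $k\in\mathbb Z^d$ and $\sigma\in\{0,\pm1\}$, and $c_\zeta$ is a combination of at most three coefficients $\hat b(k)$ with weights $\tfrac12,-\tfrac14$. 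Subtracting the mean removes exactly the zero frequency.

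\textbf{Step 2: lower bound on the nonzero frequencies.} I would use the arithmetic hypothesis $|\Xi|\in\frac12\lambda\mathbb N$, combined with $\Xi\in\mathbb Z^d$ and the fact that in our applications $\Xi$ is a rational multiple of a lattice direction compatible with $\lambda$. The aim is to show that every nonzero $\zeta$ in the expansion satisfies
\[
|\zeta|\gtrsim \lambda\quad\text{if }\sigma=0,\qquad |\zeta|\gtrsim \min(\lambda,|\Xi|)\quad\text{if }\sigma=\pm1,
\]
together with the upper bound $|\zeta|\lesssim \lambda|k|+|\Xi|$. This guarantees that no low frequency is produced, except for the mean, which has been removed. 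This is the step I expect to be delicate. If the cancellation $\lambda k\approx \pm 2\Xi$ cannot be excluded in general, I would instead exploit that in every application $b=\varphi^2$ depends only on directions transverse to $\eta$, while $\Xi\parallel\eta$. In that case $\lambda k\perp\Xi$ whenever $\hat b(k)\neq0$, so $|\zeta|^2=\lambda^2|k|^2+4\sigma^2|\Xi|^2$, which gives the required lower bound directly.

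\textbf{Step 3: apply Lemma~\ref{l:heat_stationary_phase} to each mode.} For each $\zeta$, apply Lemma~\ref{l:heat_stationary_phase} to $a\,e^{i\zeta\cdot x}$ with the given $m$. Since $\|\nabla^i a\|_{C^\kappa}\le\|a\|_{C^{i,\kappa}}$, this yields terms of the form
\[
|\zeta|^{n-i-1}\|a\|_{C^{i,\kappa}}\,e^{-|\zeta|^2t/4}\ (i<m),\qquad |\zeta|^{n-i}\|a\|_{C^{i,\kappa}}\ (m\le i\le n+2m).
\]
By Step 2, $e^{-|\zeta|^2t/4}\le e^{-c\lambda^2t}$, which I would adjust to $e^{-\lambda^2t/4}$ by absorbing constants and, if needed, refining the constant in the heat-kernel lemma. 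Powers of $|\zeta|$ are bounded by $(1+|k|)^{|n-i|+1}(\lambda^{n-i}+|\Xi|^{n-i})$; for negative exponents the lower bound from Step 2 is used.

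\textbf{Step 4: sum over modes.} Integrating by parts $n+2d$ times in the definition of $\hat b(k)$ gives
\[
|\hat b(k)|\lesssim |k|^{-(n+2d)}\|\nabla^{n+2d}b\|_{L^\infty}\quad(k\neq 0).
\]
The $k=0$ contribution only produces the modes $\zeta=\pm2\Xi$, which are handled directly, and $|\hat b(0)|\le\|b\|_\infty$. To control $\|b\|_\infty$ by the top derivative, I would either assume $b$ has mean zero or note that in applications $\|b\|_\infty\lesssim\|\nabla^{n+2d}b\|_\infty$ on the relevant scale; otherwise the constant is absorbed. The polynomial loss $(1+|k|)^{n+1}$ from Step 3 is beaten by the decay $|k|^{-(n+2d)}$ with room $d+1$ to spare, so $\sum_k$ converges. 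Collecting the terms gives exactly the claimed bound, with the $\lambda$-powers coming from the $\sigma=0$ modes and the $|\Xi|$-powers from the $\sigma=\pm1$ modes.
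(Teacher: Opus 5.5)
The paper gives no proof of its own here (the lemma is imported from \cite{CDP}), so I am judging your argument directly. The architecture is the right one: Fourier-expand the oscillating factor, apply Lemma~\ref{l:heat_stationary_phase} mode by mode, and sum using $|\hat b(k)|\lesssim |k|^{-(n+2d)}\|\nabla^{n+2d}b\|_{L^\infty}$. But Step 2 is a genuine gap, and it cannot be closed from the stated hypotheses, because the estimate is false under them.

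Take $d=2$, $p\ge 2$, $\lambda=p^2+1$, $\Xi=(p^2-1,2p)$, so $\Xi\in\mathbb Z^2$ and $|\Xi|=\lambda$. Take $b(y)=\cos 2y_1$. The pair $k=(2,0)$, $\sigma=-1$ gives
\[
\zeta=2\lambda e_1-2\Xi=(4,-4p),\qquad |\zeta|=4\sqrt{\lambda}\ll\lambda ,
\]
with coefficient $-\frac18$ and no other contribution at that frequency. For constant $a\not\parallel\zeta$, $n=0$ and $t\le\lambda^{-1}$, this mode alone makes the left side $\gtrsim|\zeta|^{-1}\sim\lambda^{-1/2}$, while the right side is $O(\lambda^{-1})$. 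So the hypothesis $|\Xi|\in\frac12\lambda\mathbb N$ does not exclude near-cancellation $\lambda k\approx\pm2\Xi$.

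Your transversality fallback ($\eta\cdot\nabla b=0$, $\Xi\parallel\eta$) is correct and covers every use in the paper, but it proves a lemma with an added hypothesis. The simpler missing observation is arithmetic. In every application $\Xi=N_{j,k+1}\eta^j$ and $\lambda=M_{j,k+1}$, and by \eqref{N_definition}--\eqref{M_definition} the ratio $N_{j,k+1}/M_{j,k+1}$ is $m_*$ times a power of $A$, so $\Xi\in\lambda\mathbb Z^d$.

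Under $2\Xi\in\lambda\mathbb Z^d$, every frequency $\lambda k+2\sigma\Xi$ lies in $\lambda\mathbb Z^d$. The vanishing ones (including $k=\mp 2\Xi/\lambda$ with $\sigma=\pm1$) are exactly what the mean subtraction removes, and all others satisfy $|\zeta|\ge\lambda$ with constant one. That constant matters for your Step 3. A bound $|\zeta|\ge c\lambda$ with $c<1$ yields only $e^{-c^2\lambda^2t/4}$, which cannot be absorbed into $e^{-\lambda^2t/4}$ uniformly in $t$. Both the lattice condition and your transversality argument give $c=1$, so no such absorption is needed.

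The second gap is the $k=0$ term in Step 4. The modes $\pm2\Xi$ carry $\hat b(0)$, and nothing bounds $|\hat b(0)|$ by $\|\nabla^{n+2d}b\|_{L^\infty}$. For constant $b\neq0$ the left side equals $\frac{|b|}{2}\|\nabla^n e^{t\Delta}\mathcal R\mathbb P(a\cos(2\Xi\cdot x))\|_{C^\kappa}\neq 0$, while the right side vanishes. So "absorbing the constant" is not available. Assuming mean zero does not help either, since it excludes the actual case $b=\tilde\varphi^2\ge 0$.

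What is true, and all the paper uses since $b$ is a fixed profile there, is the estimate with $\|b\|_{W^{n+2d,\infty}}$ in place of $\|\nabla^{n+2d}b\|_{L^\infty}$. With the hypothesis strengthened to $\Xi\in\frac{\lambda}{2}\mathbb Z^d$ (or $\Xi\cdot\nabla b=0$) and the full norm on $b$, your Steps 1, 3 and 4 give a complete proof:
\begin{itemize}
\item powers $|\zeta|^{q}$ with $q<0$ are bounded by $\lambda^{q}$;
\item powers with $0\le q\le n$ are bounded by $(1+|k|)^{n}(\lambda^{q}+|\Xi|^{q})$;
\item the sum $\sum_{k\neq 0}(1+|k|)^{n}|k|^{-(n+2d)}$ converges.
\end{itemize}
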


\bigskip

\section*{Data Availability Statement}
This article does not contain any data sets generated or analyzed during the current study.

\section*{Conflict-of-Interest Statement}
The author declares that there is not any known competing financial interest or other type of interest that could have appeared to influence the work reported in this article.

\bigskip

\bibliographystyle{amsrefs}
\bibliography{NSE}

\end{document}